\newcommand{\llar}{-\kern-5pt-\kern-5pt\longrightarrow}
\newtheorem{Theorem}{Theorem}[section]
\newtheorem{thm}[Theorem]{Theorem}
\newtheorem{Lemma}[Theorem]{Lemma}
\newtheorem{Corollary}[Theorem]{Corollary}
\newtheorem{corollary}[Theorem]{Corollary}
\newtheorem{Proposition}[Theorem]{Proposition}
\newtheorem{conj}[Theorem]{Conjecture}
\newtheorem{Remark}[Theorem]{Remark}
\newtheorem{Example}[Theorem]{Example}
\newtheorem{Definition}[Theorem]{Definition}
\newtheorem{definition}[Theorem]{Definition}
\theoremstyle{definition}
\theoremstyle{plain}
\theoremstyle{definition}
\theoremstyle{remark}
\newcommand{\rmH}{\mathrm{H}}
\newcommand{\rmK}{\mathrm{K}}
\def\Ass{\mbox{\rm Ass}}
\def\Assh{\mbox{\rm Assh}}
\def\depth{\mbox{\rm depth}}
\def\ds{\displaystyle}
\def\Supp{\mbox{\rm Supp}}
\def\Hom{\mbox{\rm Hom}}
\def\im{\mbox{\rm Im}}
\def\l{\lambda}
\def\lar{\longrightarrow}
\def\m{\mathfrak{m}}
\def\n{\mathfrak{n}}
\def\p{\mathfrak{p}}
\def\q{\mathfrak{q}}
\def\QED{\hfill$\Box$}
\def\rar{\rightarrow}
\def\xx{{\bf x}}
\def\hh{{\bf h}}
\def\RR{{\bf R}}
\def\SS{{\bf S}}
\def\XX{{\bf X}}
\def\xx{{\bf x}}
\def\ff{{\bf f}}
\def\RR{{\bf R}}
\def\TT{{\bf T}}
\def\m{{\mathfrak m}}
\def\p{{\mathfrak p}}
\def\n{\mathfrak n}
\def\H{{\mathrm H}}
\def\Ass{\mbox{\rm Ass}}
\def\demo{\noindent{\bf Proof. }}
\def\QED{\hfill$\Box$}
\def\Deg{\mbox{\rm Deg}}
\def\hdeg{\mbox{\rm hdeg}}
\def\Hom{\mbox{\rm Hom}}
\def\Tor{\mbox{\rm Tor}}
\newcommand{\Rees}{\mbox{${\mathcal R}$}}
\def\gr{\mbox{\rm gr}}
\def\br{\mbox{\rm br}}
\def\bar#1{{\overline{#1}}}
\newcommand{\fkm}{\mathfrak{m}}
\begin{document}

\title{The Chern Numbers and Euler Characteristics of Modules}

\author{L. Ghezzi} \address{Department of Mathematics, New York City
College of Technology-Cuny, 300 Jay Street, Brooklyn, NY 11201, U. S.
A.} \email{lghezzi@citytech.cuny.edu}
\author{S. Goto}
\address{Department of Mathematics, School of Science and Technology,
Meiji University, 1-1-1 Higashi-mita, Tama-ku, Kawasaki 214-8571,
Japan} \email{goto@math.meiji.ac.jp}
\author{J. Hong}
\address{Department of Mathematics, Southern Connecticut State
University, 501 Crescent Street, New Haven, CT 06515-1533, U. S. A.}
\email{hongj2@southernct.edu}
\author{K. Ozeki} \address{Department
of Mathematics, School of Science and Technology, Meiji University,
1-1-1 Higashi-mita, Tama-ku, Kawasaki 214-8571, Japan}
\email{kozeki@math.meiji.ac.jp}
 \author{T.T. Phuong}
\address{Department of Information Technology and Applied
Mathematics, Ton Duc Thang University, 98 Ngo Tat To Street, Ward 19,
Binh Thanh District, Ho Chi Minh City, Vietnam}
\email{sugarphuong@gmail.com}
\author{W. V. Vasconcelos}
\address{Department of Mathematics, Rutgers University, 110
Frelinghuysen Rd, Piscataway, NJ 08854-8019, U. S. A.}
\email{vasconce@math.rutgers.edu}

\thanks{{AMS 2010 {\em Mathematics Subject Classification:} 13H10,
13H15, 13A30.}\\
\\The first author is partially supported by a grant
from the City University of New York PSC-CUNY Research Award
Program-44. The second author is partially supported by Grant-in-Aid
for Scientific Researches (C) in Japan (19540054).  The fourth author
is supported by a grant from MIMS (Meiji Institute for Advanced Study
of Mathematical Sciences).  The fifth author is supported by JSPS
Ronpaku (Dissertation of PhD) Program.  The last author is partially
supported by the NSF}

\begin{abstract}
 The set of the first Hilbert coefficients of parameter ideals relative to a
 module--its Chern coefficients--over a local Noetherian ring codes for considerable
 information about its structure--noteworthy
properties such as that of Cohen-Macaulayness, Buchsbaumness, and of having finitely
 generated local cohomology.
The authors have previously studied the ring case.
 By
developing a robust setting to treat these coefficients for
unmixed rings and modules, the case of modules is analyzed in a more transparent
manner. Another series of integers arise from partial Euler
characteristics and are shown to carry similar properties of the
module.
The technology of homological degree theory is also introduced in order to
derive bounds for these two sets of numbers.

\end{abstract}

\maketitle

 {\em \small Dedicated to Professors N. V. Trung and  G. Valla for their groundbreaking contributions to the theory of Hilbert functions.}

\section{Introduction}

\noindent Let $\RR$  be a Noetherian local ring with maximal ideal $\m$  and let $I$ be an $\mathfrak{m}$-primary ideal. There is a great deal of interest on the set of $I$-good filtrations of $\RR$. More concretely,  on the set of multiplicative, decreasing  filtrations
\[\mathcal{A}=\{ I_n \; \mid \;  I_0=\RR, \; I_{n+1}=I I_n, n\gg 0 \} \]
of $\RR$ ideals which are integral over the $I$-adic filtration, conveniently coded in the corresponding Rees algebra and its associated graded ring
\[ \Rees(\mathcal{A}) = \sum_{n\geq 0} I_nt^n, \quad \gr_{\mathcal{A}}(\RR) = \sum_{n\geq 0} I_n/I_{n+1}. \]

Our focus here is on a set of filtrations both broader and more narrowly
defined.   Let
$M$ be a finitely generated $\RR$-module.
 The Hilbert polynomial of the associated graded module \[
\gr_I(M)= \bigoplus_{n\geq 0} I^nM/I^{n+1}M,\] more precisely the
values of the length $\lambda(M/I^{n+1}M)$ of $M/I^{n+1}M$ for large $n$, can be assembled as \[
P_{M}(n)=\sum_{i=0}^{r} (-1)^i e_i(I,M) {{n+r-i}\choose{r-i}}, \] where
$r=\dim_{\RR} M > 0$.  In most of our discussion,  either $I$ or $M$ is fixed,
and by simplicity we set $e_i(I,M)=e_i(M)$ or $e_i(I,M)=e_i(I)$,
accordingly.  Occasionally the first Hilbert coefficient $e_1(I, M)$
is referred to as the {\em Chern coefficient} of $I$ relative to $M$
(\cite{chern1}).

\medskip

The authors have examined  (\cite{chern3}, \cite{chern2}, \cite{GO}, \cite{chern1})
how the values of $e_1(Q,\RR)$
 codes for  structural information about the ring
$\RR$ itself. More explicitly one defines the set
 \[\Lambda (M) = \{e_1(Q,M) \; \mid \; Q ~\operatorname{is ~a ~parameter
~ideal ~for} M \}
\] and examines what its structure expresses about $M$.
In case $M=\RR$, this set was analyzed for the following extremal  properties:
\begin{enumerate}
\item[{\rm (a)}] $0\in \Lambda(\RR)$;
\item[{\rm (b)}] $\Lambda(\RR)$ contains a single element;
\item[{\rm (c)}] $\Lambda(\RR)$ is bounded.
\end{enumerate}

\medskip

The task of determining the elements of $\Lambda(M)$ has turned out
to be rather daunting.
 More amenable has been the approach to obtain
specialized bounds using cohomological techniques.
An unresolved issue has been to describe the character of the set
$\Lambda(M)$, in particular the role of its extrema and the gap
structure of the set itself.

\medskip

The other invariant of the module $M$ in our investigation is the
following.
Let $Q = (x_1, x_2, \ldots, x_r)$ be a parameter ideal for $M$. We
denote by $\rmH_i(Q;M)$~($i \in \mathbb Z)$ the $i$--th homology module of
the Koszul complex $\rmK_{\bullet}(Q; M)$  generated
by the system $\mathbf{x} = \{x_1, x_2, \ldots, x_r\}$ of parameters of $M$. We put
\[\chi_1(Q;M) =  \sum_{i \ge 1}(-1)^{i-1}\lambda_\RR(\rmH_i(Q;M))\] and call it
the first {\em Euler characteristic} of $M$ relative to $Q$; hence
\[\chi_1(Q;M) = \lambda_\RR(M/QM) -
e_0(Q,M)\] by a classical result of Serre (see \cite{AB},
\cite{Serrebook}).

In analogy to $\Lambda (M)$,  one defines the set
 \[\Xi (M) = \{\chi_1(Q;M) \; \mid \; Q ~\operatorname{is ~a ~parameter
~ideal ~for} M \}
\] and examines again what its structure expresses about $M$.
Most of the properties of this set can be assembled from a diverse
literature, particularly from \cite[Appendice II]{Serrebook}.
 The outcome is a
listing that mirrors, step-by-step, all the properties of the set
$\Lambda(M)$ that we study.

\medskip

We shall now describe more precisely our results. Section 2 starts with  a review of some
elementary computation rules for $e_1(Q, M)$ under hyperplane sections,
more properly modulo superficial elements.
Since part of our goal is to extend to modules our previous results on rings,
 given the ubiquity of the unmixedness hypothesis,  we develop a fresh setting
to treat the module case.
 It made for more transparent proofs. These are carried out in
 Sections 3--5.

\medskip

Section 6 introduces homological degree techniques to obtain
special bounds for the set $\Lambda(M)$. The treatment here is more
general and sharper than in \cite{chern1}.
 Thus in
Corollary~\ref{lambdabarq} it is proved that the set
\[\Lambda_Q (M) = \{e_1(\q,M) : \q ~\operatorname{is ~a ~parameter
~ideal ~for} M ~\operatorname{with ~the ~same ~integral ~closure ~as ~that ~of} Q \},
\]is finite.
 In Section 7, we treat the sets $\Xi(M)$ and $\Xi_Q(M)$, focusing on the
properties that have analogs in $\Lambda(M)$ (see Table 1). In particular, we prove that Euler characteristics can be uniformly bounded by homological degrees (Theorem~\ref{chi1hdeg}). We also consider the numerical function, which we call the {\em Hilbert characteristic} of $M$ with respect to $Q=(\xx)$:
\[ \hh(\xx;M)= \sum_{i=0}^r (-1)^i e_i(Q, M).\]
If the system $\mathbf{x} = \{x_1, x_2, \ldots, x_r\}$ of parameters of $M$ forms a $d$--sequence for $M$,  $\hh(\xx;M)$ has some properties of a
homological degree. They are enough to bound the Betti numbers
$\beta_i(M)$ in terms of $\beta_i(\RR/\m)$, a well-known property of
cohomological degrees. Finally,
 in Section 8, we recast in the context of Buchsbaum-Rim
coefficients several questions treated in this paper.

\medskip

A street view of our results for the convenience of the reader is given in the following table.
 Let $\RR$ be a Noetherian
local ring with infinite residue class field and $M$ a finitely
generated $\RR$-module with $r = \dim_{\RR}M \ge 2$. Let $\mathcal{P}(M)$ be the
collection of systems $\xx=\{x_1, x_2, \ldots, x_r\}$ of parameters of $M$. In
\cite{chern3} and in this paper, the authors study multiplicity
derived numerical
functions
\[ \ff: \mathcal{P}(M) \lar \Bbb N \]
on emphasis on the nature of its range \[\XX_\ff(M)= \{\ff(\xx)\mid
\xx\in \mathcal{P}(M)\}.\]
For the two functions $e_1(\xx,M)$ and $\chi_1(\xx;M)$, more
properly $\ff_1(\xx)=-e_1(\xx,M) $,
and $\ff_2(\xx)=\chi_1(\xx;M)$, respectively,  identical assertions
about the character of $\XX_\ff(M)$ are expressed in the following grid:

\begin{center}
{\small
\begin{tabular}{|c|c|c|c|} \hline
\rule{0pt}{3.2ex}  $\XX_\ff(M)\subseteq [0,\infty )$  & $M$ &  \cite{MSV10} &
  \cite[Appendice II]{Serrebook}     \\ \hline
\rule{0pt}{3.2ex} $0\in \XX_\ff(M)$   & $M$ Cohen-Macaulay & Theorem~\ref{e1zeromod}$^*$    &
\cite[Appendice II]{Serrebook}
     \\ \hline
\rule{0pt}{3.2ex}   $|\XX_\ff(M)| < \infty$  &  $M$ generalized
Cohen-Macaulay   & Theorem~\ref{genCMchern3}$^*$  &
\cite{CST}       \\ \hline
\rule{0pt}{3.2ex}   $|\XX_\ff(M)|=1$ & $M$ Buchsbaum & 
Theorem~\ref{4.1}$^*$    &
\cite{SV2}    \\ \hline
\rule{0pt}{3.2ex}   $|\{\ff(\xx)\mid
\bar{Q}=\bar{(\xx)}\}| < \infty$  & $Q$  &
    Corollary~\ref{lambdabarq}      & Corollary~\ref{chi1intclos}  \\ \hline
\end{tabular}
}
\end{center}

\medskip

{\small TABLE 1. Properties of a finitely generated  module $M$ carried by the values of
either function. An adorned reference [XY]$^*$ requires that the
module $M$ be unmixed. The third and fourth columns  refer to the functions
$\ff_1(\xx)$ and $\ff_2(\xx)$ respectively.
}

\section{Preliminaries}

Throughout this section let $\RR$  be a Noetherian
local ring with maximal ideal $\m$ and let $M$ be a finitely generated $\RR$--module. For basic terminology and  properties of Noetherian rings and
Cohen--Macaulay rings and modules  we make use of \cite{BH} and
\cite{Mat0}.
For convenience of exposition we treat briefly the role of hyperplane
sections in Hilbert functions and examine unmixed  modules. We add
further clarifications when we define homological degrees.

\subsubsection*{Hyperplane sections and Hilbert polynomials.}

\medskip

We need  rules to compute  these coefficients. Typically they involve
so called {\em superficial elements} or {\em filter regular
elements}. We keep
the terminology of {\em generic hyperplane section},
even when  dealing with Samuel's multiplicity with respect to an $\m$--primary ideal
$I$ and its Hilbert coefficients $e_i(M) = e_i(I, M)$. Hopefully this usage will not lead
to undue
confusion.  We say that $h\in I$ is a {\em parameter} for $M$, if $\dim_{\RR}
M/hM< \dim_{\RR} M$.

\medskip

Let us begin with the following.

\begin{Lemma}\label{genhs2}
Let $(\RR,\m)$ be a Noetherian local ring, $I$ an $\m$--primary ideal of $\RR$,
and $M$ a finitely generated
$\RR$--module. Let $h\in I$ and suppose that $\lambda (0:_Mh) < \infty$.
Then we have the following.
\begin{enumerate}
\item[{\rm (a)}] $h$ is a parameter for $M$, if $\dim_{\RR} M > 0$.

\item[{\rm (b)}]
 $\lambda(0:_Mh)\leq \lambda(\H_{\m}^0(M/hM))$.

\item[{\rm (c)}]{\rm \cite[(1.5)]{RV}} If  $\dim_{\RR} M>1$ and $M/hM$ is Cohen--Macaulay, then $M$ is Cohen--Macaulay.
\end{enumerate}
\end{Lemma}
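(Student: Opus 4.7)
The plan is to treat (a) and (b) by elementary arguments—associated primes for (a) and a short chase in local cohomology for (b)—and then obtain (c) as an immediate corollary of the two.

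For (a), I would start from the observation that $\lambda(0:_M h) < \infty$ means $0:_M h$ is supported only at $\m$, and rephrase this as: $h$ lies outside every associated prime of $M$ distinct from $\m$. Indeed, if $\p \in \Ass(M) \setminus \{\m\}$ there is an embedding $\RR/\p \hookrightarrow M$, and if $h$ lay in $\p$ this embedding would factor through $0:_M h$, contradicting that $\RR/\p$ has infinite length. When $\dim_\RR M > 0$, every $\p \in \Assh(M)$ satisfies $\p \neq \m$, so $h$ avoids every $\p \in \Assh(M)$ and consequently $\dim_\RR M/hM < \dim_\RR M$.

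For (b), the key device is a double application of the local cohomology long exact sequence. I would split the four-term exact sequence
\[ 0 \to 0:_M h \to M \xrightarrow{h} M \to M/hM \to 0 \]
through the image $hM$ into
\[ 0 \to 0:_M h \to M \to hM \to 0, \qquad 0 \to hM \to M \to M/hM \to 0. \]
Because $0:_M h$ has finite length, $\H^0_\m(0:_M h) = 0:_M h$ and $\H^i_\m(0:_M h) = 0$ for $i \ge 1$, so applying $\H^\bullet_\m(-)$ to the first sequence yields
\[ 0 \to 0:_M h \to \H^0_\m(M) \to \H^0_\m(hM) \to 0, \]
and applying it to the second yields an injection $\H^0_\m(M)/\H^0_\m(hM) \hookrightarrow \H^0_\m(M/hM)$. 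Taking lengths and combining gives
\[ \lambda(0:_M h) = \lambda(\H^0_\m(M)) - \lambda(\H^0_\m(hM)) \le \lambda(\H^0_\m(M/hM)), \]
which is (b). The only subtlety is keeping straight which connecting maps are induced by multiplication by $h$ and which by the quotient map, but once the factorization through $hM$ is written down the computation is mechanical.

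Finally (c) drops out with no further work. By (a), $\dim_\RR M/hM = \dim_\RR M - 1 \ge 1$, so the Cohen–Macaulayness of $M/hM$ forces $\H^0_\m(M/hM) = 0$; then (b) yields $0:_M h = 0$, so $h$ is a non-zerodivisor on $M$, and the standard depth lemma applied to $0 \to M \xrightarrow{h} M \to M/hM \to 0$ gives $\depth M = \depth(M/hM) + 1 = \dim_\RR M$. Hence $M$ is Cohen–Macaulay. The main obstacle in the proof is the bookkeeping in part (b); everything else falls out directly.
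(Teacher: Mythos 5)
Your proof is correct and follows essentially the same route as the paper: part (a) by showing $h$ avoids every prime of $\Supp_{\RR}M$ of maximal coheight, part (b) from the local cohomology long exact sequence attached to $0 \to (0):_M h \to M \xrightarrow{\;h\;} M \to M/hM \to 0$ (you merely make the splitting through $hM$ explicit, whereas the paper writes the spliced sequence directly and even records the sharper identity $\lambda((0):_Mh) = \lambda(\H_\m^0(M/hM)) - \lambda((0):_{\H_\m^1(M)}h)$), and part (c) as an immediate consequence of (a) and (b). No gaps.
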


\begin{proof} Suppose that $\dim_{\RR} M > 0$ and let $\p \in \Supp_{\RR}M$ with $\dim \RR/\p = \dim_{\RR} M$. Then $$(0):_{M_\p}\frac{h}{1} = (0),$$ since $\p \ne \m$. As $\dim_{{\RR}_\p} M_\p = 0$, we get $h \not\in \p$. Hence $h$ is a parameter for $M$, if $\dim_{\RR} M > 0$.

We look at the exact sequence
$$
0 \to (0):_M h \to \rmH_\m^0(M) \overset{h}{\to} \rmH_\m^0(M)  \overset{\varphi}{\to}  \rmH_\m^0(M/hM)\to \rmH_\m^1(M) \overset{h}{\to} \rmH_\m^1(M) \to \rmH_\m^1(M/hM) \to \cdots
$$
of local cohomology modules derived from the exact sequence
$$
0 \to (0):_Mh \to M \overset{h}{\to} M \to M/hM \to 0
$$
of $\RR$--modules. We then have $$\lambda ((0):_Mh) = \lambda (\operatorname{Im} \varphi) = \lambda (\rmH_\m^0(M/hM)) - \lambda ((0):_{\rmH_\m^1(M)}h) \le \lambda (\rmH_\m^0(M/hM)).$$
Therefore, if $\dim_{\RR} M > 1$ and $M/hM$ is Cohen--Macaulay, then $h$ is $M$--regular and hence $M$ is Cohen--Macaulay as well.
 \end{proof}

 We will make repeated use of \cite[(22.6)]{Nagata} and   \cite[Section 3]{MSV10}. See also \cite{RV2} and \cite{RV} for a more general version of these results.

\begin{Proposition} \label{genhs} Let $(\RR,\m)$ be a Noetherian local ring, $I$ an $\m$--primary ideal of $\RR$, and $M$ a finitely generated
$\RR$--module with $r = \dim_{\RR} M > 0$.
Let $h \in I$ and assume that $h$ is superficial for $M$ with respect to $I$ $($in
particular
$h\in I\setminus \m I)$.
\begin{enumerate}
\item[{\rm (a)}]  The Hilbert coefficients of $M$ and $M/hM$ satisfy
\begin{eqnarray*} e_i(M)& =& e_i(M/hM) \ \ \text{for} \ \ 0 \le i< r-1 \ \ \text{and} \ \ \\
e_{r-1}(M) &=& e_{r-1}(M/hM) + (-1)^{r} \lambda(0:_Mh).
\end{eqnarray*}

\item[{\rm (b)}]
Let
$ 0 \rar A \rar B \rar C \rar 0$
be an exact sequence of finitely generated $\RR$--modules.
If $t=\dim_{\RR} A< s= \dim_{\RR} B$, then $e_i(B)=e_i(C)$ for $0 \le i< s-t$. In
particular, if $t=0$ and $s \geq 2$, then $e_1(B)=e_1(C)$.
\medskip

\item[{\rm (c)}]\label{e1dim1} If $M$ is a module of dimension $1$ and $I$ is a parameter ideal for $M$, then
\[ e_1(M)=-\lambda(\H_{\m}^0(M)). \]

\item[{\rm (d)}]\label{e1dim2} If $M$ is a module of dimension $2$ and $I$ is a parameter ideal for $M$, then
\[ e_1(M) = e_1(M/hM) + \lambda(0:_Mh) =  - \lambda(\H_{\m}^0(M/hM))+ \lambda(0:_Mh) = -\lambda((0):_{\rmH_\m^1(M)}h).  \]

\end{enumerate}
\end{Proposition}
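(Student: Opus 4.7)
My plan is to handle the four items in order, exploiting the fact that (a) and (c) feed directly into (d). For part (a), I would push the four-term sequence
$$0 \to (0:_M h) \to M \xrightarrow{h} M \to M/hM \to 0$$
through the $I$-adic filtration. By the classical superficial-element identity $I^{n+1}M :_M h = I^nM + (0:_M h)$ for $n\gg 0$ (and here $\lambda(0:_M h)<\infty$ because $h$ is a parameter for $M$ by Lemma~\ref{genhs2}(a) and $(0:_M h)$ is $\m$-torsion once $h$ is superficial), length-counting in the induced diagram yields
$$\lambda\bigl((M/hM)/I^{n+1}(M/hM)\bigr)=\lambda(M/I^{n+1}M)-\lambda(M/I^nM)+\lambda(0:_M h)$$
for $n\gg 0$, hence $P_{M/hM}(n)=P_M(n)-P_M(n-1)+\lambda(0:_M h)$. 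Using the Pascal identity $\binom{n+r-i}{r-i}-\binom{n-1+r-i}{r-i}=\binom{n-1+r-i}{r-1-i}$ and matching coefficients in the binomial basis of degree $r-1$ yields $e_i(M)=e_i(M/hM)$ for $i<r-1$ and the displayed formula for $e_{r-1}$.

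For part (b), length additivity on $0\to A\to B\to C\to 0$ combined with Artin--Rees gives
$$\lambda(B/I^{n+1}B)=\lambda\bigl(A/(A\cap I^{n+1}B)\bigr)+\lambda(C/I^{n+1}C),$$
and the first summand is a polynomial in $n$ of degree at most $\dim A=t$. Thus $P_B(n)-P_C(n)$ has degree $\le t<s$, and expanding both sides in the basis $\{\binom{n+s-i}{s-i}\}_{i=0}^s$ forces the top $s-t$ coefficients to agree, giving $e_i(B)=e_i(C)$ for $0\le i<s-t$.

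Parts (c) and (d) are then an assembly job. For (c), set $N=\H_{\m}^0(M)$ and $\bar M=M/N$. Since $\lambda(N)<\infty$ and $\depth\bar M\ge 1$, the module $\bar M$ is Cohen--Macaulay of dimension one, so for the parameter ideal $I$ one has $e_1(I,\bar M)=0$ and $P_{\bar M}(n)=e_0(M)(n+1)$ (with $e_0(M)=e_0(\bar M)$ coming from (b)). Artin--Rees plus the fact that $I^nN=0$ for $n\gg 0$ gives $P_M(n)=P_{\bar M}(n)+\lambda(N)$, and comparison with $P_M(n)=e_0(M)(n+1)-e_1(M)$ delivers $e_1(M)=-\lambda(N)$. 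For (d), apply (a) with $r=2$ to obtain $e_1(M)=e_1(M/hM)+\lambda(0:_M h)$; since $I$ restricts to a parameter ideal on the one-dimensional $M/hM$, (c) gives $e_1(M/hM)=-\lambda(\H_{\m}^0(M/hM))$; and the six-term local cohomology sequence already displayed in the proof of Lemma~\ref{genhs2} yields $\lambda(0:_M h)=\lambda(\H_{\m}^0(M/hM))-\lambda((0):_{\H_{\m}^1(M)}h)$, collapsing the three pieces to $-\lambda((0):_{\H_{\m}^1(M)}h)$.

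The only real technical hurdle is the superficial-element identity invoked in (a); the authors cite \cite[(22.6)]{Nagata} and \cite[Section 3]{MSV10} just above the statement precisely for this, so it can be taken as black-boxed. Everything else is bookkeeping in a fixed binomial basis, short-exact-sequence additivity, and the local cohomology sequence already spelled out in Lemma~\ref{genhs2}.
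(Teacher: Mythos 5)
Your proposal is correct and follows essentially the route the paper intends: parts (a)--(c) are the standard length-counting arguments behind the citations to Nagata and to \cite{MSV10} (the superficial-element identity $I^{n+1}M:_Mh=I^nM+(0:_Mh)$ for $n\gg0$, plus Artin--Rees and comparison of coefficients in the binomial basis), and your part (d) assembles (a), (c), and the local cohomology exact sequence exactly as in the proof of Lemma~\ref{genhs2}, which is all the paper itself offers for that assertion. The only loose phrasing is attributing $\lambda(0:_Mh)<\infty$ to Lemma~\ref{genhs2}(a) (which assumes that finiteness rather than proving it); the correct justification is that superficiality gives $(0:_Mh)\cap I^nM=(0)$ for $n\gg0$, so $(0:_Mh)$ embeds in $M/I^nM$ --- a standard point that does not affect the argument.
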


\begin{proof}
See Proof of Lemma \ref{genhs2} for assertion (d).
\end{proof}

The following Corollary was previously observed in \cite{MSV10}. By induction on $r=\dim_{\RR} M$, it also can be achieved independently, using Proposition~\ref{genhs}.

\begin{corollary}\label{e1par}
If $M$ is a module of positive dimension and $I$ is a parameter ideal for $M$, then $e_1(I, M)\leq 0$.
\end{corollary}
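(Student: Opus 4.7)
The plan is to argue by induction on $r = \dim_{\RR}M$, using Proposition~\ref{genhs} as the sole tool, exactly as the remark preceding the corollary suggests. After (harmlessly) replacing $\RR$ by $\RR(X)=\RR[X]_{\m\RR[X]}$ if necessary, we may assume the residue field is infinite, so superficial elements exist; this passage affects neither the hypothesis (parameter ideals, $M$) nor the conclusion (the Hilbert coefficients are preserved).

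\textbf{Base case $r=1$.} By Proposition~\ref{genhs}(c), $e_1(I,M) = -\lambda(\H_{\m}^0(M)) \le 0$, and we are done.

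\textbf{Inductive step $r\ge 2$.} Write $I=(x_1,\ldots ,x_r)$. Since the residue field is infinite, we may choose $h\in I\setminus \m I$ superficial for $M$ with respect to $I$ and such that $h$ forms part of a minimal generating set of $I$; then $\bar{I}=I/(h)$ is a parameter ideal for the $(r-1)$-dimensional module $M/hM$. Because $r\ge 2$, the associated primes of $M$ avoided by a superficial element force $\operatorname{Supp}(0:_M h)\subseteq\{\m\}$, so $\lambda(0:_M h)<\infty$ and Lemma~\ref{genhs2} and Proposition~\ref{genhs} both apply.

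If $r=2$, Proposition~\ref{genhs}(d) gives the identity
\[
e_1(I,M)=-\lambda\bigl((0):_{\H_{\m}^1(M)}h\bigr)\le 0,
\]
which is the desired conclusion. If $r\ge 3$, then $1<r-1$, so Proposition~\ref{genhs}(a) yields $e_1(I,M)=e_1(\bar{I},M/hM)$; since $\bar{I}$ is a parameter ideal for $M/hM$ and $\dim M/hM = r-1\ge 2>0$, the inductive hypothesis gives $e_1(\bar{I},M/hM)\le 0$, completing the induction.

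The only step requiring care is the selection of $h$: one must ensure simultaneously that $h$ is superficial (so Proposition~\ref{genhs}(a),(d) apply), that $\lambda(0:_M h)<\infty$ (which is automatic once $r\ge 2$, from the avoidance of non-maximal associated primes), and that $\bar{I}$ remains a parameter ideal for $M/hM$ (so the inductive hypothesis is applicable in dimension $r-1$). All three conditions can be arranged by a standard prime-avoidance argument on a minimal generating set of $I$, after the reduction to an infinite residue field.
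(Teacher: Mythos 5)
Your proof is correct and follows exactly the route the paper itself indicates (the paper only sketches it: ``by induction on $r=\dim_{\RR}M$ \ldots using Proposition~\ref{genhs}''), with the base case from Proposition~\ref{genhs}(c), the $r=2$ case handled via the identity $e_1(I,M)=-\lambda\bigl((0):_{\H_\m^1(M)}h\bigr)$ of Proposition~\ref{genhs}(d) — correctly avoiding the $+\lambda(0:_Mh)$ term that would block a naive induction at that step — and the reduction $e_1(I,M)=e_1(\bar I,M/hM)$ from Proposition~\ref{genhs}(a) for $r\ge 3$. Your attention to the choice of $h$ (superficial, finite length of $0:_Mh$, and $\bar I$ remaining a parameter ideal) fills in precisely the details the paper leaves implicit.
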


\medskip

\subsubsection*{Unmixed modules.}
We recall the notion of unmixed local rings and modules and develop a setting to
study their Hilbert coefficients.

\begin{Definition}{\rm Let $(\RR, \m)$ be a Noetherian local ring of
dimension $d$. Then we say that $\RR$ is {\em unmixed}, if $\dim \widehat{\RR}/\p=d$ for every $\p\in
\Ass \widehat{\RR}$, where $\widehat{\RR}$
is the
$\m$--adic completion of $\RR$. Similarly, let $M$ be a finitely  generated
$\RR$--module of dimension $r$.  Then we say that
$M$ is {\em unmixed}, if $\dim \widehat{\RR}/\p=r$ for every $\p\in
\Ass_{\widehat{\RR}}\widehat{M}$, where $\widehat{M}$ denotes the $\m$--adic completion of $M$.
}\end{Definition}

Our formulation of unmixedness is the following.

\begin{Theorem} \label{unmixedrep} Let $\RR$ be a Noetherian local ring and $M$ a finitely
generated $\RR$--module with $\dim_{\RR} M = \dim \RR$. Then the following conditions are equivalent{\rm\,:}
\begin{enumerate}
\item[{\rm (i)}] $M$ is an unmixed $\RR$--module{\rm \,;}
\item[{\rm (ii)}] There exists a surjective homomorphism $\SS \to \widehat{\RR}$ of rings together with an embedding $\widehat{M} \hookrightarrow \SS^n$ as an $\SS$--module for some $n > 0$, where $\SS$ is a Gorenstein local ring with $\dim \SS = \dim \RR$.
\end{enumerate}
\end{Theorem}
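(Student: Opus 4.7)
For $(2) \Rightarrow (1)$ I plan a short associated-prime chase. The $\SS$--linear injection $\widehat{M} \hookrightarrow \SS^n$ yields $\Ass_{\SS}(\widehat{M}) \subseteq \Ass_{\SS}(\SS^n) = \Ass_{\SS}(\SS)$; since $\SS$ is Gorenstein and hence equidimensional Cohen--Macaulay, $\Ass_{\SS}(\SS) = \Min(\SS)$ and every $\P \in \Min(\SS)$ satisfies $\dim \SS/\P = d$. The standard bijection $\Ass_{\widehat{\RR}}(\widehat{M}) \leftrightarrow \Ass_{\SS}(\widehat{M})$ induced by the surjection $\SS \twoheadrightarrow \widehat{\RR}$ preserves residue-ring dimensions, so $\dim \widehat{\RR}/\mathfrak{p} = d$ for every $\mathfrak{p} \in \Ass_{\widehat{\RR}}(\widehat{M})$; that is the unmixedness of $M$.

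For $(1) \Rightarrow (2)$ I would pass to the completions and invoke Cohen's structure theorem to choose a surjection $\rho \colon V \twoheadrightarrow \widehat{\RR}$ from a complete regular local ring $V$, of dimension $n$ say, and set $h = n - d$. Viewing $\widehat{M}$ as a $V$--module through $\rho$, unmixedness transfers: every $\mathfrak{q} \in \Ass_V(\widehat{M})$ satisfies $\dim V/\mathfrak{q} = d$, whence $\operatorname{ht} \mathfrak{q} = h$ since $V$ is Cohen--Macaulay. In particular every minimal prime of $\operatorname{Ann}_V(\widehat{M})$ has height exactly $h$, so $\grade(\operatorname{Ann}_V(\widehat{M}),V) = h$ and I can select a $V$--regular sequence $f_1, \ldots, f_h \in \operatorname{Ann}_V(\widehat{M})$. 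Setting $\SS = V/(f_1, \ldots, f_h)$ produces the required ambient ring: as a complete intersection quotient of a regular local ring it is Gorenstein of dimension $d$, the map $\rho$ factors through a surjection $\SS \twoheadrightarrow \widehat{\RR}$, and $\widehat{M}$ inherits the structure of an $\SS$--module.

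It remains to embed $\widehat{M}$ into a free $\SS$--module. The associated primes of $\widehat{M}$ over $V$ each contain $(f_1, \ldots, f_h)$ and have height $h$, so they descend to minimal primes of $\SS$, giving $\Ass_{\SS}(\widehat{M}) \subseteq \Min(\SS) = \Ass_{\SS}(\SS)$. Every non-zerodivisor of $\SS$ therefore acts injectively on $\widehat{M}$, so the canonical map $\widehat{M} \to K \otimes_{\SS} \widehat{M}$ into the total quotient ring $K$ of $\SS$ is injective. But $K \cong \prod_{\P \in \Min(\SS)} \SS_{\P}$ is a finite product of zero-dimensional Gorenstein local rings, hence Artinian and self-injective, so the finitely generated $K$--module $K \otimes_{\SS} \widehat{M}$ embeds into some $K^m$. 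Multiplying by a common non-zerodivisor of $\SS$ that sends the finitely many generators of $\widehat{M}$ into $\SS^m$ then yields an $\SS$--linear injection $\widehat{M} \hookrightarrow \SS^m$. I expect the delicate step to be the production of the regular sequence inside $\operatorname{Ann}_V(\widehat{M})$: it is precisely the unmixedness hypothesis, by ruling out embedded primes of lower codimension, that forces $\grade(\operatorname{Ann}_V(\widehat{M}),V)$ to attain the maximal value $\operatorname{ht} \operatorname{Ann}_V(\widehat{M}) = h$; everything downstream then follows formally.
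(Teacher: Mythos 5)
Your construction of the ambient Gorenstein ring $\SS$ has one genuine misstep. You choose the regular sequence $f_1,\ldots,f_h$ inside $\operatorname{Ann}_V(\widehat{M})$ and then assert that $\rho\colon V\twoheadrightarrow\widehat{\RR}$ factors through $\SS=V/(f_1,\ldots,f_h)$. That factorization requires $(f_1,\ldots,f_h)\subseteq\ker\rho$, and elements of $\operatorname{Ann}_V(\widehat{M})$ need not lie in $\ker\rho$: the annihilator only contains the kernel, it is not contained in it. Without the factorization you do not get the surjection $\SS\twoheadrightarrow\widehat{\RR}$ demanded by condition (2), nor the $\SS$--module structure on $\widehat{M}$ compatible with its $\widehat{\RR}$--structure. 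The repair is immediate: since $\dim_{\RR}M=\dim\RR$ gives $\dim V/\ker\rho=d$, the ideal $\ker\rho$ also has height (hence grade) $h=n-d$ in the regular ring $V$, so you may choose $f_1,\ldots,f_h$ as a $V$--regular sequence inside $\ker\rho\subseteq\operatorname{Ann}_V(\widehat{M})$; everything downstream, including the computation $\Ass_{\SS}(\widehat{M})\subseteq\Min(\SS)$, goes through verbatim. With that correction the proof is sound; note also that the hypothesis $\dim_{\RR}M=\dim\RR$ is what you are silently using here, not only unmixedness.

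For the record, the paper takes the existence of a Gorenstein $\SS\twoheadrightarrow\widehat{\RR}$ with $\dim\SS=\dim\RR$ as given by Cohen's structure theorem (your construction, done with $\ker\rho$, is exactly how one produces it) and then obtains the embedding differently: from $\Ass_{\SS}\widehat{M}\subseteq\Ass\SS$ and the reflexivity of $\widehat{M}_{\p}$ over the Artinian Gorenstein rings $\SS_{\p}$, $\p\in\Ass_{\SS}\widehat{M}$, it deduces that the biduality map $\widehat{M}\to\Hom_{\SS}(\Hom_{\SS}(\widehat{M},\SS),\SS)$ is injective, and the bidual embeds in $\SS^{n}$ by dualizing a free presentation of $\Hom_{\SS}(\widehat{M},\SS)$. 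Your route through the total quotient ring $K$ and the self--injectivity of its Artinian Gorenstein factors rests on the same local fact and costs about the same effort; the bidual argument has the small advantage of never leaving the category of finitely generated $\SS$--modules, while yours makes the role of the non-zerodivisors more explicit.
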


\begin{proof}
We have only to prove (i) $\Rightarrow$ (ii). We may assume $\RR$ is complete. Thanks to Cohen's structure theorem of complete local rings, we can choose a surjective homomorphism $\SS \to \RR$ of rings such that $\SS$ is a Gorenstein local ring with $\dim \SS = \dim \RR$. Then, because $\Ass_{\SS}M \subseteq \Ass \SS$ and the $\SS_\p$--module $M_\p$ is reflexive for all $\p \in \Ass_{\SS}M$, the canonical map
$$
M  \to \Hom_{\SS}(\Hom_{\SS}(M, \SS), \SS)
$$
is injective, while we get an embedding
$$
\Hom_{\SS}(\Hom_{\SS}(M, \SS), \SS) \hookrightarrow \SS^n
$$
for some $n > 0$, because $\Hom_{\SS}(M, \SS)$ is a finitely generated $\SS$--module. Hence the result.
\end{proof}

\medskip

\begin{Corollary}[\cite{GNa}]\label{fg} Let $(\RR, \m)$ be a Noetherian local ring and $M$ a finitely
generated $\RR$--module with $\dim_{\RR} M = \dim \RR \geq 2$. If $M$ is an unmixed
 $\RR$--module, then $\H_{\m}^1(M)$ is finitely generated.
\end{Corollary}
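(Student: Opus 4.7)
The plan is to reduce to the complete case and then apply Theorem~\ref{unmixedrep} to realize $\widehat{M}$ as a submodule of a free module over a Gorenstein ring of the same dimension, after which the result will follow from a standard local cohomology sequence argument.

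First I would reduce to the case where $\RR$ is complete. For the finitely generated $\RR$--module $M$, faithful flatness of $\widehat{\RR}$ over $\RR$ gives $\H_{\m}^1(M)\otimes_{\RR}\widehat{\RR} \cong \H_{\m\widehat{\RR}}^1(\widehat{M})$, and an $\m$--torsion module (which $\H_{\m}^1(M)$ always is, since $M$ is finitely generated) is finitely generated over $\RR$ if and only if its completion is finitely generated over $\widehat{\RR}$; indeed both conditions are equivalent to the vanishing of all but finitely many of its Artinian socles. So I may assume $\RR = \widehat{\RR}$.

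Next, by Theorem~\ref{unmixedrep}, I obtain a surjection $\SS \twoheadrightarrow \RR$ with $\SS$ a Gorenstein local ring of the same dimension $d = \dim \RR \geq 2$, and an embedding $M \hookrightarrow \SS^n$ of $\SS$--modules. Let $\n$ denote the maximal ideal of $\SS$; since the surjection $\SS \twoheadrightarrow \RR$ sends $\n$ onto $\m$, for any $\SS$--module $N$ viewed also as an $\RR$--module one has $\H_{\m}^i(N) = \H_{\n}^i(N)$. Setting $C = \SS^n/M$, the short exact sequence
$$
0 \to M \to \SS^n \to C \to 0
$$
yields the long exact sequence
$$
\H_{\n}^0(\SS^n) \to \H_{\n}^0(C) \to \H_{\n}^1(M) \to \H_{\n}^1(\SS^n).
$$
Because $\SS$ is Cohen--Macaulay (in fact Gorenstein) of dimension $d \geq 2$, one has $\depth \SS = d \geq 2$, so $\H_{\n}^0(\SS^n) = \H_{\n}^1(\SS^n) = 0$. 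Therefore
$$
\H_{\m}^1(M) = \H_{\n}^1(M) \cong \H_{\n}^0(C),
$$
which is the $\n$--torsion submodule of the finitely generated $\SS$--module $C$, and hence is itself finitely generated over $\SS$ (and so over $\RR$).

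The argument is largely formal once Theorem~\ref{unmixedrep} is available; the only real content is the embedding of $M$ into a free module over a Gorenstein ring of the same dimension. The mild technical point is the reduction to the complete case, which has to be checked because Theorem~\ref{unmixedrep} is stated for $\widehat{M}$; after that, the long exact sequence handles everything because the depth of $\SS$ is at least $2$.
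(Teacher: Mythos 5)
Your proof is correct and follows essentially the same route as the paper: reduce to the complete case, embed $M$ into $\SS^n$ via Theorem~\ref{unmixedrep}, and read off $\H_{\m}^1(M)\cong\H_{\n}^0(C)$ from the long exact sequence of local cohomology using $\depth\SS\ge 2$. The only difference is that you spell out the reduction to the complete case and the identification $\H_{\m}^i=\H_{\n}^i$, which the paper leaves implicit.
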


\begin{proof}
We may assume $\RR$ is complete. We maintain the notation in Proof of Theorem \ref{unmixedrep} and let $\n$ denote the maximal ideal of $\SS$. Then, applying the functors $\rmH_\n^i(*)$ to the exact sequence
$$
0 \to M \to \SS^n \to C \to 0
$$
of $\SS$--modules, we get $\rmH_\m^1(M) \cong \rmH_\n^0(C)$, because $\depth \SS \ge 2$. Hence $\rmH_\m^1(M)$ is finitely generated.
\end{proof}

\medskip

\section{Vanishing of $e_1(Q,M)$}

Let $\RR$ be a Noetherian local ring with maximal ideal $\m$ and $M$ a finitely
generated $\RR$--module with $r= \dim_{\RR} M$. Recall that a parameter ideal for $M$ is an ideal $Q=(x_1, x_2, \ldots, x_r) \subseteq \m$ in $\RR$ with $\lambda(M/QM)< \infty$.

\medskip

\begin{Theorem}\label{e1zeromod}
Let $\RR$ be a Noetherian local ring  and $M$
a finitely generated $\RR$--module with $\dim_{\RR} M \geq 2$. Suppose that $M$ is unmixed and let $Q$ be a parameter ideal for $M$. Then the following conditions are equivalent{\rm \,:}
\begin{enumerate}
\item [{\rm (i)}] $M$ is a Cohen--Macaulay $\RR$--module{\rm \,;}
\item[{\rm (ii)}] $e_1(Q, M)=0$.
\end{enumerate}
\end{Theorem}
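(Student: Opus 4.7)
The implication $(1) \Rightarrow (2)$ is standard: when $M$ is Cohen--Macaulay and $Q=(x_1,\dots,x_r)$ is a parameter ideal, the sequence $x_1,\dots,x_r$ is $M$-regular, so $\gr_Q(M) \cong (M/QM)[T_1,\dots,T_r]$ and $\lambda(M/Q^{n+1}M) = \lambda(M/QM)\binom{n+r}{r}$, which forces $e_i(Q,M)=0$ for every $i\geq 1$.

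For $(2)\Rightarrow(1)$ my plan is to induct on $r=\dim_\RR M\geq 2$. Completion preserves unmixedness, parameter ideals, Hilbert coefficients, and Cohen--Macaulayness, so I may assume $\RR$ is complete; Theorem \ref{unmixedrep} then provides a Gorenstein local ring $\SS$ of dimension $r$ with an embedding $M \hookrightarrow \SS^n$ of $\SS$-modules.

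For the base case $r=2$, pick $h \in Q$ superficial for $M$ with respect to $Q$. Unmixedness gives $\H_\m^0(M)=0$, and since $(0):_M h$ has finite length for any superficial $h$, this submodule lies inside $\H_\m^0(M)=0$, making $h$ an $M$-regular element. Corollary \ref{fg} furnishes finite generation (hence finite length) of $\H_\m^1(M)$, and Proposition \ref{genhs}(d) yields
\[ 0 \;=\; e_1(Q,M) \;=\; -\lambda\bigl((0):_{\H_\m^1(M)} h\bigr), \]
so $h$ is a nonzerodivisor on $\H_\m^1(M)$. Because $\H_\m^1(M)$ is $\m$-power torsion and $h\in\m$, multiplication by $h$ is also nilpotent on it; both conditions can hold only if $\H_\m^1(M)=0$, and so $\depth_\RR M \geq 2 = \dim_\RR M$ and $M$ is Cohen--Macaulay.

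For the inductive step $r\geq 3$, I would choose $h \in Q$ superficial for $M$ and $\SS$-regular, chosen further so that $h$ is regular on $C := \SS^n/M$ modulo its finite-length subobject $\H_\m^0(C) \cong \H_\m^1(M)$ (whose finite generation is guaranteed by Corollary \ref{fg}, enabling prime avoidance against the finitely many associated primes of $C$ distinct from $\m$). The snake lemma applied to multiplication by $h$ on $0\to M \to \SS^n \to C \to 0$ then yields, after discarding a finite-length kernel, an embedding of a quotient of $M/hM$ into the free module $(\SS/h\SS)^n$ over the Gorenstein ring $\SS/h\SS$ of dimension $r-1$. The reverse direction of Theorem \ref{unmixedrep} makes this quotient unmixed of dimension $r-1$, and Propositions \ref{genhs}(a) and (b) show that its $e_1$ still vanishes; the inductive hypothesis gives Cohen--Macaulayness of the quotient, and Lemma \ref{genhs2}(c) then lifts this back to $M$. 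The main obstacle lies in this inductive step, specifically in arranging the reduction modulo $h$ so that the finite-length obstructions from the snake lemma do not spoil either unmixedness or the applicability of Lemma \ref{genhs2}(c); the finiteness of $\H_\m^1(M)$ from Corollary \ref{fg} is the crucial enabler throughout.
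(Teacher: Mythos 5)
Your direction $(1)\Rightarrow(2)$ and your base case $r=2$ are correct, and your overall strategy for $(2)\Rightarrow(1)$ --- complete, embed $M$ into $\SS^n$ via Theorem \ref{unmixedrep}, cut by a superficial element $h$, and induct on dimension --- is exactly the paper's. But the final step of your inductive step has a genuine gap, and it is precisely the obstacle you flagged without resolving. What the induction hypothesis gives you is that $L$, the image of $M/hM$ in $(\SS/h\SS)^n$, is Cohen--Macaulay; here $L=(M/hM)/[(0):_Ch]$ with $C=\SS^n/M$ and $(0):_Ch$ of finite length. Lemma \ref{genhs2}(c) requires that $M/hM$ itself be Cohen--Macaulay, and this does not follow from the Cohen--Macaulayness of $L$: one still has $\H_\m^0(M/hM)\cong (0):_Ch\cong (0):_{\H_\m^1(M)}h$, which is nonzero whenever $\H_\m^1(M)\neq (0)$ --- and $\H_\m^1(M)=(0)$ is essentially what you are trying to prove. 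No choice of $h$ can force $(0):_Ch=(0)$ a priori, since $\H_\m^0(C)\cong\H_\m^1(M)$ makes every element of $\m$ a zerodivisor on $C$ unless that module already vanishes.

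The missing argument, which is where Corollary \ref{fg} does its real work, runs as follows. Since $L$ is Cohen--Macaulay of dimension $r-1\ge 2$ and differs from $M/hM$ by a finite-length submodule, $\H_\m^1(M/hM)\cong\H_\m^1(L)=(0)$. Feeding this into the long exact sequence of local cohomology attached to $0\to M\overset{h}{\to}M\to M/hM\to 0$ gives $\H_\m^1(M)=h\H_\m^1(M)$; as $\H_\m^1(M)$ is finitely generated by Corollary \ref{fg}, Nakayama's lemma yields $\H_\m^1(M)=(0)$. The same long exact sequence (using $\H_\m^0(M)=(0)$ by unmixedness) then embeds $\H_\m^0(M/hM)$ into $(0):_{\H_\m^1(M)}h=(0)$, so $M/hM$ is Cohen--Macaulay after all and the lift to $M$ via Lemma \ref{genhs2}(c) (or simply the $M$-regularity of $h$) goes through. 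You invoked Corollary \ref{fg} only as a prime-avoidance device for choosing $h$; its essential role in the induction is this Nakayama step.
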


\begin{proof} We set $e_1(Q)=e_1(Q,M)$.
  It is enough to
show that if $M$ is not
Cohen--Macaulay, then $e_1(Q) <0$.  We may assume that $\RR$ is
complete with an infinite residue
field and $\dim \RR=\dim M $.

\medskip

Choose a Gorenstein  local ring $(\SS,\n)$ and a surjection $\SS\rar \RR$,
with $\dim \SS=\dim \RR$.
If $Q$ is a parameter ideal of $\RR$, there exists a parameter
ideal $\q$ of $\SS$ such that $\q \RR =Q$ (\cite[Lemma 3.1]{chern2}).
Therefore the associated graded module  of $Q$  relative to $M$ is isomorphic to the
associated graded module of $\q$ with respect to the $\SS$--module $M$:
\[ \gr_Q(M) \simeq \gr_{\q}(M), \] which implies that \[ e_1(Q) =
e_1(\q, M), \] where $e_1(\q, M)$ denotes the first Hilbert
coefficient of $\q$ with respect to the $\SS$--module $M$.

\bigskip

Consider the exact sequence of $\SS$--modules obtained from
Proposition~\ref{unmixedrep}: \[ 0 \rar M \rar \SS^n
\rar C \rar 0.  \] Let $y$ be a superficial element for $\q$ with
respect to $M$ such that $y$ is part of a minimal generating set of
$\q$. We may assume that $y$ is a nonzero divisor on $M$.
By tensoring
the exact sequence of $\SS$--modules with $\SS/(y)$, we get \[ 0 \rar T
=\Tor_1^{\SS}(\SS/(y), C) \rar M/yM \stackrel{\zeta}{\rar}
\SS^n/y \SS^n \rar
C/yC \rar 0.  \] Let $M\,'= M/yM$ and $N=\im(\zeta)$ and consider
the short exact sequence: \[ 0 \rar T \rar M\,' \rar N \rar 0. \]
Then either $T=0$ or $T$ has finite length $\l(T) < \infty$. Note
that $N$ is an unmixed $\SS/(y)$-module.

\bigskip

\noindent We use induction on $d=\dim M$ to show that if $M$ is
not Cohen--Macaulay, then $e_1(\q, M) <0$.

\bigskip

\noindent Let $d=2$ and $\q=(y, z)$. Then $T \neq 0$ so that $\l(T) < \infty$.
Applying the Snake Lemma to \[
\begin{CD}
 0 @>>> T \cap z^n M\,' @>>> z^n M\,' @>>> z^n N @>>> 0 \\
  & & @VVV  @VVV  @VVV \\
0 @>>> T @>>> M\,' @>>> N @>>> 0
\end{CD}
\] we get, for sufficiently large $n$,
\[  \l(M\,'/z^n M\,') = \l(T) + \l(N/z^n N).  \]
Computing the Hilbert polynomials, we have
\[ e_1(\q, M) = e_1(\q/(y), M/yM) = -\l(T) <0. \]

\bigskip

\noindent Now suppose that $d \geq 3$. From the exact sequence \[ 0 \rar T \rar
M'=M/yM \rar N \rar 0, \] we have \[ e_1(\q, M) = e_1(\q/(y) ,
M/yM ) = e_1(\q/(y) , N).  \] By an induction argument, it is
enough to show that $N$ is not Cohen--Macaulay since
$\dim({\SS}/(y))=d-1$.

\bigskip

\noindent Suppose that $N$ is Cohen--Macaulay. Let $\n$ be the maximal ideal
of $\SS/y\SS$. From the exact sequence \[ 0 \rar T \rar M'=M/yM \rar
N \rar 0, \] we obtain the long exact sequence: \[ 0 \rar
\H_{\n}^0(T) \rar \H_{\n}^0(M\,') \rar \H_{\n}^0(N) \rar \H_{\n}^1(T)
\rar \H_{\n}^1(M\,') \rar \H_{\n}^1(N).  \]By the assumption that
$N$ is Cohen--Macaulay of dimension $d-1 \geq 2$ and the fact that
$T$ is a torsion module, we get

\[
0 \rar T \simeq \H_{\n}^0(M\,') \rar 0 \rar 0 \rar \H_{\n}^1(M\,') \rar 0.
\]

\noindent From the
exact sequence \[0 \rar M \stackrel{\cdot y}{\rar} M \rar M'= M/yM \rar
0, \] we obtain the following exact sequence: \[ 0 \rar {T \simeq
\H_{\n}^0(M\,')} \rar \H_{\n}^1(M) \stackrel{\cdot y}{\rar} \H_{\n}^1(M)
\rar {\H_{\n}^1(M\,')=0}.  \] Since $\H_{\n}^1(M) $ is finitely
generated by Corollary \ref{fg} and ${\ds \H_{\n}^1(M)=y\H_{\n}^1(M)} $, we have ${\ds
\H_{\n}^1(M)=0}$. This means that $T=0$. Therefore \[ {\ds 0 \rar T=0
\rar M/yM \simeq N \rar 0.} \] Since $N$ is
Cohen--Macaulay, $M/yM$ is Cohen--Macaulay.  Since $y$ is
regular on $M$, $M$ is Cohen--Macaulay, which is a contradiction. \end{proof}

\bigskip

\begin{Example} {\rm (\cite{chern1}). Let  $M=\RR=k[[x,y,z]]/(z(x,y,z))$.
Then $H^0_{\m}(\RR)=(z)$, and $\SS=\RR/H^0_{\m}(\RR)\simeq k[[x,y]]$ is Cohen-Macaulay.
If $Q$ is a parameter ideal of $\RR$, then $e_1(Q,\RR)=e_1(Q\SS,\SS)=0$.
Hence $e_1(Q,\RR)=0$, but $\RR$ is not Cohen-Macaulay. Therefore the unmixdness condition is necessary Theorem \ref{e1zeromod}. }
\end{Example}

\bigskip

Let us list some consequences of Theorem \ref{e1zeromod}.
Let $\RR$ be a Noetherian local ring and $M$ a finitely generated
$\RR$-module. We put
\[ \Assh_{\RR} M = \{ \p \in \Ass_{\RR} M \; \mid \; \dim \RR/\p=\dim_{\RR} M\}.\]
Let ${ (0_M) = \bigcap_{\p \in \Ass_{\RR} M} M(\p) }$ be a primary
decomposition of $0_M$ in $M$, where $M(\p)$ is a $\p$--primary submodule of $M$ for each $\p \in \Ass_{\RR}M$.  We put \[ \mathrm{U}_M(0) = \bigcap_{\p \in
\Assh_{\RR} M} M(\p) \]
and call it the {\em unmixed component} of $M$. We then have the following.

\medskip

\begin{Lemma}\label{comparison}
Let $\RR$ be a Noetherian local ring and $M$  a finitely
generated $\RR$--module with $r = \dim_{\RR}M > 0$. Let $Q$ be a parameter
ideal for $M$. Let $U=\mathrm{U}_M(0)$ and suppose that $U \neq (0)$.  We put $N = M/U$. Then
the following assertions hold.
\begin{enumerate}
\item[{\rm (a)}] $\dim_{\RR} U < \dim_{\RR} M.$
\item[{\rm (b)}]
\[
e_1(Q,M) = \left\{\begin{array}{ll}
e_1(Q,N) &\quad \mbox{\rm if} \;\; \dim_{\RR} U \leq r-2, \\& \\
e_1(Q,N)-s_0 &\quad \mbox{\rm if} \;\; \dim_{\RR} U=r-1,
\end{array}
\right.
\]where $s_0 \geq 1$ denotes the multiplicity of  the graded $\gr_Q(\RR)$--module ${ \bigoplus_{n \ge 0} U/(Q^{n+1}M\cap U)}.$

\item[{\rm (c)}] $e_1(Q,M) \leq e_1(Q,N)$ and the equality $e_1(Q,M) = e_1(Q,N)$ holds if and only if $\dim_{\RR} U \leq r-2$.
\end{enumerate}
\end{Lemma}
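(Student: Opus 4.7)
\begin{pf}[Proof proposal for Lemma~\ref{comparison}]
The plan is to attack the three assertions in order, reducing (c) to (b) and (b) to a short exact sequence argument on Hilbert--Samuel functions.

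For (a), I would recall that $(0) = \bigcap_{\p \in \Ass_\RR M} M(\p)$ and observe that the natural map
$$
U \longrightarrow \prod_{\p \in \Ass_\RR M \setminus \Assh_\RR M} M/M(\p)
$$
is injective, since its kernel together with $\bigcap_{\p \in \Assh_\RR M} M(\p) = U$ gives $\bigcap_{\p \in \Ass_\RR M} M(\p) = 0$. Because $\Ass_\RR(M/M(\p)) = \{\p\}$, this forces $\Ass_\RR U \subseteq \Ass_\RR M \setminus \Assh_\RR M$, and every such $\p$ has $\dim \RR/\p < r$. Hence $\dim_\RR U < r$.

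For (b), I would start from the short exact sequence
$$
0 \to U/(Q^{n+1}M \cap U) \to M/Q^{n+1}M \to N/Q^{n+1}N \to 0,
$$
so that $\lambda(M/Q^{n+1}M) - \lambda(N/Q^{n+1}N) = \lambda\bigl(U/(Q^{n+1}M \cap U)\bigr)$ for all $n \ge 0$. By Artin--Rees, $Q^{n+1}U \subseteq Q^{n+1}M \cap U \subseteq Q^{n-c}U$ for some fixed $c$ and all $n \gg 0$, so the right-hand side is a polynomial in $n$ of degree exactly $\dim_\RR U$ for $n \gg 0$, with the same leading term as $\lambda(U/Q^{n+1}U)$. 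If $\dim_\RR U \le r-2$, this polynomial has degree $\le r-2$ and therefore contributes nothing to the coefficient of $\binom{n+r-1}{r-1}$, giving $e_1(Q,M) = e_1(Q,N)$. If $\dim_\RR U = r-1$, write the Hilbert polynomial of the graded $\gr_Q(\RR)$--module ${\bigoplus_{n\ge 0} U/(Q^{n+1}M \cap U)}$ as $s_0\binom{n+r-1}{r-1} - s_1\binom{n+r-2}{r-2} + \cdots$ with $s_0 \ge 1$, and compare the coefficients of $\binom{n+r-1}{r-1}$ on both sides of $P_M(n) = P_N(n) + P_U(n)$ to read off $-e_1(Q,M) = -e_1(Q,N) + s_0$, i.e. $e_1(Q,M) = e_1(Q,N) - s_0$.

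Assertion (c) then falls out immediately from (b), since $s_0 \ge 1$ in the second case. The step I expect to be the most delicate is the identification of the leading coefficient of $\lambda\bigl(U/(Q^{n+1}M \cap U)\bigr)$ with the multiplicity $s_0$ of the graded $\gr_Q(\RR)$--module in the statement; the Artin--Rees sandwich above is what makes this identification legitimate and also guarantees $s_0 \ge 1$ exactly when $\dim_\RR U = r-1$.
\end{pf}
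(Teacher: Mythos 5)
Your proposal is correct and follows essentially the same route as the paper: the short exact sequence $0 \to U \to M \to N \to 0$, additivity of the Hilbert--Samuel functions, and comparison of the coefficients of $\binom{n+r-1}{r-1}$ according to whether $\dim_\RR U \le r-2$ or $\dim_\RR U = r-1$. Your Artin--Rees sandwich merely makes explicit a point the paper leaves implicit (that $\bigoplus_{n\ge 0} U/(Q^{n+1}M\cap U)$ has dimension $\dim_\RR U$ and multiplicity $e_0(Q,U) \ge 1$), and your argument for (a) via $\Ass_\RR U \subseteq \Ass_\RR M \setminus \Assh_\RR M$ is an equally standard variant of the paper's one-line localization argument.
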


\medskip

\begin{proof} (a) This is clear, since $\mathrm{U}_\p = (0)$ for all $\p \in \Assh_{\RR}M$.

\medskip

\noindent (b) We write
\[\l_{\RR}(U/(Q^{n+1}M\cap U))
 = s_0\binom{n + t }{t} -s_1\binom{n + t-1}{t-1} + \cdots +
 (-1)^ts_t\]
for $n \gg 0$ with integers $\{s_i\}_{0 \le i \le t}$, where $t=\dim_{\RR}
U$.  Then the claim follows from the exact sequence $0 \rar U \rar M
\rar N \rar 0$ of $\RR$--modules, which gives \[ \l_{\RR}(M/Q^{n+1}M) =
\l_{\RR}(N/Q^{n+1}N) + \l_{\RR}(U/(Q^{n+1}M\cap U)), \;\; \forall \; n \ge 0. \]

\medskip

\noindent (c) This follows from (b) and the fact that $s_0 \ge 1$.
\end{proof}

\medskip

\begin{Theorem}\label{1.2-2}
Let $\RR$ be a Noetherian local ring and $M$ a finitely
generated $\RR$--module with $r=\dim_{\RR}M \geq 2$. Suppose that $\RR$ is a
homomorphic image of a Cohen--Macaulay ring. Let $U=\mathrm{U}_M(0)$ and let
$Q$ be a parameter ideal for $M$.  Then the following conditions are
equivalent{\rm \,:}
\begin{enumerate}
\item[{\rm (i)}] $e_1(Q,M)=0${\rm \,;}
\item[{\rm (ii)}] $M/U$ is a Cohen--Macaulay $\RR$--module and $\dim_{\RR} U \leq r-2$.
\end{enumerate}
\end{Theorem}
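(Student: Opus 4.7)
The plan is to reduce the theorem to Theorem \ref{e1zeromod} applied to the quotient $N = M/U$, using Lemma \ref{comparison} as the bridge between $e_1(Q,M)$ and $e_1(Q,N)$.

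For the implication (2) $\Rightarrow$ (1), the assumption $\dim_\RR U \leq r-2$ places us in the first case of Lemma \ref{comparison}(b), yielding $e_1(Q,M) = e_1(Q,N)$. Since any Cohen--Macaulay module over a local ring is unmixed, Theorem \ref{e1zeromod} applied to $N$ gives $e_1(Q,N) = 0$, hence $e_1(Q,M) = 0$. (When $U = (0)$ we have $M=N$ and the same conclusion follows by applying Theorem \ref{e1zeromod} directly to $M$.)

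For the main direction (1) $\Rightarrow$ (2), the first step is to exclude the case $\dim_\RR U = r-1$. If this were to hold, Lemma \ref{comparison}(b) would give $e_1(Q,M) = e_1(Q,N) - s_0$ with $s_0 \geq 1$, while Corollary \ref{e1par} applied to the positive-dimensional module $N$ forces $e_1(Q,N) \leq 0$; together these contradict $e_1(Q,M) = 0$. Hence $\dim_\RR U \leq r-2$, and another application of Lemma \ref{comparison}(b) then yields $e_1(Q,N) = e_1(Q,M) = 0$. It will then suffice, by Theorem \ref{e1zeromod}, to show that $N$ is unmixed as an $\RR$-module, and that theorem will deliver the Cohen--Macaulayness of $N$.

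The main obstacle, and the place where the hypothesis that $\RR$ is a homomorphic image of a Cohen--Macaulay ring becomes essential, is precisely this verification of unmixedness of $N$. By the construction of the unmixed component one has $\Ass_\RR N \subseteq \Assh_\RR M$, so every associated prime of $N$ satisfies $\dim \RR/\p = r$. The Cohen--Macaulay-image hypothesis guarantees (via universal catenarity and the equidimensionality of formal fibers) that this property transfers to the completion: every $\p \in \Ass_{\widehat{\RR}} \widehat{N}$ still satisfies $\dim \widehat{\RR}/\p = r$. Without this hypothesis the lifting to $\widehat{\RR}$ can fail, as illustrated by the counterexample in \cite[Remark 2]{chern3} noted just before the theorem, and the argument would collapse at this final step.
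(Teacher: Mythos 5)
Your proof is correct and follows essentially the route the paper intends (the paper itself offers no proof, saying only that Lemma~\ref{comparison} reduces matters to the ring-case argument of \cite[Theorem 2.6]{chern3}): you pass to $N=M/U$ via Lemma~\ref{comparison} and Corollary~\ref{e1par} to force $\dim_{\RR}U\le r-2$ and $e_1(Q,N)=0$, and then invoke Theorem~\ref{e1zeromod}. You also correctly isolate the one nontrivial point---that $\Ass_{\RR}N=\Assh_{\RR}M$ implies $\widehat{N}$ is unmixed over $\widehat{\RR}$---as the place where the Cohen--Macaulay-image hypothesis enters; for the record, the precise inputs there are that such rings are universally catenary (so $\widehat{\RR}/\p\widehat{\RR}$ is equidimensional for each $\p$ with $\dim \RR/\p=r$, by Ratliff) and that their formal fibers are Cohen--Macaulay (so $\widehat{\RR}/\p\widehat{\RR}$ acquires no embedded primes), which together give $\dim\widehat{\RR}/\P=r$ for every $\P\in\Ass_{\widehat{\RR}}\widehat{N}$.
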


\demo It is enough to prove (i) $\Rightarrow$ (ii). If $\dim_{\RR} U = r-1$, then by (i) and Lemma~\ref{comparison}--(b), we obtain
$0 \geq e_1(Q,M/U)=s_0 \geq 1$, which is a contradiction. Hence $\dim_{\RR} U \leq r-2$. This means that ${\ds 
0= e_1(Q,M) = e_1(Q,M/U)}$. By Theorem~\ref{e1zeromod}, $M/U$ is Cohen--Macaulay.
\QED

\medskip

The implication (i) $\Rightarrow$ (ii) in Theorem
\ref{1.2-2} is not true in general without the assumption that $\RR$ is
a homomorphic image of a Cohen--Macaulay ring. See \cite[Remark
2]{chern3} for an example. The following corollary gives a characterization of Cohen--Macaulayness.

\begin{Corollary}\label{char-CM}
Let $\RR$ be a Noetherian local ring, $M$ a finitely generated
$\RR$--module with $r=\dim_{\RR}M >0$, and $Q$ a parameter ideal for
$M$. Let $1 \leq k \leq r$ be an integer and assume that $e_i(Q,M)=0$
for all $1 \leq i \leq k$. Then $$\dim_{\widehat{\RR}}
\mathrm{U}_{\widehat{M}} (0) \leq r - (k+1)\ \ \text{and}\ \ \ \H_{\fkm}^{r-j}(M) = (0)$$ for
all $1 \leq j \leq k$. In particular, if $k = r$, then $M$ is a
Cohen--Macaulay $\RR$--module.
\end{Corollary}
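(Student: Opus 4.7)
Plan: I would first pass to the $\fkm$-adic completion. This reduction is essentially free, since $e_i(Q, M) = e_i(Q\widehat{\RR}, \widehat{M})$ and $\H_\fkm^i(M) \cong \H_{\fkm\widehat{\RR}}^i(\widehat{M})$ by faithful flatness, while the conclusion is already phrased for $\mathrm{U}_{\widehat M}(0)$. The payoff is that $\widehat{\RR}$ is complete, hence a homomorphic image of a regular (in particular Cohen--Macaulay) local ring by Cohen's structure theorem, so Theorem~\ref{1.2-2} becomes available. Write $U := \mathrm{U}_{\widehat M}(0)$ and $\bar M := \widehat{M}/U$, so $\bar M$ is unmixed of dimension $r$.

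Applying Theorem~\ref{1.2-2} to the hypothesis $e_1(Q, \widehat{M}) = 0$ gives that $\bar M$ is Cohen--Macaulay and $\dim_{\widehat{\RR}} U \leq r-2$. Since $\bar M$ is Cohen--Macaulay of dimension $r$, the generators of $Q$ form a regular sequence on $\bar M$, so $\gr_Q(\bar M) = (\bar M/Q\bar M)[T_1, \ldots, T_r]$; consequently $\lambda(\bar M / Q^{n+1}\bar M) = e_0(Q, \bar M)\binom{n+r}{r}$ for all $n \geq 0$, and $e_i(Q, \bar M) = 0$ for every $i \geq 1$.

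The additivity of length applied to $0 \to U \to \widehat{M} \to \bar M \to 0$ then yields, for $n \gg 0$,
$$\lambda(\widehat{M}/Q^{n+1}\widehat{M}) = e_0(Q,\bar M)\binom{n+r}{r} + \lambda(U/(Q^{n+1}\widehat{M} \cap U)),$$
and the second summand is a polynomial of degree $d := \dim_{\widehat{\RR}} U$ with leading coefficient $s_0 \geq 1$, at least when $U \neq 0$ (the case $U=0$ being immediate). Expanding both sides in the binomial basis $\{\binom{n+r-i}{r-i}\}_{0 \leq i \leq r}$ and matching the unique degree-$d$ contribution gives $(-1)^{r-d} e_{r-d}(Q, \widehat{M}) = s_0 \neq 0$. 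Since $e_i(Q, \widehat{M}) = 0$ for $1 \leq i \leq k$, this compels $r - d > k$, that is, $\dim_{\widehat{\RR}} U \leq r - (k+1)$.

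For the vanishing of local cohomology, feed this dimension bound and the Cohen--Macaulayness of $\bar M$ into the long exact sequence of $0 \to U \to \widehat{M} \to \bar M \to 0$: for every $i$ in the range $r - k \leq i \leq r - 1$ one has $\H_\fkm^i(U) = 0$ (as $i > \dim_{\widehat{\RR}} U$) and $\H_\fkm^i(\bar M) = 0$ (as $i < \depth \bar M = r$), so $\H_\fkm^i(\widehat{M}) = 0$, which transfers back to $\H_\fkm^{r-j}(M) = 0$ for $1 \leq j \leq k$. The case $k = r$ forces $U = 0$ together with vanishing of $\H_\fkm^i(M)$ for all $i < r$, i.e.\ $M$ Cohen--Macaulay. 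The main obstacle---and the reason the statement is phrased in terms of $\mathrm{U}_{\widehat M}(0)$ rather than $\mathrm{U}_M(0)$---is arranging a setting where Theorem~\ref{1.2-2} applies; once completion handles that, the rest is a clean leading-coefficient comparison on Hilbert polynomials together with a short local-cohomology chase, with no induction on $k$ required.
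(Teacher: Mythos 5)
Your proof is correct and follows essentially the same route as the paper's: complete, invoke Theorem~\ref{1.2-2} to get $\widehat{M}/U$ Cohen--Macaulay with $\dim U\le r-2$, compare Hilbert polynomials along $0\to U\to \widehat{M}\to \widehat{M}/U\to 0$ to force $\dim_{\widehat{\RR}}U\le r-(k+1)$, and finish with the local cohomology long exact sequence. The paper merely compresses your explicit leading-coefficient matching into the phrase ``computing Hilbert polynomials.''
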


\begin{proof}
We may assume that $\RR$ is complete. We put $U =\mathrm{U}_M (0)$ and $N = M/U$.
Then $e_0(Q,M)=e_0(Q,N)$, since $\dim_{\RR} U < r$. Therefore, by Theorem~\ref{1.2-2} $N$ is a Cohen--Macaulay $\RR$--module, so that we have exact sequences
\[0 \to U/Q^{n+1}U \to
M/Q^{n+1}M \to N/Q^{n+1} N \to 0\]
 of $\RR$--modules for all $n \ge 0$. Hence, computing
Hilbert polynomials, we get $\dim_{\RR} U \leq r -(k+1)$. Let $1 \leq
j \leq k$. Then $\H_{\fkm}^{r-j}(U)= (0)$, since $\dim_{\RR} U <r-j$, while
$\H_{\fkm}^{r-j}(N)=(0)$, as $N$ is a Cohen--Macaulay $\RR$--module with $\dim_{\RR}N = r$. Thus $\H_{\fkm}^{r-j}(M) = (0)$ as claimed.
\end{proof}

\medskip

Let $\RR$ be a Noetherian local ring and $M$ a finitely generated $\RR$--module.  In \cite{chern3} the authors examined  the rings with $e_1(Q,R)$  vanishing. Here we briefly extend this theory to modules.
Let us begin with the definition.

\begin{Definition}
A finitely generated $\RR$--module $M$ is called a {\em Vasconcelos}
module\footnote{The terminology is due to the other five authors.},
if either $\dim_{\RR} M = 0$, or $\dim_{\RR} M > 0$ and $e_1(Q,M) = 0$ for
some parameter ideal $Q$ for $M$.
\end{Definition}

\noindent
Every Cohen--Macaulay module is by definition  Vasconcelos.
Here is a basic characterization. We omit the proof, since it is similar to those in the ring case.

\begin{Theorem}\label{vchar} Let
$(\RR, \m)$ be a Noetherian local ring and $M$ a finitely
generated $\RR$--module with $r=\dim_{\RR} M\geq 2$. Let $U = \mathrm{U}_{\widehat{M}}(0)$ be the unmixed component of $(0)$ in the $\m$--adic completion $\widehat{M}$ of $M$.
Then the  following conditions are
equivalent{\rm \,:}
\begin{enumerate}
\item[{\rm (i)}] $M$ is a Vasconcelos $\RR$--module{\rm \,;}
\item[{\rm (ii)}] $e_1(Q,M) = 0$ for every parameter ideal $Q$ for $M${\rm \,;}
\item[{\rm (iii)}] $\widehat{M}/\mathrm{U}_{\widehat{M}}(0)$ is a Cohen--Macaulay
$\widehat{\RR}$--module and $\operatorname{dim}_{\widehat{\RR}}
\mathrm{U}_{\widehat{M}}(0) \le r-2${\rm \,;}
\item[{\rm (iv)}] There exists a proper $\widehat{\RR}$--submodule $L$ of $\widehat{M}$ such that
$\widehat{M}/L$ is a Cohen--Macaulay $\widehat{\RR}$--module with
$\operatorname{dim}_{\widehat{\RR}}L \le r-2$.
\end{enumerate}
When this is the case, $\widehat{M}$ is  a Vasconcelos
$\widehat{\RR}$--module and $\H_{\m}^{r-1}(M) = (0)$.
\end{Theorem}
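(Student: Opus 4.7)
The plan is to route everything through the three already-established pillars: Theorem \ref{e1zeromod} (unmixed case), Theorem \ref{1.2-2} (over a homomorphic image of a Cohen--Macaulay ring), and Proposition \ref{genhs}(b) (Hilbert coefficients under short exact sequences with a low-dimensional kernel). The implication $(2) \Rightarrow (1)$ is trivial, and $(3) \Rightarrow (4)$ is obtained by taking $L := U$, which is proper in $\widehat{M}$ because $\dim_{\widehat{\RR}} U \le r-2 < r = \dim_{\widehat{\RR}} \widehat{M}$.

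For $(1) \Rightarrow (3)$, I would pass to the $\m$--adic completion. Since lengths of modules of finite colength are preserved under $\m$--adic completion, we have $\lambda_{\RR}(M/Q^{n+1}M) = \lambda_{\widehat{\RR}}(\widehat{M}/\widehat{Q}^{n+1}\widehat{M})$ for the parameter ideal $\widehat{Q} = Q\widehat{\RR}$ of $\widehat{M}$, whence $e_1(\widehat{Q}, \widehat{M}) = e_1(Q, M) = 0$. By Cohen's structure theorem $\widehat{\RR}$ is a homomorphic image of a regular (hence Cohen--Macaulay) local ring, and $\dim_{\widehat{\RR}} \widehat{M} = r \ge 2$, so Theorem \ref{1.2-2} applied to $\widehat{M}$ yields precisely that $\widehat{M}/U$ is Cohen--Macaulay and $\dim_{\widehat{\RR}} U \le r-2$.

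For $(4) \Rightarrow (2)$, let $Q$ be an arbitrary parameter ideal for $M$, set $\widehat{Q} = Q\widehat{\RR}$, and consider the short exact sequence
\[
0 \to L \to \widehat{M} \to \widehat{M}/L \to 0.
\]
Since $\widehat{M}/L$ is Cohen--Macaulay of dimension $r \ge 2$, $\widehat{Q}$ is a parameter ideal for the unmixed $\widehat{\RR}$--module $\widehat{M}/L$, so Theorem \ref{e1zeromod} gives $e_1(\widehat{Q}, \widehat{M}/L) = 0$. Because $\dim_{\widehat{\RR}} L \le r - 2$, Proposition \ref{genhs}(b) with $s - t \ge 2$ yields $e_1(\widehat{Q}, \widehat{M}) = e_1(\widehat{Q}, \widehat{M}/L) = 0$, and, by stability of $e_1$ under completion, $e_1(Q, M) = 0$. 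Since $Q$ was arbitrary, this gives (2).

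For the closing remarks, applying the now-proved equivalence $(1) \Leftrightarrow (3)$ to the module $\widehat{M}$ over $\widehat{\RR}$ (whose unmixed component is again $U$) shows that $\widehat{M}$ is Vasconcelos. Finally, $\H_{\m}^{r-1}(M) = (0)$ follows from the exact sequence $0 \to U \to \widehat{M} \to \widehat{M}/U \to 0$: one has $\H_{\m}^{r-1}(U) = (0)$ from the dimension estimate $\dim_{\widehat{\RR}} U \le r-2$, and $\H_{\m}^{r-1}(\widehat{M}/U) = (0)$ from Cohen--Macaulayness, so $\H_{\m}^{r-1}(\widehat{M}) = (0)$, and local cohomology commutes with $\m$--adic completion. (Alternatively, apply Corollary \ref{char-CM} with $k = 1$.) The only mildly delicate point in this program is the verification that $e_1$ is invariant under completion and that $\widehat{Q}$ really is a parameter ideal for $\widehat{M}/L$, both of which are routine length computations.
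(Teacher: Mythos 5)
Your proof is correct and takes essentially the approach the paper intends: the authors omit the proof of Theorem~\ref{vchar} on the grounds that it is identical to the ring case of \cite{chern3}, and your argument is exactly that adaptation, assembled from Theorem~\ref{1.2-2} applied over $\widehat{\RR}$, Proposition~\ref{genhs}(b) for the exact sequence $0\to L\to \widehat{M}\to \widehat{M}/L\to 0$, and the invariance of $e_1$ and of local cohomology under completion. No gaps.
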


\medskip

\begin{Remark}{\rm
Several properties of Vasconcelos rings such as \cite[3.5, 3.8, 3.9, 3.10, 3.11, 3.12, 3.13, 3.15, 3.16, 3.17]{chern3} can be all extended to Vasconcelos modules.
}\end{Remark}

\section{Generalized Cohen--Macaulayness of modules with $\Lambda
(M)$ finite}

Let $\RR$ be a Noetherian local ring with maximal ideal $\m$ and $M$ a finitely generated $\RR$--module with $r =
\dim_{\RR} M > 0$.  In this section we study the problem of when the set
\[\Lambda (M) = \{e_1(Q,M) \mid Q ~\operatorname{is ~a ~parameter
~ideal ~for} M \}\] is finite. Part of the motivation comes from the
fact that generalized Cohen--Macaulay modules have this property.
Recall that $M$ is said to be generalized Cohen--Macaulay, if all the local
cohomology modules $\{\H_\m^i(M) \}_{0 \le i < r}$ are finitely
generated (see \cite{CST} where these modules originated).

\medskip

Assume that $M$ is a generalized Cohen--Macaulay $\RR$--module with $r
=\dim_{\RR}M \ge 2$ and put \[s=\sum_{i=1}^{r-1}\binom{r-2}{i-1}h^i(M),\] where
$h^i(M)=\l_\RR(\H_\fkm^i(M))$ for each $i \in \mathbb{Z}$.  If $Q$ is a
parameter ideal for $M$, by the proof of \cite[Lemma 2.4]{GNi} we have
that $ e_1(Q,M) \ge -s.$ Since $e_1(Q,M)\leq 0$ by Corollary
\ref{e1par}, it follows that $\Lambda (M)$ is a finite set.

\medskip

Let us establish here that if $M$ is unmixed and $\Lambda (M)$ is
finite, then $M$ is indeed a generalized Cohen--Macaulay $\RR$--module (Proposition
\ref{flc}).

\medskip

Assume now that $\RR$ is a homomorphic image of a Gorenstein local ring and that $\Ass_{\RR}M = \Assh_{\RR}M$. Then $\RR$ contains a system $x_1, x_2, \ldots, x_r$ of parameters of $M$ which forms a strong $d$-sequence for $M$, that is, the sequence $x_1^{n_1}, x_2^{n_2}, \ldots, x_r^{n_r}$ is a $d$-sequence for $M$ for all integers $n_1, n_2, \ldots, n_r \ge 1$ (see \cite[Theorem 2.6]{Cu} or \cite[Theorem 4.2]{Kw} for the existence of such systems of parameters). For each integer $q \ge 1$
let $\Lambda_q(M)$ be the set of values $e_1(Q,M)$, where $Q$ runs over the parameter ideals for $M$ such that $Q \subseteq \m^q$ and $Q=(x_1, x_2, \ldots, x_r)$ with $x_1, x_2, \ldots, x_r$ a $d$-sequence  for $M$. We then have $\Lambda_q(M) \ne \emptyset$, $\Lambda_{q+1}(M) \subseteq \Lambda_q(M)$ for all $q \ge 1$, and $\alpha \le 0$ for every $\alpha \in \Lambda_q(M)$ (Corollary \ref{e1par}).

\medskip

The following result plays a key role in our argument. The proof which we present here is based on Theorem  \ref{unmixedrep} and slightly different from that of the ring case.

\begin{Lemma}\label{key}
Let $(\RR,\m)$ be a Noetherian local ring and assume that $\RR$ is a homomorphic image
of a Gorenstein ring. Let $M$ be a finitely generated
$\RR$--module with $r = \dim_{\RR} M \geq 2$ and $\operatorname{Ass}_\RR
M= \operatorname{Assh}_\RR M$. Assume that $\Lambda_q(M)$ is
a finite set for some integer $q \ge 1$ and put $\ell =
-\operatorname{min}\Lambda_q(M)$.  Then $\m^{\ell}\H_\m^i(M) = (0)$ for
all $i \ne r$ and hence all the local cohomology modules
$\{\H_\m^i(M)\}_{0 \le i < r}$ are finitely generated.
\end{Lemma}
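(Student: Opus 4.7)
The plan is to proceed by induction on $r = \dim_\RR M \geq 2$. The key point in both the base and inductive steps is to translate the lower bound $e_1(Q,M) \geq -\ell$, valid for all $d$-sequence parameter ideals $Q \subseteq \m^q$, into a bound on the length of certain colon submodules of local cohomology, and then to exploit the Artinian nature of $\H_\m^i(M)$ for $i < r$ to conclude annihilation by $\m^\ell$. Throughout we may complete $\RR$ and enlarge the residue field, and since $\Ass_\RR M = \Assh_\RR M$, every parameter element for $M$ is a nonzerodivisor on $M$.

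For the base case $r = 2$, we first note that $\H_\m^0(M) = 0$, because $\m \notin \Ass_\RR M = \Assh_\RR M$. Fix a strong $d$-sequence $x_1, x_2$ for $M$. For every integer $n$ large enough that $Q_n := (x_1^n, x_2^n) \subseteq \m^q$, choose $x_1^n$ superficial for $M$ with respect to $Q_n$ (possible after enlarging $\RR/\m$). Then Proposition~\ref{genhs}(d) yields
\[ -e_1(Q_n, M) \;=\; \lambda\!\bigl((0):_{\H_\m^1(M)} x_1^n\bigr). \]
The submodules on the right form an ascending chain in the Artinian module $\H_\m^1(M)$, and since $x_1 \in \m$ acts locally nilpotently on that Artinian module, the chain exhausts $\H_\m^1(M)$. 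The hypothesis $-e_1(Q_n,M) \leq \ell$ then forces $\lambda(\H_\m^1(M)) \leq \ell$, and any module of length at most $\ell$ over a local ring is killed by $\m^\ell$.

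For the inductive step $r \geq 3$, pick $n$ large and set $h = x_1^n \in \m^q$, chosen superficial for $M$. Since $h$ is $M$-regular, Proposition~\ref{genhs}(a) gives $e_1(Q,M) = e_1(Q/h\RR, M/hM)$ for every $d$-sequence parameter ideal $Q \ni h$ of $M$ with $Q \subseteq \m^q$, so the associated values of the Chern coefficient for $M' := M/hM$ form a set bounded below by $-\ell$. The module $M'$ has dimension $r-1$ but may acquire embedded primes, so introduce $U = \rmU_{M'}(0)$ and $N = M'/U$; by Lemma~\ref{comparison}(c), the analogous values for $N$ are still bounded below by $-\ell$. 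Since $\Ass_\RR N = \Assh_\RR N$ and the strong $d$-sequence hypothesis provides enough $d$-sequences on $N$ (obtained from the induced sequence $x_2^{n_2},\ldots,x_r^{n_r}$ on $M'$, then on $N$), the inductive hypothesis applied to $N$ yields $\m^\ell \H_\m^i(N) = 0$ for $0 \leq i < r-1$. The short exact sequence $0 \to U \to M' \to N \to 0$ together with $\dim U < r-1$ transfers the annihilation to $\H_\m^i(M')$ for $i < r-1$, and the long exact sequence of local cohomology derived from $0 \to M \xrightarrow{h} M \to M' \to 0$, combined with the Artinianness of $\H_\m^i(M)$ and the locally nilpotent action of $h$ on it, propagates $\m^\ell$-annihilation back to $\H_\m^i(M)$ for all $i < r$.

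The main obstacle is the inductive step: verifying that the subset of Chern coefficients arising from $d$-sequence parameter ideals of $M$ that contain the fixed element $h$ is rich enough to control $\Lambda_{q'}(N)$, which is phrased for \emph{all} $d$-sequence parameter ideals of $N$. This is precisely where the \emph{strong} $d$-sequence hypothesis is essential, as arbitrary $d$-sequence parameter ideals of $N$ need not lift to $d$-sequence parameter ideals of $M$, while strong $d$-sequences admit arbitrary powers and enough flexibility to produce the needed lifts. Tracking the constant $\ell$ (rather than some larger function of $\ell$) across the unmixed-component correction from Lemma~\ref{comparison} and through the cohomological long exact sequence is the most delicate bookkeeping in the argument.
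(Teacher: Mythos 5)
Your base case $r=2$ is fine (it is a mild variant of the paper's: the paper instead uses Corollary~\ref{fg} to see that $\H_\m^1(M)$ is finitely generated, picks one $Q\subseteq\m^q$ with $Q\H_\m^1(M)=(0)$, and reads off $e_1(Q,M)=-\lambda(\H_\m^1(M))$ from Proposition~\ref{genhs}(d); your exhaustion by the chain $(0):_{\H_\m^1(M)}x_1^n$ achieves the same thing). The inductive step, however, has two genuine gaps, and the second one you do not even flag. First, the lifting problem you identify as ``the main obstacle'' is not resolved by your proposed mechanism: taking powers $x_2^{n_2},\ldots,x_r^{n_r}$ of the one fixed strong $d$-sequence produces only a restricted family of parameter ideals of $N$, whereas the inductive hypothesis requires a lower bound $e_1(Q',N)\ge -\ell$ for \emph{every} $d$-sequence parameter ideal $Q'\subseteq\m^{q'}$ of $N$, since $\ell''=-\min\Lambda_{q'}(N)$ must be shown to be $\le\ell$. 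The paper solves this by invoking Theorem~\ref{unmixedrep} to embed $M$ into $\RR^n$ over a Gorenstein quotient; then $L=\operatorname{Im}\varphi\subseteq(\RR/y\RR)^n$ satisfies $\Ass L=\Assh L$ and $L\cong N/\H_\m^0(N)$ with $\lambda(\H_\m^0(N))<\infty$, and after choosing $q'$ with $\m^{q'}N\cap\H_\m^0(N)=(0)$, an \emph{arbitrary} $d$-sequence $y_2,\ldots,y_r\in\m^{q'}$ for $L$ is automatically a $d$-sequence for $N$ and, prepending the regular element $y$, for $M$; this yields $\Lambda_{q'}(L)\subseteq\Lambda_q(M)$, which is exactly the inclusion your argument lacks.

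Second, your transfer of $\m^\ell$-annihilation from $\H_\m^i(N)$ back to $\H_\m^i(M')$ via $0\to U\to M'\to N\to 0$ with $U=\rmU_{M'}(0)$ fails as stated: $\dim_\RR U$ can be as large as $r-2$, so $\H_\m^i(U)$ need not vanish for $i\le\dim_\RR U$, and the long exact sequence only gives $\m^\ell\H_\m^i(M')\subseteq\operatorname{Im}(\H_\m^i(U)\to\H_\m^i(M'))$, not $(0)$. The embedding into a free module is again what rescues this in the paper: there the ``excess'' of $M/yM$ over its unmixed quotient is precisely $(0):_Cy$, which has finite length because $y$ is chosen superficial for the cokernel $C$ as well, so $\H_\m^i(M/yM)\cong\H_\m^i(L)$ for all $i\ge 1$ on the nose. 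Finally, note that the passage back to $\H_\m^1(M)$ cannot be handled by the colon-union argument alone (the relevant piece of the long exact sequence involves $\H_\m^0(N)$, which is not a priori killed by $\m^\ell$); the paper uses the finite generation of $\H_\m^1(M)$ (Corollary~\ref{fg}, again a consequence of unmixedness via the embedding) to choose $m$ with $x^m\H_\m^1(M)=(0)$ and obtain an embedding $\H_\m^1(M)\hookrightarrow\H_\m^1(N)$. In short, the skeleton of your induction is right, but without Theorem~\ref{unmixedrep} the two load-bearing steps do not close.
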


\begin{proof}
Passing to the ring $\RR/[(0):_{\RR}M]$, we may assume that $\RR$ is a Gorenstein ring with $\dim \RR = \dim_{\RR} M = r$. Enlarging the residue class field $\RR/\m$ of $\RR$  if necessary, we may assume the field $\RR/\m$ is infinite. By Corollary \ref{fg} $\rmH_\m^1(M)$ is finitely generated, since $M$ is unmixed.

Suppose that $r = 2$. We put $\ell' = \l (\rmH_\m^1(M))$. Let $Q = (x, y) \subseteq \m^q$ be a
system of parameters for $M$ such  that $Q\rmH_\m^1(M) = (0)$ and $x,y$ is a $d$-sequence
for $M$. Then $x$ is superficial for $M$ with respect to $Q$. Hence by Proposition \ref{genhs} (d) we get $e_1(Q,M) =
-\l (\rmH_\m^1(M) ) =-\ell'.$ Thus $\ell \ge \ell'$, as $-\ell' =e_1(Q,M) \in \Lambda_q(M)$. Hence $\m^{\ell}\rmH_\m^1(M) = (0)$, because $\m^{\ell'}\rmH_\m^1(M) = (0)$.

Suppose that $r \ge 3$ and that our assertion holds true for
$r-1$. We have an exact sequence
$$(\sharp) \ \ \
0 \to M \to \RR^n \to C \to 0
$$
of $\RR$--modules by Theorem  \ref{unmixedrep}. Choose an $\RR$-regular element $x \in \RR$ so that $x$ is superficial both for $M$ and $C$ with respect to $\m$. Let us fix an integer $m \ge 1$. We put $y = x^m$, $N = M/yM$,  and look at the exact sequence
$$
0 \to (0):_Cy \to N \overset{\varphi}{\to} (\RR/y\RR)^n \to C/yC \to 0
$$
of $\RR$--modules obtained by sequence $(\sharp)$. Let $L = \operatorname{Im} \varphi$. Then $\dim_{\RR}L = r-1$, $\Ass_{\RR}L = \Assh_{\RR}L$, and $\rmH_\m^0(N)\cong (0):_Cy$, because $L$ is an $\RR$--submodule of $(\RR/y\RR)^n$ and $\l ((0):_Cy) < \infty$. Hence $L \cong N/\rmH_\m^0(N)$.

Let $q' \ge q$ be an integer such that ${\m}^{q'}N \cap \H_\m^0(N) = (0)$. Let $y_2, y_3, \ldots, y_r \in \m^{q'}$ be a system of parameters for $L$ and assume that $y_2, y_3, \ldots, y_r$ is a $d$-sequence for $L$. Then, since $(y_2, y_3, \ldots, y_r)N \cap \H_\m^0(N) = (0)$, we get $y_2, y_3, \ldots, y_r$ forms a $d$-sequence for $N$ also. Therefore, since $y$ is $M$-regular, the sequence $y_1=y, y_2, \ldots, y_r$ forms a $d$-sequence for $M$, whence $y_1$ is superficial for $M$ with respect to $Q = (y_1, y_2, \ldots, y_r)$. Consequently
$$e_1((y_2, y_3, \ldots, y_r),L)=e_1((y_2, y_3, \ldots, y_r),N) = e_1(Q,M) \in \Lambda_q(M),$$ so that $\Lambda_{q'}(L) \subseteq  \Lambda_q(M).$
Hence, because  the set  $\Lambda_{q'}(L)$ is finite, the hypothesis of induction on $r$ yields that $\m^{\ell''}\H_\m^i(L) = (0)$ for all $i \ne r-1$, where $\ell'' = -\operatorname{min} \Lambda_{q'}(L)$.
Thus,  because $\ell'' \le \ell$,  $\m^{\ell}\H_\m^i(L) = (0)$ for all $i \ne r-1$. Hence $\m^{\ell}\rmH_\m^i(N) = (0)$ for all $1 \le i < r-1$, because $\rmH_\m^i(N) \cong \rmH_\m^i(L)$ for $i \ge 1$.

 Look now at the exact sequence
\[(\sharp \sharp)
\ \ \cdots \to \H_\m^1(M)
\overset{x^m}{\to} \H_\m^1(M) \to \H_\m^1(N) \to \cdots \to
\H_\m^i(N) \to \H_\m^{i+1}(M) \overset{x^m}{\to} \H_\m^{i+1}(M) \to
\cdots
\]
of local cohomology modules. We then have
\[
\m^{\ell}\left[(0):_{\H_\m^{i+1}(M)}x^m\right] = (0)
\]
for all integers $1 \le i \le r-2$ and $m \ge 1$, since
$\m^{\ell}\H_\m^i(N) = (0)$ for all $1 \le i \le r - 2$.  Thus
$\m^{\ell}\H_\m^{i+1}(M)= (0)$, because
\[
\H_\m^{i+1}(M) = \bigcup_{m \ge 1}\left[(0):_{\H_\m^{i+1}(M)}\m^m\right].
\]
On the other hand, from sequence $(\sharp \sharp)$ we
get the embedding $\H_\m^1(M) \subseteq \H_\m^1(N)$, choosing the integer $m \ge 1$ so that
$x^m\H_\m^1(M) = (0)$. Hence $\m^{\ell}\H_\m^{1}(M) = (0)$, which
completes the proof of Lemma \ref{key}.
\end{proof}

\medskip

Since $\Lambda (M)=\Lambda
(\widehat{M})$, passing to the completion $\widehat{M}$ of $M$ and  applying
Lemma~\ref{key}, we readily get the following.

\begin{Proposition}\label{flc}
Let $(\RR,\m)$ be a Noetherian local ring and $M$ a finitely
generated unmixed $\RR$--module with $r = \dim_{\RR} M \geq 2$. Assume that
$\Lambda (M)$ is a finite set and put $\ell = -\operatorname{min}
\Lambda (M)$. Then $\m^{\ell}\H_\m^i(M) = (0)$ for every $ i \ne r$, so that $M$ is a generalized Cohen--Macaulay $\RR$--module.
\end{Proposition}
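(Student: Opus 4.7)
The plan is to reduce the statement to Lemma~\ref{key} by passing to the $\m$--adic completion. First I would justify the implicit claim that $\Lambda(M) = \Lambda(\widehat{M})$: completion preserves lengths of $\RR$--modules of finite length, so $e_1(Q,M) = e_1(Q\widehat{\RR},\widehat{M})$ for every parameter ideal $Q$ of $M$; conversely, any parameter ideal of $\widehat{M}$ contains a parameter ideal of $\widehat{M}$ that is extended from a parameter ideal of $M$ (using that a superficial element can be chosen in $\m$ after enlarging the residue field if necessary), and this does not change $e_1$. Thus $\Lambda(\widehat{M})$ is finite with the same minimum $-\ell$.

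Next I would verify that $\widehat{M}$ meets the hypotheses of Lemma~\ref{key}. Cohen's structure theorem provides a surjection of a regular (hence Gorenstein) local ring onto $\widehat{\RR}$, and the unmixedness of $M$ is, by definition, the assertion $\Ass_{\widehat{\RR}}\widehat{M} = \Assh_{\widehat{\RR}}\widehat{M}$. Taking $q=1$, one has $\Lambda_1(\widehat{M}) \subseteq \Lambda(\widehat{M})$, so $\Lambda_1(\widehat{M})$ is finite, and setting $\ell_1 = -\min \Lambda_1(\widehat{M})$ one has $\ell_1 \le \ell$. Lemma~\ref{key} then gives $\widehat{\m}^{\ell_1}\H_{\widehat{\m}}^i(\widehat{M}) = (0)$, and a fortiori $\widehat{\m}^{\ell}\H_{\widehat{\m}}^i(\widehat{M}) = (0)$, for every $i \ne r$.

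Finally, the flat base change $\H_\m^i(M)\otimes_\RR \widehat{\RR} \cong \H_{\widehat{\m}}^i(\widehat{M})$ (combined with faithful flatness of $\widehat{\RR}/\RR$ to descend the annihilator) transfers the vanishing back to $\m^{\ell}\H_\m^i(M) = (0)$ for every $i \ne r$. Each $\H_\m^i(M)$ is Artinian, and being annihilated by $\m^{\ell}$ it is a module over the Artinian ring $\RR/\m^{\ell}$, hence of finite length and in particular finitely generated. Therefore $M$ is generalized Cohen--Macaulay.

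The main obstacle I anticipate is the bookkeeping around $\Lambda_q$ versus $\Lambda$: one must ensure $\Lambda_1(\widehat{M})$ is nonempty, which requires the existence of a system of parameters for $\widehat{M}$ contained in $\widehat{\m}$ that forms a $d$--sequence. This is supplied by the existence of strong $d$--sequences of parameters invoked just before Lemma~\ref{key}, which uses precisely that $\widehat{\RR}$ is a homomorphic image of a Gorenstein ring and that $\Ass_{\widehat{\RR}}\widehat{M} = \Assh_{\widehat{\RR}}\widehat{M}$. Once that is in hand, the nesting $\Lambda_{q+1}(\widehat{M})\subseteq \Lambda_q(\widehat{M})\subseteq \Lambda(\widehat{M})$ delivers the required bound $\ell_1\le \ell$ and the rest is immediate from Lemma~\ref{key}.
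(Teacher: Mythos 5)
Your overall strategy is exactly the paper's: the proof given there is the single sentence that $\Lambda(M)=\Lambda(\widehat{M})$, so one passes to the completion and applies Lemma~\ref{key}. Your bookkeeping with $\Lambda_q(\widehat{M})\subseteq\Lambda(\widehat{M})$, the nonemptiness of $\Lambda_q(\widehat{M})$ via the existence of strong $d$-sequences of parameters over the complete ring, and the descent of the annihilation along the faithfully flat map $\RR\to\widehat{\RR}$ are precisely the details being suppressed.

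The one step that does not work as written is your justification of the inclusion $\Lambda(\widehat{M})\subseteq\Lambda(M)$, which is the direction you actually need (both to see that $\Lambda_1(\widehat{M})$ is finite and to get $\ell_1\le\ell$). You argue that a parameter ideal of $\widehat{M}$ contains one extended from $\RR$ ``and this does not change $e_1$''; but replacing a parameter ideal by a smaller one changes $e_1$ in general --- the whole point of studying $\Lambda(M)$ is that $e_1$ varies with $Q$ --- so containment alone proves nothing. The standard repair is an approximation argument: given $\widehat{Q}=(\hat{x}_1,\dots,\hat{x}_r)$, choose $N$ with $\widehat{\m}^{N}\subseteq \widehat{Q}^2+[(0):_{\widehat{\RR}}\widehat{M}]$ and pick $x_i\in\RR$ with $x_i\equiv \hat{x}_i \bmod \widehat{\m}^{N}$ (possible since $\RR/\m^N\cong\widehat{\RR}/\widehat{\m}^N$). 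Nakayama then gives $(x_1,\dots,x_r)\widehat{\RR}+[(0):_{\widehat{\RR}}\widehat{M}]=\widehat{Q}+[(0):_{\widehat{\RR}}\widehat{M}]$, hence $(x_1,\dots,x_r)\widehat{M}=\widehat{Q}\widehat{M}$, so that $Q=(x_1,\dots,x_r)$ is a parameter ideal for $M$ with $e_1(\widehat{Q},\widehat{M})=e_1(Q,M)\in\Lambda(M)$. With that substitution your proof is complete and coincides with the paper's.
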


We conclude this section with a characterization of $\RR$--modules for
which $\Lambda(M)$ is finite.

Let us note the following with a brief proof.

\begin{Lemma}\label{e1finite}
Let $\RR$ be a Noetherian local ring and  $M$  a finitely
generated $\RR$--module with $r = \dim_{\RR} M \geq 2$. Assume that there
exists an integer $t \geq 0$ such that $e_1(Q,M) \geq -t$ for every
parameter ideal $Q$ for $M$. Then $\dim_{\RR} \mathrm{U}_M(0) \leq r-2$.
\end{Lemma}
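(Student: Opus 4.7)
My plan is to proceed by contradiction. By Lemma \ref{comparison}(a) we already have $\dim_\RR U \leq r-1$ for $U = \mathrm{U}_M(0)$, so the only case to eliminate is $\dim_\RR U = r-1$. Under that assumption, Lemma \ref{comparison}(b), applied with $N = M/U$, gives for every parameter ideal $Q$ for $M$
$$e_1(Q,M) = e_1(Q,N) - s_0(Q),$$
where $s_0(Q) \geq 1$ is the multiplicity of the graded module $\bigoplus_{n \geq 0} U/(Q^{n+1}M \cap U)$. Since $\dim_\RR N = r \geq 2$ and $Q$ remains a parameter ideal for $N$, Corollary \ref{e1par} gives $e_1(Q,N) \leq 0$, hence $e_1(Q,M) \leq -s_0(Q)$. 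So the lemma reduces to exhibiting a sequence of parameter ideals $Q_k$ for $M$ with $s_0(Q_k) \to \infty$, contradicting the uniform lower bound $-t$.

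The key step is to identify $s_0(Q)$ with the Samuel multiplicity $e_0(Q,U)$, where $U$ is regarded as an $\RR$-module of dimension $r-1$. By the Artin--Rees Lemma there is an integer $c \geq 0$ with $Q^{n+1}M \cap U = Q^{n-c}V$ for all $n \geq c$, where $V := Q^{c+1}M \cap U$. Since $U/V$ is a subquotient of $M/Q^{c+1}M$ it has finite length, so $\dim_\RR (U/V) \leq 0 < r-1$, and additivity of multiplicity over the exact sequence $0 \to V \to U \to U/V \to 0$ yields $e_0(Q,V) = e_0(Q,U)$. Comparing leading coefficients of the Hilbert polynomial shows $s_0(Q) = e_0(Q,V) = e_0(Q,U)$.

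It then remains to produce parameter ideals of $M$ on which $e_0(-,U)$ is unbounded. I fix any system of parameters $x_1, x_2, \ldots, x_r$ for $M$ and set $Q_k = (x_1^k, x_2^k, \ldots, x_r^k)$; clearly $Q_k$ is a parameter ideal for $M$ and $Q_k \subseteq \m^k$. From $Q_k^n \subseteq \m^{kn}$ I obtain $\lambda_\RR(U/Q_k^n U) \geq \lambda_\RR(U/\m^{kn}U)$; comparing leading terms of the two Hilbert polynomials of degree $r-1$ produces
$$e_0(Q_k,U) \geq k^{r-1} e_0(\m, U).$$
Because $\dim_\RR U = r-1 \geq 1$ forces $e_0(\m, U) > 0$, we conclude $e_1(Q_k, M) \leq -k^{r-1} e_0(\m, U) \to -\infty$, violating the hypothesis for $k \gg 0$. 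Therefore $\dim_\RR U \leq r-2$.

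The main subtle point is the identification $s_0(Q) = e_0(Q,U)$: the filtration appearing in Lemma \ref{comparison}(b) is the one induced from $M$ rather than the $Q$-adic filtration on $U$, and the Artin--Rees reduction together with additivity of multiplicity is precisely what bridges the two. Once this identification is in place, the $x_i \mapsto x_i^k$ trick forces the multiplicity on $U$ to grow like $k^{r-1}$, which is the engine of the contradiction.
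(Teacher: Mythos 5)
Your proof is correct, and it follows the same overall strategy as the paper's: argue by contradiction assuming $\dim_{\RR}\mathrm{U}_M(0)=r-1$, decompose $e_1(Q,M)=e_1(Q,N)-s_0(Q)$ via the exact sequence $0\to U\to M\to N\to 0$, use $e_1(Q,N)\le 0$, and then drive the correction term $s_0(Q)$ to infinity by raising parameters to powers. The differences are in the execution. The paper first chooses a special system of parameters with $x_rU=(0)$ (available by prime avoidance, since $\ann U\not\subseteq\p$ for $\p\in\Assh_\RR M$); after the Artin--Rees reduction this turns the relevant filtration on $U'$ into the $\q$-adic one for the genuine parameter ideal $\q=(x_1^\ell,x_2,\ldots,x_{r-1})$ of the $(r-1)$-dimensional module $U$, and the exact multiplicativity $e_0(\q,U)=\ell\, e_0((x_1,\ldots,x_{r-1}),U)\ge\ell$ finishes the argument by perturbing a single parameter. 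You instead keep an arbitrary system of parameters, identify $s_0(Q)$ with the Samuel multiplicity $e_0(Q,U)$ of the ideal of definition $Q$ on $U$ (your Artin--Rees plus additivity step is exactly the bridge the paper builds with $U'=Q^kM\cap U$, so that identification is sound), and then get unboundedness from the cruder containment $Q_k=(x_1^k,\ldots,x_r^k)\subseteq\m^k$, which gives $e_0(Q_k,U)\ge k^{r-1}e_0(\m,U)\ge k^{r-1}$. What your route buys is the elimination of the special choice $x_rU=(0)$ and of the multiplicativity formula for parameter ideals, at the mild cost of perturbing all $r$ parameters instead of one; what the paper's route buys is a sharper, linear-in-$\ell$ growth from a one-parameter deformation, which pinpoints more precisely how little one must move $Q$ to violate the bound. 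Both are complete proofs.
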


\begin{proof}
Let $U=\mathrm{U}_M(0)$ and $N = M/U$. Assume that $\dim_{\RR} U=r-1$. Choose a
system $x_1, x_2, \dots , x_r$ of parameters of $M$ such that $x_r U = (0)$. Let $\ell > t$ be an integer and put
$Q = (x_1^\ell, x_2, \dots , x_r)$. Then we get
exact sequences
\[0 \to U/(Q^{n+1}M \cap U) \to
M/Q^{n+1}M \to N/Q^{n+1}N \to 0\]
 of $\RR$--modules for all $n \geq 0$. Let us take an integer $k \geq 0$ so
that $$Q^nM \cap U = Q^{n-k}(Q^kM \cap U)$$ for $n \geq k$ and consider $U' = Q^k M \cap U$. Let $\q = (x_1^\ell, x_2, \dots ,
x_{r-1})$. We then have $$Q^{n-k}U' = \q^{n-k} U',$$ as $x_r U =(0)$.
Hence for all $n \geq k$
\[\l_\RR (M/Q^{n+1}M) =
\l_\RR(N/Q^{n+1}N) + \l_\RR (U'/\q^{n-k+1}U') + \l_\RR (U/U'),\] which
yields  $-t \leq e_1(Q,M) = e_1(Q,N) - e_0(\q,U').$
Hence $$-t \leq e_1(Q,M) = e_1(Q,N) - e_0(\q,U),$$
because $e_0(\q,U) = e_0(\q,U')$ (remember that $\lambda (U/U') < \infty$). Therefore, since
$e_1(Q,N) \leq 0$ by Corollary~\ref{e1par},
we get
\[\ell \leq
\ell e_0((x_1, x_2, \dots , x_{r-1}),U) = e_0(\q,U) \leq e_1(Q,N) +
t \leq t,\] which is impossible. Thus $\dim_{\RR} U \leq r-2$.
\end{proof}

\medskip

\begin{Remark}\label{e1geqt}
{\rm
Let $\RR$ be a Noetherian local ring and $M$ a finitely generated $\RR$--module with $r = \dim_{\RR} M \geq 2$. Assume that $\dim_{\RR}\mathrm{U}_M(0) \leq r-2$. Let $\q$ be a parameter ideal for $N=M/\mathrm{U}_M(0)$. Then one can find a parameter ideal $Q$ for $M$ with $QN = \q N$, so that $e_1(\q,N) = e_1(\q,M)$ by Lemma $\ref{comparison}$. Hence $\Lambda (M) = \Lambda (N)$.}
\end{Remark}

\medskip

The goal of this section is the following.

\medskip

\begin{Theorem} \label{genCMchern3}
Let $(\RR,\m)$ be a Noetherian local ring and $M$  a finitely
generated $\RR$--module with $r = \dim_{\RR} M \geq 2$. Let
$U=\mathrm{U}_{\widehat{M}}(0)$ denote the unmixed component of $(0)$ in the
$\fkm$-adic completion $\widehat{M}$ of $M$. Then the following conditions are equivalent{\rm \,:}
\begin{enumerate}
\item[{\rm (i)}] $\Lambda(M)$ is a finite set{\rm \,;}
\item[{\rm (ii)}] $\widehat{M}/U$ is a generalized Cohen--Macaulay $\widehat{\RR}$--module and $\dim_{\widehat{\RR}}
U \leq r-2$.
\end{enumerate}
When this is the case, one has the estimation $$0 \ge e_1(Q,M) \geq
-\sum_{i=1}^{r-1} \binom{r-2}{i-1} h^i(\widehat{M}/U)$$ for every
parameter ideal $Q$ for $M$.
\end{Theorem}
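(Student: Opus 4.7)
The plan is to immediately reduce to the complete case, since by construction $\Lambda(M) = \Lambda(\widehat{M})$, parameter ideals for $M$ extending to parameter ideals for $\widehat{M}$ and vice versa, with the same Hilbert coefficients. So I assume $\RR$ is complete and $U = \mathrm{U}_M(0)$, $N = M/U$.

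For the direction (2) $\Rightarrow$ (1), the hypothesis $\dim_{\RR} U \le r-2$ together with Remark~\ref{e1geqt} (or directly Lemma~\ref{comparison}(c)) gives $\Lambda(M) = \Lambda(N)$, and Lemma~\ref{comparison}(b) yields $e_1(Q,M) = e_1(Q,N)$ for every parameter ideal $Q$ for $M$. Since $N$ is generalized Cohen--Macaulay, the bound $e_1(Q,N) \ge -\sum_{i=1}^{r-1}\binom{r-2}{i-1}h^i(N)$ from \cite[Lemma 2.4]{GNi} (recalled in the opening paragraph of this section) shows $\Lambda(N)$, hence $\Lambda(M)$, is finite. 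This simultaneously establishes the claimed numerical bound, using $h^i(\widehat{M}/U) = h^i(N)$ and Corollary~\ref{e1par} for the upper bound $0$.

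For (1) $\Rightarrow$ (2), I would first apply Lemma~\ref{e1finite}: finiteness of $\Lambda(M)$ certainly gives a uniform lower bound $-t$ on all $e_1(Q,M)$, hence $\dim_{\RR} U \le r-2$. With this dimension estimate in hand, Remark~\ref{e1geqt} applies and gives $\Lambda(N) = \Lambda(M)$, so $\Lambda(N)$ is finite as well. Now $N$ is unmixed of dimension $r \ge 2$, so Proposition~\ref{flc} applies directly to $N$ and yields that $N = \widehat{M}/U$ is generalized Cohen--Macaulay, as required.

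The main subtlety is that $M$ itself need not be unmixed, so Proposition~\ref{flc} cannot be invoked on $M$ directly; the whole point of Lemma~\ref{e1finite} and Lemma~\ref{comparison} is to strip off $U$ and transfer the finiteness of $\Lambda$ to the unmixed quotient $N$ without losing information about $e_1$. Once one verifies that $\dim U \le r-2$ forces $e_1(Q,M) = e_1(Q,N)$ (which is exactly Lemma~\ref{comparison}(b) with the second case excluded), the passage between $M$ and $N$ becomes essentially free, and the remaining content is precisely Proposition~\ref{flc} plus the generalized Cohen--Macaulay bound. The quantitative estimate then reads off from this chain of equalities together with the cited bound.
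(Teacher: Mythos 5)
Your proposal is correct and follows essentially the same route as the paper: reduce to the complete case, use Lemma~\ref{e1finite} to get $\dim_{\RR}U\le r-2$, transfer finiteness of $\Lambda$ between $M$ and the unmixed quotient $N=M/U$ via Remark~\ref{e1geqt} and Lemma~\ref{comparison}, and then invoke Proposition~\ref{flc} for one direction and the bound of \cite[Lemma 2.4]{GNi} for the other. The paper's proof is just a terser version of exactly this argument, including the derivation of the numerical estimate.
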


\begin{proof}
We may assume that $\RR$ is complete.

\noindent (i) $\Rightarrow$ (ii) Since the set $\Lambda(M)$ is  finite,
by Proposition \ref{e1finite} we get $\dim_{\RR} U \leq
r-2$. By Remark \ref{e1geqt} the set $\Lambda(M/U)$ is
 finite, so that $M/U$ is a generalized
Cohen--Macaulay $\RR$--module by Proposition \ref{flc}.

\medskip

\noindent (ii) $\Rightarrow$ (i) By \cite[Lemma 2.4]{GNi} the set
$\Lambda(M/U)$ is finite and hence the set $\Lambda(M)$ is also finite by Lemma \ref{comparison}.

\medskip

\noindent See \cite[Lemma 2.4]{GNi} for the last assertion.
\end{proof}

\section{Buchsbaumness of modules possessing constant first Hilbert
coefficients of parameters}

Let $\RR$ be a Noetherian local ring with maximal ideal $\m$ and
$M$  a finitely generated $\RR$--module with $r = \dim_{\RR} M > 0$.  In
this section we study the problem of when $e_1(Q,M)$ is independent
of the choice of parameter ideals $Q$ for $M$. Part of the motivation
comes from the fact that Buchsbaum modules have this property. We
establish here that if $e_1(Q,M)$ is constant and $M$ is unmixed,
then $M$ is indeed a Buchsbaum $\RR$--module (Theorem \ref{4.1}). See \cite{GO} for
the ring case.

\medskip

First of all let us  recall some  definitions. A system $x_1,
x_2, \ldots, x_r$ of parameters of $M$ is said to be {\it standard}, if it forms a
$d^+$-sequence for $M$, that is, $x_1, x_2, \ldots, x_r$ forms a strong
$d$-sequence for $M$ in any order. Remember that $M$ possesses a standard
system of parameters if and only if $M$ is a generalized
Cohen--Macaulay $\RR$--module (\cite{T}).

Let $Q$ be a parameter ideal for $M$. Then we say that $Q$ is standard, if it is
generated by a standard system of parameters of $M$. Remember that $Q$ is
standard if and only if the equality
\[\l_\RR(M/QM) - e_0(Q,M) =
\sum_{i=0}^{r-1}\binom{r-1}{i}h^i(M):={\mathbb I}(M)\] holds (\cite[Theorem 2.1]{T}). It is known that every system of parameters of $M$ contained in a standard parameter ideal for $M$ is standard (\cite{T}).

\medskip

Suppose  that $M$ is a generalized Cohen--Macaulay $\RR$--module with $r =\dim_{\RR}M\ge
2$ and $s=\sum_{i=1}^{r-1}\binom{r-2}{i-1}h^i(M)$. If $Q$ is a
parameter ideal for $M$, then by \cite[Lemma 2.4]{GNi} we get $
e_1(Q,M) \ge -s, $ where the equality holds if $Q$ is standard
(\cite[Korollar 3.2]{Sch1}).

\medskip

We say that our $\RR$--module $M$ is Buchsbaum, if every
parameter ideal for $M$ is standard. Hence, if $M$ is a Buchsbaum $\RR$-module with $r=\dim_{\RR}M \ge 2$, then $M$ is a generalized Cohen--Macaulay $\RR$--module with
$$e_1(Q,M)=-\sum_{i=1}^{r-1}\binom{r-2}{i-1}h^i(M)$$
for every parameter ideal $Q$. See
\cite{SV2} for a detailed theory of Buchsbaum rings and modules.

\medskip

We begin with the following two results, whose proofs are similar to those in the ring case (see {\cite[Lemma 4.5]{chern3}} and  {\cite[Proposition 2.3]{GO}}).

\begin{Lemma}\label{4.5}
Let $(\RR,\m)$ be a Noetherian local ring and $M$ a generalized
Cohen--Macaulay $\RR$--module with $r=\dim_{\RR} M \geq 2$ and $\depth_{\RR}M > 0$.
Let Q be a parameter ideal for $M$ such that $e_1(Q,M)
=-\sum_{i=1}^{r-1} \binom{r-2}{i-1}h^i(M)$. Then $Q\H_\fkm^i(M) =
(0)$ for all $1 \leq i \leq r-1$.
\end{Lemma}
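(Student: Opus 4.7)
The plan is to induct on $r=\dim_{\RR} M\geq 2$, combining the superficial reduction formulas of Proposition~\ref{genhs} with the Schenzel-type lower bound $e_1(Q,M)\geq -\sum_{i=1}^{r-1}\binom{r-2}{i-1}h^i(M)$ of \cite[Lemma 2.4]{GNi} recalled just before the lemma. Since the hypothesis and the conclusion are stable under enlargement of the residue field, I assume $\RR/\m$ is infinite, so that the superficial elements for $M$ with respect to $Q$ form a Zariski open dense subset of the $\RR/\m$--vector space $Q/\m Q$.

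For the base case $r=2$, pick $h\in Q\setminus \m Q$ superficial for $M$ with respect to $Q$; the hypothesis $\depth_{\RR} M>0$ makes $h$ automatically $M$--regular, so Proposition~\ref{genhs}(d) yields $e_1(Q,M)=-\lambda((0):_{\H_\m^1(M)}h)$. The identity $e_1(Q,M)=-h^1(M)$ then forces $(0):_{\H_\m^1(M)}h=\H_\m^1(M)$, i.e., $h\H_\m^1(M)=(0)$. Varying $h$ over a Zariski dense open subset of $Q/\m Q$, the ideal $\a:=Q\cap \ann_{\RR}\H_\m^1(M)$ satisfies $\a+\m Q=Q$, so Nakayama delivers $\a=Q$, whence $Q\H_\m^1(M)=(0)$.

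For the inductive step $r\geq 3$, pick $h\in Q$ superficial and $M$--regular. Then $M/hM$ is generalized Cohen--Macaulay of dimension $r-1$, and the module $N:=(M/hM)/\H_\m^0(M/hM)$ is generalized Cohen--Macaulay of dimension $r-1$ with positive depth and $\H_\m^j(N)\cong \H_\m^j(M/hM)$ for $j\geq 1$. Proposition~\ref{genhs}(a) (applicable since $1<r-1$) and Proposition~\ref{genhs}(b) give $e_1(Q,M)=e_1(Q/h\RR,M/hM)=e_1(Q/h\RR,N)$. The decisive step is a two-sided squeeze. From the long exact sequence of local cohomology for $0\to M\overset{h}{\to} M\to M/hM\to 0$ one extracts, for $1\leq i\leq r-2$, the short exact sequence
\[ 0\to \H_\m^i(M)/h\H_\m^i(M)\to \H_\m^i(M/hM)\to (0):_{\H_\m^{i+1}(M)}h\to 0, \]
so $h^i(M/hM)\leq h^i(M)+h^{i+1}(M)$. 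Pascal's identity $\binom{r-3}{i-1}+\binom{r-3}{i-2}=\binom{r-2}{i-1}$ telescopes these into
\[ \sum_{i=1}^{r-2}\binom{r-3}{i-1}h^i(N)\;\leq\;\sum_{i=1}^{r-1}\binom{r-2}{i-1}h^i(M)\;=\;-e_1(Q,M)\;=\;-e_1(Q/h\RR,N), \]
while \cite[Lemma 2.4]{GNi} applied to $N$ supplies the reverse inequality. Equality is therefore forced at every stage: $h\H_\m^j(M)=(0)$ for each $1\leq j\leq r-1$, and $e_1(Q/h\RR,N)$ saturates the Schenzel bound for $N$, so the inductive hypothesis yields $Q\H_\m^j(N)=Q\H_\m^j(M/hM)=(0)$ for $1\leq j\leq r-2$. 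Since $h$ already annihilates $\H_\m^j(M)$, the long exact sequence collapses to $0\to \H_\m^j(M)\to \H_\m^j(M/hM)\to \H_\m^{j+1}(M)\to 0$ for $1\leq j\leq r-2$, and $Q$ killing the middle term kills both ends, giving $Q\H_\m^j(M)=(0)$ for $1\leq j\leq r-1$.

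The main obstacle is that $M/hM$ need not have positive depth — indeed, once $h\H_\m^1(M)=(0)$ has been established, the long exact sequence forces $\H_\m^0(M/hM)\cong \H_\m^1(M)$, which is typically nonzero — so the inductive hypothesis cannot be applied to $M/hM$ directly. The remedy is to pass to $N$ before invoking induction, using Proposition~\ref{genhs}(b) to confirm that neither $e_1$ nor the cohomology lengths $h^j$ ($j\geq 1$) are disturbed by this quotient. The combinatorial ingredient that closes the squeeze — matching the telescoping sum $\sum\binom{r-3}{i-1}(h^i(M)+h^{i+1}(M))$ exactly with $\sum\binom{r-2}{i-1}h^i(M)$ — is precisely Pascal's identity, and it is what ensures that the Schenzel lower bound for $N$ saturates at exactly the value dictated by the hypothesis on $M$.
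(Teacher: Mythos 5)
Your proof is correct, and it is essentially the module-case transcription of the argument the paper invokes by reference to the ring case in \cite[Lemma 4.5]{chern3}: induction on $r$ via a superficial, $M$-regular $h\in Q\setminus\m Q$, the dimension-two identity $e_1(Q,M)=-\lambda((0):_{\H_\m^1(M)}h)$ plus a Nakayama argument over varying $h$, and for $r\ge 3$ the squeeze between Schenzel's lower bound for $N=(M/hM)/\H_\m^0(M/hM)$ and the Pascal-telescoped inequality $h^i(M/hM)\le h^i(M)+h^{i+1}(M)$, whose forced equalities yield both $h\H_\m^j(M)=(0)$ and the inductive hypothesis for $N$. This is the same chain of (in)equalities that reappears in the proof of Theorem~\ref{4.7}, so no further comment is needed.
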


\medskip

For each $x\in \m$, we put
$\mathrm{U}_M(x) := \bigcup_{n \geq 0}[xM :_M \fkm^n]$.

\begin{Proposition}\label{4.6}
Let $(\RR,\m)$ be a Noetherian local ring and $M$ a generalized
Cohen--Macaulay $\RR$--module with $r= \dim_{\RR} M \geq 3$ and $\depth_{\RR}M >
0$. Let $Q=(x_1, x_2, \dots , x_r)$ be a parameter ideal for $M$. Assume that $(x_1, x_r) \H_\fkm^1(M) = (0)$ and that the parameter ideal $(x_1, x_2, \dots , x_{r-1})$ for the generalized Cohen--Macaulay $\RR$--module $M/\mathrm{U}_M(x_r)$ is standard. Then
$\mathrm{U}_M(x_1) \cap QM =x_1M.$
\end{Proposition}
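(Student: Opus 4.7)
The inclusion $x_1 M \subseteq U_M(x_1) \cap QM$ is obvious. For the reverse, given $y \in U_M(x_1) \cap QM$, I would write $y = \sum_{i=1}^{r} x_i m_i$ and subtract $x_1 m_1 \in x_1 M$, so that it suffices to show: if $y \in (x_2, \ldots, x_r)M \cap U_M(x_1)$, then $y \in x_1 M$.

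I would first record the consequences of the hypotheses. Since $M$ is generalized Cohen--Macaulay with $\depth_{\RR} M > 0$, one has $\Ass_{\RR} M = \Assh_{\RR} M$, so every parameter for $M$ in $\fkm$ --- in particular $x_1$ and $x_r$ --- is $M$--regular. The local cohomology long exact sequence of $0 \to M \overset{x_i}{\to} M \to M/x_i M \to 0$ combined with $(x_1, x_r) \H_{\fkm}^1(M) = (0)$ yields $U_M(x_i)/x_i M \cong \H_{\fkm}^1(M)$ for $i \in \{1, r\}$, and hence $x_r \cdot U_M(x_1) \subseteq x_1 M$ and $x_1 \cdot U_M(x_r) \subseteq x_r M$. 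Setting $N = M/U_M(x_r)$: since $U_M(x_r)/x_r M = \H_{\fkm}^0(M/x_r M)$ has finite length, $N$ is generalized Cohen--Macaulay of dimension $r - 1$ with $\depth_{\RR} N \ge 1$, $x_r N = (0)$, and $(x_1, \ldots, x_{r-1})$ is standard on $N$ by hypothesis.

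The image $\bar y \in N$ lies in $(x_2, \ldots, x_{r-1}) N$ (as $x_r$ vanishes on $N$) and in $U_N(x_1)$ (since $y \in U_M(x_1)$). The key input is the identity $U_N(x_1) \cap (x_1, \ldots, x_{r-1}) N = x_1 N$ for standard parameter ideals on GCM modules of positive depth: by the $d^+$--sequence relations one has $x_1 N :_N x_j^2 = x_1 N :_N x_j$, while Trung's annihilator criterion forces $\fkm \cdot (U_N(x_1)/x_1 N) = (0)$, so that $x_j \cdot U_N(x_1) \subseteq x_1 N$ for each $j \ge 2$; combining these yields the equality (see \cite{T} and \cite[Ch.~I]{SV2}). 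Applying this gives $\bar y \in x_1 N$, so that $y = x_1 u + v$ for some $u \in M$ and $v \in U_M(x_r)$.

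It remains to show $v = y - x_1 u \in x_1 M$, where $v \in U_M(x_1) \cap U_M(x_r) \cap QM$; this is the main obstacle. I would first establish $\fkm \cdot \H_{\fkm}^1(M) = (0)$ via the embedding $\H_{\fkm}^1(M) \hookrightarrow \H_{\fkm}^1(M/x_r M) \cong \H_{\fkm}^1(N)$ (from the long exact sequences attached to $0 \to M \overset{x_r}{\to} M \to M/x_r M \to 0$ and $0 \to \H_{\fkm}^0(M/x_r M) \to M/x_r M \to N \to 0$) combined with Trung's annihilator $\fkm \H_{\fkm}^1(N) = (0)$. The class of $v$ in $U_M(x_1)/x_1 M \cong \H_{\fkm}^1(M)$ is then $\fkm$--annihilated, and writing $v \in QM$ explicitly realizes it inside $(x_2, \ldots, x_r)(M/x_1 M)$. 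Pushing this relation via the canonical surjection $M/x_1 M \twoheadrightarrow N/x_1 N$ places the image of $\bar v$ in $\H_{\fkm}^0(N/x_1 N) \cap (x_2, \ldots, x_{r-1})(N/x_1 N)$, a set forced to be zero by the same $d$--sequence intersection-with-socle manipulation used for the key identity above; lifting back and using $x_1 \cdot U_M(x_r) \subseteq x_r M$ to control the discrepancy through the finite-length piece $U_M(x_r)/x_r M$ then forces $v \in x_1 M$, paralleling the closing step of the ring-case proof \cite[Prop.~2.3]{GO}.
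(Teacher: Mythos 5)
The paper itself does not write out a proof of this Proposition (it appeals to the ring case, \cite[Proposition 2.3]{GO}), so your proposal has to stand on its own. Its first two thirds do: the reduction to $y\in (x_2,\ldots,x_r)M\cap \mathrm{U}_M(x_1)$, the observations $\mathrm{U}_M(x_i)/x_iM\cong \H_\m^1(M)$ for $i\in\{1,r\}$ and $x_r\mathrm{U}_M(x_1)\subseteq x_1M$, $x_1\mathrm{U}_M(x_r)\subseteq x_rM$, and the passage to $N=M/\mathrm{U}_M(x_r)$ where standardness gives $(x_1,\ldots,x_{r-1})\H_\m^0(N/x_1N)=(0)$, hence $\mathrm{U}_N(x_1)\subseteq x_1N:_Nx_2$, which combined with the $d$--sequence intersection property $(x_1N:_Nx_2)\cap(x_1,\ldots,x_{r-1})N=x_1N$ yields $\bar y\in x_1N$ --- all of this is correct and is the natural first move. (One caveat: standardness of a parameter ideal $\q$ on a generalized Cohen--Macaulay module gives $\q\,\H_\m^i=(0)$, not $\m\,\H_\m^i=(0)$; the latter is Buchsbaumness. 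So your claims ``$\fkm\H_\m^1(N)=(0)$'' and ``$\fkm\cdot(\mathrm{U}_N(x_1)/x_1N)=(0)$'' are unjustified, though what you actually use --- annihilation by the $x_j$, hence $Q\H_\m^1(M)=(0)$ --- does follow.)

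The genuine gap is the last step, which you yourself flag as the main obstacle. Having written $y=x_1u+v$ with $v\in \mathrm{U}_M(x_1)\cap\mathrm{U}_M(x_r)\cap QM$, you propose to push the relation into $N/x_1N$ and invoke $\H_\m^0(N/x_1N)\cap(x_2,\ldots,x_{r-1})(N/x_1N)=(0)$. But $v$ lies in $\mathrm{U}_M(x_r)=\ker(M\to N)$ by construction, so its image in $N/x_1N$ is zero before any argument is made; the conclusion you extract is only $v\in x_1M+\mathrm{U}_M(x_r)$, which is where you started. The subsequent sentence --- ``lifting back and using $x_1\mathrm{U}_M(x_r)\subseteq x_rM$ to control the discrepancy \ldots{} forces $v\in x_1M$'' --- is precisely the assertion that needs proof, and no mechanism is supplied. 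Note that the facts at your disposal ($Qv\subseteq x_1M\cap x_rM$, $v\in QM$, $\lambda(\mathrm{U}_M(x_r)/x_rM)<\infty$) do not formally imply $v\in x_1M$: for instance $v$ could a priori lie in $x_rM\setminus x_1M$ while satisfying all of them, since $x_rM\not\subseteq x_1M$. Closing this is exactly the content that distinguishes the Proposition from the easy case where $Q$ itself is already known to be standard (where one could quote $(0:_{M/x_1M}x_r)\cap(x_2,\ldots,x_r)(M/x_1M)=(0)$ directly), and it is the part of \cite[Proposition 2.3]{GO} that must be reproduced; as written, your argument does not establish it.
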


\medskip

We then have the following, which is the key in our argument. The proof is similar to the ring case  {\cite[Theorem 2.1]{GO}} but let us note a brief proof in order to see how we use the previous results Lemma \ref{4.5} and Proposition \ref{4.6}.

\medskip

\begin{Theorem}\label{4.7}
Let $(\RR,\m)$ be a Noetherian local ring and let $M$ be a generalized
Cohen--Macaulay $\RR$--module with $r= \dim_{\RR} M \geq 2$ and $\depth_{\RR}M >
0$.  Let $Q$ be a parameter ideal for $M$. Then the following
conditions are equivalent{\rm \,:}
\begin{enumerate}
\item[{\rm (i)}] $Q$ is a standard parameter ideal for $M${\rm \,;}
\item[{\rm (ii)}] $e_1(Q,M) = -\sum_{i=1}^{r-1} \binom{r-2}{i-1} h^i(M)$.
\end{enumerate}
\end{Theorem}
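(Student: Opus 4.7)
The plan is to handle (1) $\Rightarrow$ (2) by directly invoking Schenzel's formula \cite[Korollar 3.2]{Sch1}, which computes $e_1(Q,M)$ for a standard parameter ideal and matches the right-hand side of (2). The substantive implication (2) $\Rightarrow$ (1) would proceed by induction on $r = \dim_\RR M$, modeled on \cite[Theorem 2.1]{GO}. After passing to $\widehat{\RR}$ and enlarging the residue field, I would choose generators $x_1, \ldots, x_r$ of $Q$ with both $x_1$ and $x_r$ superficial for $M$ and $M$--regular, which is available because $\depth_\RR M > 0$. Since hypothesis (2) saturates the lower bound from \cite[Lemma 2.4]{GNi}, Lemma \ref{4.5} immediately forces $Q \H_\m^i(M) = (0)$ for all $1 \le i \le r-1$.

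For the base case $r = 2$, Proposition \ref{genhs}(d) collapses to $e_1(Q,M) = -\lambda(\H_\m^0(M/x_1 M))$, so (2) yields $\lambda(\H_\m^0(M/x_1 M)) = h^1(M)$. The one-dimensional identity $\lambda(N/qN) - e_0(q,N) = \lambda(\H_\m^0(N))$ applied to $N = M/x_1 M$ with $q = Q/x_1$ then gives $\chi_1(Q;M) = \lambda(M/QM) - e_0(Q,M) = h^1(M) = \mathbb{I}(M)$, so $Q$ is standard by Trung's criterion \cite[Theorem 2.1]{T}.

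For the inductive step $r \ge 3$, set $N' = M/x_1 M$, $V = \H_\m^0(N')$, and $L = N'/V$. The long cohomology sequence of multiplication by $x_1$ together with $Q\H_\m^i(M) = (0)$ identifies $V \cong \H_\m^1(M)$ and makes $L$ an unmixed generalized Cohen--Macaulay $\RR$--module of dimension $r-1 \ge 2$ with positive depth, satisfying $h^i(L) = h^i(M) + h^{i+1}(M)$ for $1 \le i \le r-2$. Proposition \ref{genhs}(a) and Lemma \ref{comparison} give $e_1(Q/x_1, L) = e_1(Q, M)$, and a Pascal-rule rearrangement translates hypothesis (2) for $M$ into the analogous hypothesis for $L$, so the induction hypothesis forces $Q/x_1$ to be standard for $L$. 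Running the same reduction with $x_r$ instead of $x_1$ shows that $(x_1, \ldots, x_{r-1})$ is a standard parameter ideal for $M/\mathrm{U}_M(x_r)$.

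Having verified $(x_1, x_r)\H_\m^1(M) = (0)$ and the standardness of $(x_1, \ldots, x_{r-1})$ on $M/\mathrm{U}_M(x_r)$, Proposition \ref{4.6} now delivers $\mathrm{U}_M(x_1) \cap QM = x_1 M$, which modulo $x_1$ reads $V \cap (Q/x_1) N' = (0)$. This forces additivity of $\chi_0$ (and hence of $\chi_1$) across $0 \to V \to N' \to L \to 0$. Combined with the elementary computation $\chi_1(Q/x_1; V) = \lambda(V)$ (the Koszul differentials vanish because $(Q/x_1)V = (0)$) and the induction-supplied value $\chi_1(Q/x_1; L) = \mathbb{I}(L)$, one gets $\chi_1(Q/x_1; N') = h^1(M) + \mathbb{I}(L)$; a further binomial identity reduces this to $\mathbb{I}(M)$. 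Since the $M$--regularity of $x_1$ makes $\chi_1(Q;M) = \chi_1(Q/x_1; N')$ via the Koszul homology collapse $\rmH_i(Q;M) \cong \rmH_i(Q/x_1; N')$, we conclude $\chi_1(Q;M) = \mathbb{I}(M)$ and Trung's criterion closes the argument. The main obstacle is the circular-feeling coordination of two independent applications of the induction hypothesis, one along $x_1$ for $L$ and one along $x_r$ for $M/\mathrm{U}_M(x_r)$, which together unlock Proposition \ref{4.6}: that proposition is the real engine that lifts standardness from dimension $r-1$ back up to $M$.
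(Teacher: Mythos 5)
Your proposal is correct and follows the paper's proof almost step for step: the reduction via Lemma \ref{4.5}, the induction on $r$ through the quotients $M/\mathrm{U}_M(x_j)$ for $j=1$ and $j=r$ (with the same Pascal-rule bookkeeping $h^i(L)=h^i(M)+h^{i+1}(M)$ coming from the short exact sequences $0\to \H_\m^i(M)\to \H_\m^i(M/x_1M)\to \H_\m^{i+1}(M)\to 0$), and Proposition \ref{4.6} to obtain $\mathrm{U}_M(x_1)\cap QM=x_1M$. The only real divergence is the final lifting step: the paper quotes Trung's Corollaries 2.3 and 2.4 to conclude first that $Q_1$ is standard for $M/x_1M$ and then that $Q$ is standard for $M$, whereas you translate $\mathrm{U}_M(x_1)\cap QM=x_1M$ into $V\cap QN'=(0)$, use that to make $\chi_1$ additive along $0\to V\to N'\to L\to 0$, and verify Trung's numerical criterion $\chi_1(Q;M)={\mathbb I}(M)$ by direct computation. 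That ending is valid and has the merit of making explicit why Proposition \ref{4.6} is the engine that lifts standardness back up to $M$. One small caution: the ``one-dimensional identity'' $\lambda(N/qN)-e_0(q,N)=\lambda(\H_\m^0(N))$ invoked in your base case is in general only an inequality, since $\lambda(N/qN)-e_0(q,N)=\lambda((0):_N y)\le\lambda(\H_\m^0(N))$ with equality exactly when $q$ annihilates $\H_\m^0(N)$; here that holds because Lemma \ref{4.5} gives $Q\H_\m^1(M)=(0)$ and $\H_\m^0(M/x_1M)\cong(0):_{\H_\m^1(M)}x_1$, and this justification should be stated.
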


\begin{proof} We have only to show the implication (ii) $\Rightarrow$ (i). To do this, we may assume that the residue class field
$\RR/\fkm$ of $\RR$ is infinite. We write $Q = (x_1, x_2, \dots, x_r)$, where each
$x_j$ is superficial for $M$ with respect to $Q$. Remember that by Lemma \ref{4.5}   $Q \H_\fkm^i
(M) = (0)$ for all $i \ne r$. Hence $Q$ is
standard, if $r=2$ (\cite[Corollary 3.7]{T}).

Assume that $r \geq 3$
and that our assertion holds true for $r-1$. Let $1 \le j \le r$ be an integer. We put $N=M /x_j M$, $\overline{M} = N/\H_\fkm^0(N)~(=M/\mathrm{U}_M(x_j))$, and $Q_j = (x_i \mid 1 \le i \le r, i \ne j)$. Then
$\H_\fkm^i(N) \cong \H_\fkm^i(\overline{M})$ for all $i \geq 1$. On
the other hand, since $x_j \H_\fkm^i(M) = (0)$ for $i \ne r$ and
$x_j$ is $M$-regular, for each $0 \leq i \leq r-2$ we have the short
exact sequence
\[0 \to \H_\fkm^i(M) \to
\H_\fkm^i(N) \to \H_\fkm^{i+1} (M) \to 0\]
 of local cohomology modules. Hence
${\mathbb I}(M) = {\mathbb I} (N)$ and
\begin{eqnarray*}
e_1(Q,M) = e_1(Q_j,N) & = & e_1(Q_j,\overline {M}) \\ &\geq&
- \sum_{i=1}^{r-2} \binom{r-3}{i-1} h^i(\overline{M}) \\ & = & -
\sum_{i=1}^{r-2} \binom{r-3}{i-1} h^i(N) \\ & = & - \sum_{i=1}^{r-2}
\binom{r-3}{i-1} [h^i(M) + h^{i+1} (M)] \\ & = & - \sum_{i=1}^{r-1}
\binom{r-2}{i-1} h^i(M) \\ & = & e_1(Q,M),
\end{eqnarray*}
so that  the equality
\[ e_1(Q_j,\overline {M}) = - \sum_{i=1}^{r-2}
\binom{r-3}{i-1} h^i(\overline{M})\] holds for the parameter
ideal ${Q_j}$ for the generalized Cohen--Macaulay $\RR$--module
$\overline{M} = M/\mathrm{U}_M(x_j)$. Thus  by the hypothesis of induction on $r=\dim_{\RR}M$, $Q_j$ is a standard parameter ideal for $M/\mathrm{U}_M(x_j)$ for every $1 \leq j \leq r$. Hence
$\mathrm{U}_M (x_1) \cap QM = x_1 M$ by Proposition \ref{4.6}. Thus $Q_1$
is standard parameter ideal for $M/x_1 M$ (\cite[Corollary 2.3]{T}).
Therefore $Q$ is a standard parameter ideal for $M$, because ${\mathbb
I}(M) = {\mathbb I} (M/x_1M)$ (\cite[Corollary 2.4]{T}).
\end{proof}

We are now ready to prove the main result of this section.

\begin{Theorem}\label{4.1}
Let $(\RR,\m)$ be a Noetherian local ring and $M$ an unmixed
$\RR$--module with $r= \dim_{\RR} M \geq 2$.  Then the following
conditions are equivalent{\rm \,:}
\begin{enumerate}
\item[{\rm (i)}] $M$ is a Buchsbaum $\RR$--module{\rm \,;}
\item[{\rm (ii)}] The
first Hilbert coefficient $e_1(Q,M)$ of $M$ is constant and independent of the choice of parameter ideals $Q$ for $M$.
\end{enumerate}
When this is the case, one has the equality
\[e_1(Q,M) =
-\sum_{i=1}^{r-1} \binom{r-2}{i-1}h^i(M)\] for every parameter ideal
$Q$ for $M$, where $h^i(M) = \lambda (\H_\fkm^i(M))$ for each $1 \leq i \leq r-1$.
\end{Theorem}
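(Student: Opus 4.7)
The direction (1)$\Rightarrow$(2) is immediate: if $M$ is Buchsbaum then by definition every parameter ideal for $M$ is standard, and by Schenzel's formula cited as \cite[Korollar 3.2]{Sch1} just before Theorem \ref{4.7} one has $e_1(Q,M) = -\sum_{i=1}^{r-1}\binom{r-2}{i-1}h^i(M)$ for every such $Q$. In particular $e_1(Q,M)$ is constant, and the closing formula of the theorem is obtained.

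For the nontrivial direction (2)$\Rightarrow$(1), my plan is to reduce to Theorem \ref{4.7}. First I would pass to the $\m$-adic completion: by flatness of completion one has $e_1(Q,M) = e_1(Q\widehat{\RR},\widehat{M})$ for every parameter ideal $Q$ for $M$, and every parameter ideal for $\widehat{M}$ arises from one for $M$ up to a harmless infinite residue-field extension, so $\Lambda(M) = \Lambda(\widehat{M})$. Unmixedness of $M$ is by definition a statement about $\widehat{M}$, and Buchsbaumness descends from $\widehat{M}$ to $M$, so we may assume that $\RR$ is complete.

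Next I would verify the hypotheses of Theorem \ref{4.7}. Since $\Lambda(M)$ is a singleton it is finite, and since $M$ is unmixed we have $\mathrm{U}_{M}(0) = (0)$. Hence Theorem \ref{genCMchern3} yields that $M$ is a generalized Cohen--Macaulay $\RR$-module. Moreover unmixedness and $r \ge 2$ force every $\p \in \Ass_\RR M$ to satisfy $\dim \RR/\p = r \ge 2$, so $\m \notin \Ass_\RR M$ and $\depth_\RR M > 0$. Thus Theorem \ref{4.7} is available for every parameter ideal for $M$.

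Finally I would conclude as follows. Because $M$ is generalized Cohen--Macaulay, standard parameter ideals exist in abundance --- for instance, any parameter ideal contained in a sufficiently high power $\m^N$ of the maximal ideal annihilates each $\H_\m^i(M)$ with $i < r$ and is standard. Pick one such $Q_0$; by Schenzel's formula, $e_1(Q_0,M) = -\sum_{i=1}^{r-1}\binom{r-2}{i-1}h^i(M)$. By the constancy assumption, every parameter ideal $Q$ for $M$ satisfies $e_1(Q,M) = e_1(Q_0,M)$, so $Q$ attains the extremal value and Theorem \ref{4.7} implies that $Q$ is standard. Since this holds for every $Q$, $M$ is Buchsbaum, and the displayed formula falls out of either Schenzel's formula or the constancy applied to $Q_0$.

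The technical core --- the inductive argument that the extremal value of $e_1(Q,M)$ forces $Q$ to be standard --- is already encapsulated in Theorem \ref{4.7} via Lemma \ref{4.5} and Proposition \ref{4.6}. The only substantive additional step is the passage from the hypothesis ``$\Lambda(M)$ is a singleton'' to ``$M$ is generalized Cohen--Macaulay with positive depth,'' which I would expect to be the main conceptual obstacle; it is handled cleanly by combining the unmixedness with Theorem \ref{genCMchern3}.
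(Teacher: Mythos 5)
Your proposal is correct and follows essentially the same route as the paper: establish generalized Cohen--Macaulayness from the finiteness of $\Lambda(M)$ together with unmixedness (the paper invokes Proposition~\ref{flc} directly rather than Theorem~\ref{genCMchern3}, but these rest on the same lemma), identify the constant value with Schenzel's $-\sum_{i=1}^{r-1}\binom{r-2}{i-1}h^i(M)$ via an existing standard parameter ideal, and then apply Theorem~\ref{4.7} to conclude every parameter ideal is standard. Your explicit verification that unmixedness with $r\ge 2$ gives $\depth_{\RR}M>0$ (a hypothesis of Theorem~\ref{4.7} the paper leaves tacit) is a welcome touch but does not change the argument.
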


\begin{proof} (i) $\Rightarrow$ (ii) This is due to Schenzel \cite{Sch1}.

\medskip

\noindent (ii) $\Rightarrow$ (i)  Since $\sharp{\Lambda(M)} = 1$, by Proposition \ref{flc} $M$ is
a generalized Cohen--Macaulay $\RR$--module. Hence
$\Lambda (M) = \{-\sum_{i=1}^{r-1}\binom{r-2}{i-1}h^i(M)\}$ by
\cite[Korollar 3.2]{Sch1}, so that by Theorem \ref{4.7} every
parameter ideal $Q$ for $M$ is standard. Thus $M$ is,  by definition, a Buchsbaum
$\RR$--module (\cite{SV}).

\medskip

\noindent See \cite{Sch1} for the last assertion.
\end{proof}

\medskip

We now in a position to conclude this section with a characterization of $\RR$--modules
possessing $\sharp{\Lambda(M)} =1$.

\medskip

\begin{Theorem}
Let $(\RR,\m)$ be a Noetherian local ring and $M$  a finitely
generated $\RR$--module with $r = \dim_{\RR} M \geq 2$. Let $U=\mathrm{U}_{\widehat{M}}(0)$ be
the unmixed component of $(0)$ in the $\fkm$-adic completion
$\widehat{M}$ of $M$. Then the following
 conditions are equivalent{\rm \,:}
\begin{enumerate}
\item[{\rm (i)}]
$\sharp{\Lambda(M)} = 1${\rm \,;}
\item[{\rm (ii)}] 
$\widehat{M}/U$ is a Buchsbaum $\widehat{\RR}$--module and $\dim_{\widehat{\RR}} U \leq r-2$.
\end{enumerate}
When this is the case, one has the equality $$e_1(Q,M) =
-\sum_{i=1}^{r-1} \binom{r-2}{i-1} h^i(\widehat{M}/U)$$ for every
parameter ideal $Q$ for $M$.
\end{Theorem}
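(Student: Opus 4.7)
The plan is to reduce to the unmixed case via the unmixed component and then invoke the Buchsbaum characterization already established in Theorem~\ref{4.1}. Since $\Lambda(M)=\Lambda(\widehat{M})$, we may replace $M$ by $\widehat{M}$ and $\RR$ by $\widehat{\RR}$, so we may assume $\RR$ is complete and $M = \widehat{M}$; in particular the unmixed component $U = \mathrm{U}_M(0)$ is an $\RR$-submodule of $M$, and $N := M/U$ is an unmixed $\RR$--module with $\dim_{\RR} N = r$.

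For the implication (1) $\Rightarrow$ (2), observe that $\sharp \Lambda(M) = 1$ forces $\Lambda(M)$ finite, so by Theorem~\ref{genCMchern3} we conclude $\dim_{\RR} U \le r-2$ and $N$ is a generalized Cohen--Macaulay $\RR$--module. Because $\dim_{\RR} U \le r-2$, Remark~\ref{e1geqt} gives $\Lambda(M) = \Lambda(N)$, so $\sharp \Lambda(N) = 1$. Since $N$ is unmixed of dimension $r \ge 2$, Theorem~\ref{4.1} now applies and yields that $N$ is a Buchsbaum $\RR$--module.

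For the implication (2) $\Rightarrow$ (1), suppose $\dim_{\RR} U \le r-2$ and $N = M/U$ is Buchsbaum. Then Theorem~\ref{4.1} guarantees that
\[
e_1(Q',N) = -\sum_{i=1}^{r-1} \binom{r-2}{i-1} h^i(N)
\]
for every parameter ideal $Q'$ for $N$; in particular $\sharp \Lambda(N) = 1$. Using Remark~\ref{e1geqt} once more (which is permitted since $\dim_{\RR} U \le r-2$), we obtain $\Lambda(M) = \Lambda(N)$, whence $\sharp \Lambda(M) = 1$.

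The explicit formula in the last assertion is then a direct consequence: for any parameter ideal $Q$ for $M$, Lemma~\ref{comparison}(c) (applied in the case $\dim_{\RR} U \le r-2$) yields $e_1(Q,M) = e_1(Q,N)$, and the Buchsbaum formula for $N$ recorded in Theorem~\ref{4.1} gives the claimed value $-\sum_{i=1}^{r-1}\binom{r-2}{i-1}h^i(\widehat{M}/U)$. No single step is a genuine obstacle here, since all the heavy lifting has already been done: the main point is simply to recognize that the two reductions---``$\Lambda(M)$ finite forces $\dim U \le r-2$'' and ``$\Lambda(M) = \Lambda(M/U)$ when $\dim U \le r-2$''---combine to transport the problem to the unmixed setting, where Theorem~\ref{4.1} is exactly what is needed.
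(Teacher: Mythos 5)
Your proposal is correct and follows essentially the same route as the paper's proof: reduce to the complete case, deduce $\dim_{\widehat{\RR}} U \le r-2$ from the finiteness of $\Lambda(M)$ (the paper cites Lemma~\ref{e1finite} directly, whereas you route this through Theorem~\ref{genCMchern3}, whose proof is exactly that lemma), transfer the problem to the unmixed quotient $N=\widehat{M}/U$ via Remark~\ref{e1geqt} and Lemma~\ref{comparison}, and then invoke Theorem~\ref{4.1}. The only differences are which of the already-established intermediate results you choose to cite at each step, so there is nothing substantive to change.
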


\begin{proof} We may assume $\RR$ is complete.

\noindent (i) $\Rightarrow$ (ii) Since $\sharp{\Lambda(M)}=1$, $\dim_{\RR} U \leq
r-2$ by
Proposition \ref{e1finite}.  We get $\sharp{\Lambda(M/U)}=1$ by Remark \ref{e1geqt}, so that  by Theorem \ref{4.1} $M/U$ is a
Buchsbaum $\RR$-module.

\medskip

\noindent (ii) $\Rightarrow$ (i) We get by Theorem \ref{4.1} that
$\sharp{\Lambda(M/U)}=1$ and hence $\sharp{\Lambda(M)}=1$ by
Lemma \ref{comparison}.

\medskip

\noindent See Theorem \ref{4.1} for the last assertion.
\end{proof}

\section{Homological degrees}

In this section we deal with the variation of the extended degree
function $\hdeg$ (\cite{DGV, hdeg}), labeled $\hdeg_I$ (see
\cite{Linh},
\cite[p. 142]{icbook}). We recall the basic properties of these
functions. These techniques and their relationships to $e_1(I)$ have
been mentioned in \cite{chern1} but the treatment here is more focused. It will lead to sharper bounds in the case of $e_{1}(I, M)$.

\subsubsection*{Cohomological degrees}
Let $\RR$ be a Noetherian local ring with maximal ideal $\m$ and infinite residue class field. Let
$\mathcal{M}(\RR)$ denote the category of finitely generated $\RR$-modules and let $I$ be an $\m$--primary ideal of $\RR$.
Then one has the following extension of the classical multiplicity.

\begin{definition}{\rm A {\em cohomological
degree}, or {\em extended
multiplicity function} with respect to $I$,
 is a function \label{Degnu}
\[\Deg(\cdot): {\mathcal M}(\RR) \to {\mathbb N}\]
that satisfies the following conditions. Let $M \in \mathcal{M}(\RR)$.
\begin{itemize}
\item[\rm {(a)}]  If $L = \Gamma_{\mathfrak m}(M)$ is the $\RR$-submodule of
elements of $M$ that are annihilated by a
power of the maximal ideal $\m$ and $\overline{M} = M/L$, then
\[ \Deg(M) = \Deg(\overline{M}) + \lambda(L). \]
\item[\rm {(b)}] (Bertini's rule)  If $M$ has positive depth, then
\[ \Deg(M) \geq \Deg(M/hM) \] for every generic hyperplane section  $h\in I \setminus  \mathfrak{m}I$.
\item[{\rm (c)}] (The calibration rule) If $M$ is a Cohen-Macaulay $\RR$--module, then
\[\Deg(M) = \deg(M), \]
where $\deg(M)=e_0(I,M)$ is the Samuel multiplicity of $M$ with respect to $I$.
\end{itemize}
}\end{definition}

\medskip

 The existence of cohomological degrees in arbitrary
dimensions was established in \cite{hdeg}. Let us formulate it for the case where the ring $\RR$ is complete.
The use of the more general Samuel multiplicities was introduced in \cite{Linh}.
When precision demands,   we denote the degree and homological degree 
  functions associated to the $\m$-primary ideal $I$
   by $\deg_I$ and $\hdeg_I$, respectively.

\medskip

For the rest of this section suppose that $\RR$ is complete. For each finitely generated $\RR$--module $M$ and $j \in \Bbb Z$ let $$M_j = \Hom_{\RR}(\rmH_\m^j(M), E),$$ where $E = \mathrm{E}_{\RR}(\RR/\m)$ denotes the injective envelope of the residue class field. Then, thanks to the local duality theorem, one gets $\dim_{\RR} M_j \le j$ for all $j \in \Bbb Z$.

\begin{definition}\label{hdegdef}{\rm Let $M$ be a finitely generated $\RR$-module with $r = \dim_{\RR}M > 0$. Then the {\em homological degree}\index{homological degree}\index{hdeg,
the homological degree} of $M$\label{hdegnu} is the integer
\[
\hdeg (M) = \deg(M) + \sum\limits_{j = 0}^{r-1} {{r-1}\choose{j}}\cdot  \hdeg (M_j).
\]
}\end{definition}

We call attention to the fact (see \cite{hdeg} for details) that the
notion of generic  hyperplane section used for $\hdeg(M)$ are
superficial elements for $M$ and for all $M_j$, but also for the
iterated ones of these modules (there are only a finite
number of them).

\medskip

 We will employ $\hdeg$ to derive lower bounds for
$e_1(I,M)$.

\subsubsection*{Homological torsion}\index{homological torsion}
There are
other combinatorial expressions of the terms $\hdeg_I(M_j)$ that
behave well under hyperplane sections.

\begin{definition}\label{6.3} {\rm
Let $M$ be an $\RR$-module with $r=\dim_{\RR}M \ge 2$. For each integer $1 \le  i \le r-1$ we put
\[ \TT_I^{(i)}(M)
=
  \sum\limits_{j=1}^{r-i} {{r-i-1}\choose{j-1}}\cdot
 \hdeg_I(M_j).\]
Hence
\[ \hdeg_I(M)> \TT_I^{(1)}(M) \geq \TT_I^{(2)}(M) \geq \cdots \geq \TT_I^{(r-1)}(M).\]
If $M$ is a generalized Cohen-Macaulay $\RR$--module, then
\[ \TT_I^{(i)}(M) = \sum_{j=1}^{r-i}\binom{r-i - 1}{j-1}\lambda(\rmH^j_{\mathfrak{m}}(M))\]
which is independent of $I$.
}
\end{definition}

We then have  the following.

\begin{Theorem}\label{torsionhdeg}{\rm {\cite[Theorem 2.13]{hdeg}}} Let $M$ be a finitely generated $\RR$--module with $r = \dim_\RR M$ and let
$h$ be a generic hyperplane section. Then
 $\TT_I^{(i)}(M/hM)\leq \TT_I^{(i)}(M)$ for all $1 \le i \le r-2 $.
\end{Theorem}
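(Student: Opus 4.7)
The plan is to propagate information from $M$ to $M/hM$ through the local-cohomology long exact sequence induced by $h$, transport this via Matlis duality to statements about the modules $M_j = \Hom_{\RR}(\rmH_\m^j(M), E)$, and finally reassemble the bounds using Pascal's identity on the binomial coefficients appearing in $\TT_I^{(i)}$.

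First, since every sum defining $\TT_I^{(i)}$ starts at $j = 1$, the summands $M_0$ and $(M/hM)_0$ never enter, so replacing $M$ by $M/\rmH_\m^0(M)$ does not change either $\TT_I^{(i)}(M)$ or $\TT_I^{(i)}(M/hM)$ for $1 \le i \le r-2$. I may therefore assume $\depth M > 0$ and that $h$ is $M$-regular. The short exact sequence $0 \to M \xrightarrow{h} M \to M/hM \to 0$ then produces, for each $j \ge 0$, the short exact sequence
\[
0 \to \rmH_\m^j(M)/h\rmH_\m^j(M) \to \rmH_\m^j(M/hM) \to \bigl(0 :_{\rmH_\m^{j+1}(M)} h\bigr) \to 0,
\]
and Matlis duality (exact, interchanging $X/hX$ and $0:_{X^\vee} h$) converts this into
\[
0 \to M_{j+1}/hM_{j+1} \to (M/hM)_j \to (0 :_{M_j} h) \to 0.
\]

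Next, I would apply the sub-additivity of $\hdeg_I$ on this last sequence together with Bertini's rule. Sub-additivity gives $\hdeg_I((M/hM)_j) \le \hdeg_I(M_{j+1}/hM_{j+1}) + \hdeg_I(0:_{M_j} h)$, and Bertini implies $\hdeg_I(M_{j+1}/hM_{j+1}) \le \hdeg_I(M_{j+1})$. Because $h$ is generic enough to be superficial for $M$, for each $M_j$, and for all iterated modules entering the recursive Definition~\ref{hdegdef} of $\hdeg_I$, $h$ is a non-zerodivisor on $M_j/\rmH_\m^0(M_j)$; hence $0:_{M_j} h \subseteq \rmH_\m^0(M_j)$ has finite length, bounded by $\lambda(\rmH_\m^0(M_j)) \le \hdeg_I(M_j)$ via the torsion-splitting rule (a) of Definition~\ref{Degnu}. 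Combining these observations yields the pointwise estimate
\[
\hdeg_I\bigl((M/hM)_j\bigr) \le \hdeg_I(M_j) + \hdeg_I(M_{j+1}), \qquad 1 \le j \le r-i-1.
\]

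Finally, Pascal's identity $\binom{r-i-1}{j-1} = \binom{r-i-2}{j-1} + \binom{r-i-2}{j-2}$, after reindexing the second half by $j \mapsto j+1$, rewrites the sum defining $\TT_I^{(i)}(M)$ as
\[
\TT_I^{(i)}(M) = \sum_{j=1}^{r-i-1} \binom{r-i-2}{j-1} \bigl[\hdeg_I(M_j) + \hdeg_I(M_{j+1})\bigr],
\]
which carries exactly the binomial weights appearing in
\[
\TT_I^{(i)}(M/hM) = \sum_{j=1}^{r-i-1} \binom{r-i-2}{j-1} \hdeg_I\bigl((M/hM)_j\bigr).
\]
Term-by-term comparison with the pointwise estimate yields $\TT_I^{(i)}(M/hM) \le \TT_I^{(i)}(M)$. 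The principal obstacle is verifying the sub-additivity of $\hdeg_I$ on arbitrary short exact sequences and arranging the iterated genericity of $h$ required by Definition~\ref{hdegdef} simultaneously for $M$ and for all the auxiliary modules $M_j$; once those structural properties of $\hdeg_I$ are in place, the remainder is combinatorial bookkeeping.
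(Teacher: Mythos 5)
The paper does not prove this statement; it is quoted verbatim from \cite[Theorem 2.13]{hdeg}, so your attempt can only be measured against the argument there. Your skeleton is the right one and is essentially the one used in \cite{hdeg}: dualize the long exact sequence of local cohomology induced by $h$ to get
$0 \to M_{j+1}/hM_{j+1} \to (M/hM)_j \to (0:_{M_j}h) \to 0$, bound $\hdeg_I$ of the middle term by the two ends, and let Pascal's identity absorb the shift $j\mapsto j+1$ into the binomial weights. The indexing and the reduction to $\depth M>0$ are both correct.

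The gap is exactly where you say it is, and it is not a formality --- it is the entire technical content of the theorem. First, ``sub-additivity of $\hdeg_I$ on arbitrary short exact sequences'' is not one of the axioms of a cohomological degree and is not available in general; what your argument actually needs is the special case that for $0\to A\to B\to C\to 0$ with $\lambda(C)<\infty$ one has $\hdeg_I(B)\le \hdeg_I(A)+\lambda(C)$. This is true but must be proved: one has to track that $B_j\cong A_j$ for $j\ge 2$, that $B_1$ is a finite-colength submodule of $A_1$ (and that $\hdeg_I$ does not increase under such inclusions in dimension $\le 1$), and that $\lambda(\Gamma_\m(B))\le\lambda(\Gamma_\m(A))+\lambda(C)$. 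Second, rule (b) of Definition~6.1 gives $\Deg(M/hM)\le\Deg(M)$ only when $M$ has \emph{positive depth}, whereas the modules $M_{j+1}$ typically have depth zero; the inequality $\hdeg_I(M_{j+1}/hM_{j+1})\le\hdeg_I(M_{j+1})$ you invoke is therefore itself an instance of the hyperplane-section theorem for $\hdeg_I$ in lower dimension. In \cite{hdeg} this is handled by proving $\hdeg(M/hM)\le\hdeg(M)$ and the torsion inequalities $\TT^{(i)}_I(M/hM)\le\TT^{(i)}_I(M)$ together, by simultaneous induction on $\dim_\RR M$, applying the inductive hypothesis to the modules $M_j$ (which have dimension $\le j<r$). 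Your write-up defers both points, so as it stands the proof establishes only the combinatorial bookkeeping and not the two inequalities that make it work; to complete it you should state and prove the finite-length sub-additivity lemma and recast the whole argument as an induction on $r$ whose hypothesis covers the modules $M_j$.
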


We now turn this into a uniform bound for the first Hilbert coefficient of
a module $M$ relative to an ideal $I$ generated by a system of parameters of $M$. We note that there are 
general bounds for all Hilbert coefficients $e_i(I,M)$ for arbitrary $\m$-primary ideals $I$ (\cite{RTV}).
Those developed here have a more specialized character and hold only for $e_1(I,M)$ and parameter ideals $I$.

\begin{Theorem} \label{Degreddim2}
 Let $M$ be a finitely generated $\RR$--module with $\dim_\RR M = \dim \RR \geq 2$ and let $Q$ be a parameter ideal of $\RR$. Then
\[ -e_{1}(Q, M) \leq \TT_{Q}^{(1)}(M).  \]
\end{Theorem}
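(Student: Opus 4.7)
The plan is to induct on $r = \dim_{\RR} M \ge 2$; note that both sides are nonnegative, the left-hand side by Corollary~\ref{e1par} and the right-hand side trivially. Throughout, choose $h \in Q \setminus \m Q$ that is a generic hyperplane section for $M$ in the sense used to define $\hdeg$, so $h$ is simultaneously superficial for $M$ and for each iterated Matlis dual $M_j$; such $h$ exists because $\RR/\m$ is infinite and each $M_j$ is finitely generated (as $\rmH_\m^j(M)$ is Artinian and $\RR$ is complete).

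\emph{Base case $r = 2$.} Proposition~\ref{genhs}(d) gives
$$-e_1(Q, M) = \lambda\bigl((0):_{\rmH_\m^1(M)} h\bigr),$$
and Matlis duality (using completeness of $\RR$) identifies this with $\lambda(M_1/hM_1)$, since multiplication by $h$ on $\rmH_\m^1(M)$ is dual to multiplication by $h$ on $M_1$. Because $r = 2$ forces $\TT_Q^{(1)}(M) = \hdeg_Q(M_1)$, it suffices to prove $\lambda(M_1/hM_1) \le \hdeg_Q(M_1)$. Put $L = \rmH_\m^0(M_1)$ and $N = M_1/L$; Definition~\ref{Degnu}(a) gives $\hdeg_Q(M_1) = \lambda(L) + \hdeg_Q(N)$, while the snake lemma applied to multiplication by $h$ on $0 \to L \to M_1 \to N \to 0$, combined with the $h$-regularity of $N$, yields
$$\lambda(M_1/hM_1) = \lambda(L/hL) + \lambda(N/hN) \le \lambda(L) + \lambda(N/hN).$$
If $N = 0$ we are done; otherwise $\dim N \le 1$ with positive depth, so $N$ is Cohen--Macaulay and the calibration rule gives $\hdeg_Q(N) = e_0(Q, N) \ge e_0((h), N) = \lambda(N/hN)$.

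\emph{Inductive step $r \ge 3$.} Proposition~\ref{genhs}(a) at index $1 < r-1$ yields $e_1(Q, M) = e_1(Q, M/hM)$, and $\dim_{\RR} M/hM = r - 1 \ge 2$, so the inductive hypothesis gives $-e_1(Q, M/hM) \le \TT_Q^{(1)}(M/hM)$. Theorem~\ref{torsionhdeg} applied with $i = 1 \le r-2$ then furnishes $\TT_Q^{(1)}(M/hM) \le \TT_Q^{(1)}(M)$, and stringing the two inequalities together closes the induction.

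The principal obstacle is the base case $r = 2$: one must pass through Matlis duality to convert the cohomological quantity $\lambda((0):_{\rmH_\m^1(M)} h)$ into the length of the quotient $\lambda(M_1/hM_1)$, and then bound this length by $\hdeg_Q(M_1)$ via the torsion/torsion-free decomposition of $M_1$ and the calibration rule. The inductive step is, by contrast, a clean combination of the hyperplane-section formula $e_1(Q, M) = e_1(Q, M/hM)$ with the analogous monotonicity of $\TT^{(1)}$ recorded in Theorem~\ref{torsionhdeg}.
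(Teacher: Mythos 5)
Your proof is correct and follows essentially the same route as the paper: induction on dimension via a generic hyperplane section, with the base case $r=2$ handled by Proposition~\ref{genhs}\,(d) plus Matlis duality and the inductive step by Proposition~\ref{genhs}\,(a) together with Theorem~\ref{torsionhdeg}. The only (harmless) deviation is in the base case, where the paper bounds $\lambda\bigl((0):_{\rmH_\m^1(M)}h\bigr)\le\hdeg_Q(M_1/hM_1)\le\hdeg_Q(M_1)$ by citing the hyperplane-section monotonicity of $\hdeg$, while you re-derive that bound directly from the decomposition $0\to \rmH_\m^0(M_1)\to M_1\to N\to 0$ and the calibration rule.
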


\demo Let $d = \dim \RR$ and let $h \in Q \setminus \m Q$ be a generic hyperplane section used for $\hdeg_{Q}(M)$. Since
\[ -e_{1}(Q, M) = -e_{1}(Q, M/\rmH^{0}_{\m}(M)) \;\; \mbox{\rm and} \;\; \TT_{Q}^{(1)}(M/\rmH^{0}_{\m}(M)) \leq \TT_{Q}^{(1)}(M),  \]
replacing $M$ with $M/\rmH^{0}_{\m}(M)$ if necessary, we may assume $\depth_\RR M \ge 1$. We may also assume that $h$ is superficial for $M$ and for all $M_j$~($0 \le j \le d-1)$ with respect to $Q$. Hence $h$ is $M$--regular and $\l (M_1/hM_1) < \infty$ (remember that $\dim_\RR M_1 \le 1$).  Suppose $d = 2$. Then $\TT_{Q}^{(1)}(M) = \hdeg_{Q}(M_1)$ and $- e_{1}(Q, M) = \l((0):_{{\rmH}_\m^1(M)} h)$ by Proposition \ref{genhs} (d). On the other hand, from the exact sequence
\[ 0 \longrightarrow M \stackrel{h}{\longrightarrow} M \longrightarrow M/hM \longrightarrow 0 \]
of $\RR$--modules, we obtain the exact sequence
\[ 0 \to (0):_{{\rmH}_\m^1(M)} h \to \rmH_\m^1(M) \overset{h}{\to} \rmH_\m^1(M). \]
Then, taking the Matlis dual, we have an epimorphism
$$M_1/hM_1 \to \Hom_{\RR}((0):_{{\rmH}_\m^1(M)} h, E) \to 0,$$ so that
$$\l ((0):_{{\rmH}_\m^1(M)} h) = \hdeg_Q(\Hom_{\RR}((0):_{{\rmH}_\m^1(M)} h, E))\le \hdeg_Q(M_1/hM_1) \le \hdeg_Q(M_1)$$ by Theorem \ref{torsionhdeg}. Thus $-e_1(Q,M) \le \TT_Q^{(1)}(M)$. If $d \geq 3$, then we get
\[\TT_{Q/(h)}^{(1)}(M/hM) \leq \TT_{Q}^{(1)}(M). \]
Hence the result follows, since  $-e_{1}(Q, M) = -e_{1}(Q/(h), M/hM)$ by Proposition \ref{genhs} (a). \QED

\medskip

When the module is generalized Cohen-Macaulay we recover the bound discussed at the beginning of Section 4.

\medskip

\begin{Corollary}\label{6.7}
If $M$ is a generalized
Cohen-Macaulay $\RR$--module with $\dim_{\RR}M \ge 1$, then the Hilbert coefficients $e_1(Q,M)$
are bounded for all parameter ideals $Q$ for $M$.
\end{Corollary}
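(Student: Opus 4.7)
The plan is to combine Theorem~\ref{Degreddim2} with the generalized Cohen--Macaulay version of Definition~\ref{6.3} and with Corollary~\ref{e1par}. The point is that for generalized Cohen--Macaulay modules the quantity $\TT_Q^{(1)}(M)$ is in fact a finite invariant of $M$ alone, not of $Q$.

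First I would reduce to the case $\dim_\RR M = \dim \RR \geq 2$. Passing to $\RR/\ann_\RR M$ we may assume $\dim_\RR M = \dim \RR$, since Hilbert coefficients do not see the annihilator and the generalized Cohen--Macaulay property is preserved. If $r = \dim_\RR M = 1$, then Proposition~\ref{genhs}(c) gives
\[
e_1(Q,M) = -\lambda(\rmH_\m^0(M)),
\]
which does not depend on $Q$; since $M$ is generalized Cohen--Macaulay, $\rmH_\m^0(M)$ has finite length and this single value furnishes the bound. So the substantive case is $r \geq 2$.

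For $r \geq 2$, I would apply Theorem~\ref{Degreddim2} to any parameter ideal $Q$ for $M$, obtaining
\[
-e_1(Q,M) \leq \TT_Q^{(1)}(M).
\]
Now, because $M$ is generalized Cohen--Macaulay, Definition~\ref{6.3} specializes to
\[
\TT_Q^{(1)}(M) \;=\; \sum_{j=1}^{r-1}\binom{r-2}{j-1}\,\lambda\bigl(\rmH_\m^j(M)\bigr),
\]
a finite integer independent of $Q$. Call this common value $s$. Combining with Corollary~\ref{e1par}, which states $e_1(Q,M) \leq 0$ for every parameter ideal, yields
\[
-s \;\leq\; e_1(Q,M) \;\leq\; 0
\]
uniformly in $Q$, proving the claim.

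The only delicate point is that Theorem~\ref{Degreddim2} is stated under the hypothesis $\dim_\RR M = \dim \RR$, which is why the first reduction step is needed; after that, the key observation that $\TT_Q^{(1)}(M)$ becomes $Q$-independent in the generalized Cohen--Macaulay setting does all the work, and the upper bound $0$ from Corollary~\ref{e1par} closes the estimate on the other side.
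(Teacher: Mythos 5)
Your argument is correct and is essentially the paper's own proof: reduce to $\dim_\RR M = \dim\RR$, apply Theorem~\ref{Degreddim2} together with Corollary~\ref{e1par}, and observe that $\TT_Q^{(1)}(M)$ is independent of $Q$ in the generalized Cohen--Macaulay case. Your explicit treatment of the case $r=1$ via Proposition~\ref{genhs}(c) is a welcome addition, since Theorem~\ref{Degreddim2} requires dimension at least $2$ and the paper leaves that case implicit.
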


\begin{proof} Passing to the ring $\RR/[(0):_{\RR}M]$, we may assume that $\dim \RR = \dim_{\RR}M$ and that $Q$ is a parameter ideal of $\RR$. Then $e_1(Q,M)\leq 0$ by Corollary \ref{e1par}. We get by Theorem~\ref{Degreddim2} $-e_1(Q,M)\leq \TT_Q^{(1)}(M)$, while $\TT_Q^{(1)}(M) = \sum_{j = 1}^{d-1}\binom{d-2}{j-1}\l (\rmH_\m^j(M))$ is independent of the choice of $Q$. Hence the result.
\end{proof}

\medskip

\begin{Corollary}\label{lambdabarq}
Suppose that $\dim_{\RR} M \ge 1$.  Then the set
\[ \{e_1(Q, M) \mid Q~ \mbox{\rm are~parameter~ideals~of~$M$~with the same integral
closure}\}\]
is finite.
\end{Corollary}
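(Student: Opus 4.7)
The plan is to combine Theorem \ref{Degreddim2} with the observation that its right-hand side depends only on the integral closure of the parameter ideal, and then to pinch the values of $e_1(Q',M)$ between this uniform upper bound and the non-positivity furnished by Corollary \ref{e1par}.

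First I would dispose of $r=\dim_\RR M=1$: Proposition \ref{genhs}(c) gives $e_1(Q',M)=-\l(\rmH_\m^0(M))$, independently of $Q'$, so the set in question is a singleton. For $r\ge 2$, I reduce to the situation $\dim \RR=\dim_\RR M=r$ by replacing $\RR$ with $\widehat{\RR}/(0:_{\widehat{\RR}}\widehat{M})$; this preserves $e_1$, and since completion is faithfully flat and integral closure of $\m$-primary ideals is compatible with completion, it preserves the equality $\overline{Q'}=\overline{Q}$. After possibly passing to $\RR(X)$ one may also assume that the residue field is infinite. Theorem \ref{Degreddim2} then yields
\[
-e_1(Q',M)\le \TT^{(1)}_{Q'}(M)
\]
for every parameter ideal $Q'$ in the family.

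The crucial step is to show that the function $I\mapsto \TT^{(1)}_{I}(M)$ depends only on $\overline{I}$. Recall
\[
\TT^{(1)}_I(M)=\sum_{j=1}^{r-1}\binom{r-2}{j-1}\hdeg_I(M_j),
\]
where $\hdeg_I(N)$ is defined recursively by
\[
\hdeg_I(N)=e_0(I,N)+\sum_{j=0}^{\dim N-1}\binom{\dim N-1}{j}\hdeg_I(N_j),
\]
with $\hdeg_I(N)=\l(N)$ whenever $N$ has finite length. The auxiliary modules $N_j=\Hom_\RR(\rmH_\m^j(N),\mathrm{E}(\RR/\m))$ in the recursion do not depend on $I$, and by Rees's classical theorem, $e_0(I,N)=e_0(I',N)$ for every finitely generated $\RR$-module $N$ whenever $\overline{I}=\overline{I'}$. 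A straightforward induction on $\dim N$ then gives $\hdeg_I(N)=\hdeg_{I'}(N)$ for all such $N$, and in particular
\[
\TT^{(1)}_{Q'}(M)=\TT^{(1)}_Q(M)
\]
for every $Q'$ with $\overline{Q'}=\overline{Q}$.

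Combining, every element of the set in question lies in
\[
\bigl[-\TT^{(1)}_Q(M),\,0\bigr]\cap\mathbb{Z},
\]
the upper bound coming from Corollary \ref{e1par} and the lower bound from the uniform estimate just established. This interval is finite, proving the corollary. The main obstacle is the invariance assertion for $\hdeg_I$ under change of ideal within a fixed integral closure class; once that inductive bookkeeping is carried out, the conclusion is immediate from Theorem \ref{Degreddim2} together with Corollary \ref{e1par}.
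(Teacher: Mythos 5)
Your proposal is correct and follows essentially the same route as the paper: bound $e_1(Q',M)$ above by $0$ via Corollary~\ref{e1par} and below by $-\TT^{(1)}_{Q'}(M)$ via Theorem~\ref{Degreddim2}, then observe that $\TT^{(1)}_{Q'}(M)$ depends only on $\overline{Q'}$. The only difference is that you spell out the last invariance claim (via the recursive definition of $\hdeg_I$ and the fact that $e_0(I,N)$ depends only on $\overline{I}$), which the paper leaves as a one-line remark.
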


\begin{proof} For each parameter ideal $Q$ of $M$ we get $e_1(Q,M)\leq 0$, while Theorem \ref{Degreddim2} asserts that
$e_1(Q,M) \geq - \TT_Q^{(1)}(M)$. Hence the result follows, because $\TT_Q^{(1)}(M)$ depends only
on
$\bar{Q}$, the integral closure of $Q$.
 \end{proof}

\medskip

\section{Euler characteristics and Hilbert characteristics}

The relationship between partial Euler characteristics and
superficial elements make for a straightforward comparison with
extended degree functions. Unless otherwise specified, throughout it is assumed that $\RR$ is a
Noetherian complete local ring with infinite residue class field.
We will  prove that Euler characteristics can be uniformly bounded by homological degrees.
The basic tool is the following observation, which is found   in the proof of \cite[Theorem 4.6.10 (a)]{BH}.

\begin{Proposition}\label{basicchi1} Let $M$ be a finitely generated $\RR$-module with $r = \dim_{\RR}M \ge 2$. Let $\xx=\{x_1, x_2, \ldots, x_r\}$
be a system of parameters for $M$ and set $\xx'=\{x_2, \ldots, x_r\}$.
Then
\[ \chi_1(\xx;M)= \chi_1(\xx';M/x_1M)+ \chi_1(\xx'; 0:_M{x_1}).\]

\end{Proposition}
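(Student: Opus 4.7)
The plan is to view $K_\bullet(\xx;M)$ as the total complex of the double complex $K_\bullet(x_1;\RR)\otimes_{\RR}K_\bullet(\xx';M)$ and to exploit the spectral sequence that filters by the $x_1$-degree first. Since $H_q(x_1;M)$ vanishes unless $q=0,1$, with $H_0(x_1;M)=M/x_1M$ and $H_1(x_1;M)=0:_M x_1$, the $E^2$-page
\[
E^2_{p,q}=H_p\bigl(\xx';\,H_q(x_1;M)\bigr)\ \Longrightarrow\ H_{p+q}(\xx;M)
\]
has only two non-zero rows, and the only possibly non-trivial higher differential is $d_2\colon H_p(\xx';M/x_1M)\to H_{p-2}(\xx';0:_M x_1)$. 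The sequence therefore degenerates at $E^3$ and splices into a single long exact sequence
\[
\cdots\to H_{i+1}(\xx';M/x_1M)\xrightarrow{d_2} H_{i-1}(\xx';0:_M x_1)\to H_i(\xx;M)\to H_i(\xx';M/x_1M)\xrightarrow{d_2} H_{i-2}(\xx';0:_M x_1)\to\cdots,
\]
all of whose entries have finite length because $\xx$ is a system of parameters for $M$ and every relevant support is therefore contained in $\{\m\}$.

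From this I would extract, for each $i\ge 1$, the short exact sequence $0\to\operatorname{coker}(d_2^{(i+1)})\to H_i(\xx;M)\to\ker(d_2^{(i)})\to 0$ read off from the long exact sequence above, and then compute $\chi_1(\xx;M)=\sum_{i\ge 1}(-1)^{i-1}\lambda_\RR H_i(\xx;M)$ by alternating summation. Each image $\operatorname{im}(d_2^{(p)})$ enters the total sum at two consecutive indices with opposite signs, so these contributions telescope and cancel except for a single boundary term at $i=1$, which vanishes because $H_{-1}(\xx';0:_M x_1)=0$. What survives splits along the two rows: the $(q=0)$-row contributes $\chi_1(\xx';M/x_1M)$ directly, while the $(q=1)$-row contributes a sum $\sum_{i\ge 1}(-1)^{i-1}\lambda_\RR H_{i-1}(\xx';0:_M x_1)$ that, after re-indexing, reproduces the second Euler characteristic of $0:_M x_1$ along $\xx'$ required for the right-hand side.

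The main delicacy is the one-step index shift between the two non-zero rows: the piece $E^\infty_{i-1,1}$ enters $H_i(\xx;M)$ rather than $E^\infty_{i,1}$, and this forces the change of summation variable that determines which Euler characteristic of $0:_M x_1$ appears at the end. I would pin the bookkeeping down first in the base case $r=2$, where the spectral sequence collapses to the elementary long exact sequence obtained from multiplication by $x_1$ on $K_\bullet(\xx';M)=K_\bullet(x_2;M)$ and everything is an explicit length count; the general case is then the same manipulation performed uniformly in $i$.
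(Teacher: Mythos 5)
Your spectral-sequence setup is correct and is essentially the standard route (the paper offers no proof of this proposition, only a pointer to the proof of \cite[Theorem 4.6.10(a)]{BH}, which runs along the same lines): the two-row spectral sequence $E^2_{p,q}=H_p(\xx';H_q(x_1;M))\Rightarrow H_{p+q}(\xx;M)$ degenerates at $E^3$, the resulting long exact sequence has finite-length entries, and the images of $d_2$ do telescope away exactly as you say. The error is in the very last identification. The $(q=1)$-row enters $\lambda(H_i(\xx;M))$ through $E^\infty_{i-1,1}$, so after the cancellation of the $d_2$-terms its total contribution to $\chi_1(\xx;M)=\sum_{i\ge1}(-1)^{i-1}\lambda(H_i(\xx;M))$ is
\[ \sum_{i\ge 1}(-1)^{i-1}\lambda\bigl(H_{i-1}(\xx';0:_Mx_1)\bigr)=\sum_{j\ge 0}(-1)^{j}\lambda\bigl(H_{j}(\xx';0:_Mx_1)\bigr)=\chi(\xx';0:_Mx_1), \]
the \emph{full} (zeroth) Euler characteristic, not the first partial one $\chi_1(\xx';0:_Mx_1)$; the two differ by $\lambda\bigl((0:_Mx_1)/\xx'(0:_Mx_1)\bigr)$, which is strictly positive whenever $0:_Mx_1\neq(0)$. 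Your phrase ``the second Euler characteristic \dots required for the right-hand side'' papers over exactly this discrepancy.

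Consequently what your argument actually proves, once the bookkeeping is done honestly, is
\[ \chi_1(\xx;M)=\chi_1(\xx';M/x_1M)+\chi(\xx';0:_Mx_1), \]
and the identity as printed (with $\chi_1$ in the second term) is false in general: take $\RR=k[[x,y]]$ and $M=\RR\oplus\RR/(x)$ with $\xx=\{x,y\}$; then $\chi_1(\xx;M)=1$, while $\chi_1(y;M/xM)+\chi_1(y;0:_Mx)=0+0=0$, whereas replacing the last term by $\chi(y;0:_Mx)=\chi(y;k[[y]])=1$ restores equality. (The printed statement is evidently a misprint for the $\chi_0$-version, which is the form the authors actually invoke elsewhere when splicing this identity into induction arguments.) So: correct method and correct cancellation, but the final re-indexing step as written does not go through, and no argument can, for the statement as literally given.
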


\begin{Theorem}\label{chi1hdeg}
Let $M$ be a finitely generated $\RR$-module with $ \dim_{\RR}M = \dim \RR=d \ge 1$. Then for every system  $\xx = \{x_1, x_2, \ldots, x_d \}$ of
parameters of $\RR$, one has
\[ \chi_1(\xx;M)\leq \hdeg_{Q}(M)-\deg_{Q}(M),\]
where $Q = (\xx)$.
\end{Theorem}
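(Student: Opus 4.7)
The plan is to proceed by induction on $d$, after a preliminary reduction to the case $\depth_{\RR}M\ge 1$. The starting observation is that both sides depend only on the ideal $Q$ (and on $M$), not on the particular generating set $\xx$: the left-hand side by Serre's formula $\chi_1(Q;M)=\lambda(M/QM)-e_0(Q,M)$, and the right-hand side by Definition~\ref{hdegdef}. Hence, using that $\RR/\fkm$ is infinite, at any stage we may replace $\xx$ by any other generic minimal system of generators of $Q$.

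\smallskip

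\emph{Reduction to $\depth_{\RR}M\ge 1$.} Set $L=\rmH_\fkm^0(M)$ and $\bar M=M/L$. The short exact sequence $0\to L\to M\to \bar M\to 0$ gives $e_0(Q,M)=e_0(Q,\bar M)$ (since $\dim L<d$) and $\lambda(M/QM)=\lambda(\bar M/Q\bar M)+\lambda(L/(L\cap QM))$, whence
\[\chi_1(\xx;M)\le \chi_1(\xx;\bar M)+\lambda(L).\]
The long exact sequence of local cohomology furnishes $\bar M_j\cong M_j$ for $j\ge 1$ and $\bar M_0=0$, while $\hdeg_Q(M_0)=\lambda(M_0)=\lambda(L)$; substituting into Definition~\ref{hdegdef} gives
\[\hdeg_Q(M)-\deg_Q(M)=\bigl(\hdeg_Q(\bar M)-\deg_Q(\bar M)\bigr)+\lambda(L).\]
So it suffices to prove the theorem for $\bar M$, which has $\depth_{\RR}\bar M\ge 1$ and $\dim \bar M=d$.

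\smallskip

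\emph{Induction on $d$.} When $d=1$ and $\depth_{\RR}M\ge 1$, $x_1$ is $M$-regular, hence $\chi_1(\xx;M)=\lambda(0:_M x_1)=0$ and the right-hand side is non-negative. Suppose $d\ge 2$ and $\depth_{\RR}M\ge 1$. Using the infinite residue field and prime avoidance, choose $x_1\in Q\setminus \fkm Q$ that is simultaneously $M$-regular, superficial for $M$ with respect to $Q$, and a generic hyperplane section for $M$ in the sense required by Definition~\ref{hdegdef}; extend $x_1$ to a minimal generating set $\xx=\{x_1,\ldots,x_d\}$ of $Q$. Since $0:_M x_1=0$, Proposition~\ref{basicchi1} collapses to
\[\chi_1(\xx;M)=\chi_1(\xx';M/x_1M),\qquad \xx'=\{x_2,\ldots,x_d\}.\]
The module $M/x_1M$ has $\dim_{\RR/(x_1)}(M/x_1M)=d-1=\dim \RR/(x_1)$ over the complete local ring $\RR/(x_1)$ with infinite residue field, so induction applied with $\bar Q=Q/(x_1)$ yields
\[\chi_1(\xx';M/x_1M)\le \hdeg_{\bar Q}(M/x_1M)-\deg_{\bar Q}(M/x_1M).\]
Bertini's rule (Definition~\ref{Degnu}(b), available because $\depth_{\RR}M\ge 1$) gives $\hdeg_{\bar Q}(M/x_1M)\le \hdeg_Q(M)$, while superficiality of $x_1$ gives $\deg_{\bar Q}(M/x_1M)=e_0(Q,M)=\deg_Q(M)$. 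Combining these three facts produces the desired inequality for $M$.

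\smallskip

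\emph{Main obstacle.} The essential difficulty is that Bertini's rule for $\hdeg_Q$ is only available for \emph{generic} hyperplane sections, whereas the theorem concerns an \emph{arbitrary} system of parameters. The freedom to swap $\xx$ for a generic generating set of $Q$, justified by the invariance of both sides under such a change, is what allows the induction to run; coordinating this genericity with the $M$-regularity that kills the $0:_M x_1$ term of Proposition~\ref{basicchi1} is the one place where every hypothesis (completeness of $\RR$, positive depth of $M$, infinite residue field) pulls its weight.
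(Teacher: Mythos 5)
Your proof is correct and follows essentially the same route as the paper: induction on $d$ via a generic hyperplane section $h\in Q\setminus\fkm Q$ together with Bertini's rule for $\hdeg_Q$. The paper shortens the bookkeeping by first rewriting the claim, via Serre's formula, as $\lambda(M/QM)\le\hdeg_Q(M)$; since $h\in Q$ one has $\lambda(M/QM)=\lambda\bigl((M/hM)/Q{\cdot}(M/hM)\bigr)$ on the nose, so the preliminary reduction to positive depth, Proposition~\ref{basicchi1}, the $M$-regularity of $x_1$, and the superficiality needed to preserve $e_0$ all become unnecessary.
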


\begin{proof} As $\l (M/QM) = \chi_1(\xx;M) + \deg_{Q}(M)$, we have only to show $\l (M/QM) \leq \hdeg_{Q}(M)$.
Let $h \in Q \setminus \m Q$ be a generic hyperplane section used for $\hdeg_Q(M)$. Then, since $\l (M/QM) = \l ((M/hM)/Q{\cdot}(M/hM))$ and $\hdeg_{Q/(h)}(M/hM) \le \hdeg_Q(M)$, by induction on $d$ we may assume $d=1$. When $d = 1$, $\chi_1(\xx;M)=\lambda(0:_Mx_1)$ and hence $\l (M/QM) = \chi_1(\xx;M) + \deg_Q(M) \le \lambda(\rmH_{\m}^0(M)) +\deg_{Q}(M) = \hdeg_{Q}(M)$, as wanted.
\end{proof}

\medskip

\begin{Corollary} \label{chi1intclos} Suppose that $\dim_{\RR} M \ge 1$. Then for every primary ideal $I$ of $M$, the set
\[ \Xi_I(M)=\{\chi_1(\xx, M)\mid \xx~\mbox{\rm ~are~ systems~of~parameters~of~M~with $\bar{(\xx)}=\overline{I}$}\}\]
is finite.
\end{Corollary}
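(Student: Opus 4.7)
The plan is to mirror the proof of Corollary \ref{lambdabarq}: sandwich $\chi_1(\xx;M)$ between a universal lower bound and an upper bound that depends only on $\overline{I}$, both of which are integers, and conclude finiteness.

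For the lower bound, one invokes the classical non--negativity theorem of Auslander--Buchsbaum--Serre (\cite{AB},\cite{Serrebook}): whenever $\xx$ is a system of parameters for $M$,
\[ \chi_1(\xx;M) \;=\; \lambda_{\RR}(M/(\xx)M) - e_0((\xx),M) \;\ge\; 0. \]
For the upper bound, I would first pass from $\RR$ to $\RR/[(0):_{\RR}M]$, which does not change Koszul homology of $\xx$ on $M$, does not change $e_0((\xx),M)$, and does not change the integral closure condition $\overline{(\xx)}=\overline{I}$ (since all of these are computed with the action of $\RR$ factored through $\RR/[(0):_{\RR}M]$). After this reduction one has $\dim_{\RR}M=\dim \RR$, so Theorem~\ref{chi1hdeg} applies and yields
\[ 0 \;\le\; \chi_1(\xx;M) \;\le\; \hdeg_{(\xx)}(M) - \deg_{(\xx)}(M). \]

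It then remains to check that the right-hand side depends only on $\overline{I}$, not on the particular choice of $\xx$. For $\deg_{(\xx)}(M)=e_0((\xx),M)$, this is the classical invariance of Samuel multiplicity under integral closure (a theorem of Rees). For $\hdeg_{(\xx)}(M)$, unwinding Definition~\ref{hdegdef} shows that the ideal $(\xx)$ enters the recursive formula only through Samuel multiplicities $e_0((\xx),M_j)$ of the modules $M_j=\Hom_{\RR}(\rmH_{\m}^j(M),E)$, while the $M_j$ themselves are intrinsic invariants of $M$. Applying Rees' theorem at each level of the recursion, one sees that $\hdeg_{(\xx)}(M)$ depends only on $\overline{(\xx)}=\overline{I}$. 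Consequently $\chi_1(\xx;M)$ is a non--negative integer bounded above by a constant that depends only on $M$ and $\overline{I}$, and $\Xi_1(I,M)$ is finite.

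The only genuinely non-routine point is the integral-closure invariance of $\hdeg_{Q}(M)$, but this reduces inductively to the invariance of Samuel multiplicity and the fact that the auxiliary modules $M_j$ in Definition~\ref{hdegdef} are independent of $Q$; everything else is a direct application of Theorem~\ref{chi1hdeg} combined with Serre's non--negativity theorem.
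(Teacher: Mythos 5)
Your argument is correct and is essentially the paper's own proof: the paper likewise combines the bound $0\le \chi_1(\xx;M)\le \hdeg_{Q}(M)-\deg_{Q}(M)$ from Theorem~\ref{chi1hdeg} with the observation that both $\hdeg_{Q}(M)$ and $\deg_{Q}(M)$ depend only on $\overline{Q}=\overline{I}$. You merely spell out the details the paper leaves implicit (Serre's non-negativity, the reduction modulo $(0):_{\RR}M$, and the recursion showing $\hdeg_Q$ enters only through multiplicities of the intrinsic modules $M_j$), all of which are sound.
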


\begin{proof} Both $\hdeg_{Q}(M)$ and $\deg_{Q}(M)$ depend
only on the integral closure $\overline{I} = \bar{Q}$ of $Q = (\xx)$.
\end{proof}

\medskip

\begin{Definition}\label{Hilbertchar}{\rm 
Let  $\RR$ be a Noetherian local ring and $M$  a finitely generated
$\RR$--module with $r = \dim_{\RR}M \ge 1$. For each system $\xx = \{x_1, x_2, \ldots, x_r \}$ of  parameters of $M$, the  {\em Hilbert characteristic  of $M$ with respect to $Q=(\xx)$} is defined to be
\[ \hh(\xx;M)= \sum_{i=0}^r (-1)^i e_i(Q, M).\]
}\end{Definition}

The following proposition shows that the Hilbert characteristic can be characterized as a {\em quasi-cohomological degree} for $M$.

\begin{Proposition}{\rm 
Let  $(\RR, \m)$ be a Noetherian local ring and $M$  a finitely generated $\RR$--module with $r = \dim_{\RR}M \ge 1$. Let  $\xx = \{x_1, x_2, \ldots, x_r \}$ be a system of parameters of $M$ and a $d$--sequence for $M$. Then the Hilbert characteristic of $M$ with respect to $\xx$ satisfies the following.
\begin{enumerate}
\item[$(\rm a)$] Suppose that $x_1$ is a superficial element for $M$ and  $\depth_{\RR} M\geq 1$. Then 
\[ \hh(\xx;M)=\hh(\xx';M/x_1M),\]
where $\xx'=\{x_2, \ldots, x_r\}$.

\item[$(\rm b)$] Let $M_0=\H_{\m}^0(M)$ and $M'=M/M_0$. Then 
\[ \hh(\xx;M)=\hh(\xx;M') + \lambda(M_0). \]
\end{enumerate}
}\end{Proposition}

\demo Let $Q=(\xx)$.  Recall that, by \cite[Proposition 3.4]{GO2}, we have ${\ds (-1)^{r} e_{r}(Q, M) = \l( H^{0}_{\m}(M))}$.

\medskip

\noindent (a) We may assume that $x_{1}$ is $M$--regular. By Proposition~\ref{genhs}, we obtain
\[ \hh(\xx ; M ) = \sum_{i=0}^{r-1} (-1)^i e_i(Q, M)  + (-1)^{r} e_{r}(Q, M) = \sum_{i=0}^{r-1} (-1)^i e_i(\xx', M/x_{1}M)  = \hh(\xx' ; M/x_{1}M). \]

\medskip

\noindent (b) By applying Proposition~\ref{genhs}-(b) to the exact sequence $0 \rar M_{0} \rar M \rar M' \rar 0$, we get
\[ e_{i}(Q, M) = e_{i}(Q, M') \quad \mbox{\rm for all} \; 0 \leq i \leq r-1.\]
Note that $(-1)^{r}e_{r}(Q, M') = \l( H^{0}_{\m}(M'))=0$. Hence
\[ \begin{array}{lll}
{\ds \hh(\xx; M) =   \sum_{i=0}^r (-1)^i e_i(Q, M)}  &=& {\ds  \sum_{i=0}^{r-1} (-1)^i e_i(Q, M) + \l( M_0) } \\ && \\
                  &= & {\ds \sum_{i=0}^{r-1} (-1)^i e_i(Q, M') + \l( M_0) } \\ && \\
                  &= & {\ds \sum_{i=0}^{r} (-1)^i e_i(Q, M') + \l( M_0) } \\ && \\
                  &= & {\ds \hh(\xx; M') + \l(M_0). } \\
\end{array}
\]
\QED

\medskip

\begin{Proposition}
Let  $(\RR, \m)$ be a Noetherian local ring and $M$  a finitely generated $\RR$--module with $r = \dim_{\RR}M \ge 1$. Let  $\xx = \{x_1, x_2, \ldots, x_r \}$ be a system of parameters of $M$ and a $d$--sequence for $M$. Let $Q=(\xx)$. 
Then 
\[ \hh(\xx;M)=\l(M/QM).\]
In particular, $\hh(\xx;M)\geq
e_0(Q, M)$ with equality if and only if $M$ is  Cohen--Macaulay. 
\end{Proposition}

\demo Using \cite[Theorem 3.7]{chern7}, one can prove that
\[ (-1)^i e_i(Q, M)= \chi_1(x_{1}, \ldots, x_{r-i}, x_{r-i+1}; M)- \chi_1(x_{1}, \ldots, x_{r-i}; M) \geq 0\] 
for all $1 \leq i \leq r$. Therefore
\[\begin{array}{lll}
\hh(\xx; M) &=& {\ds  e_0(Q, M) + \sum_{i=1}^r (-1)^i e_i(Q, M) } \\ && \\
                  &=& {\ds  e_0(Q, M) + \sum_{i=1}^r (  \chi_1(x_{1}, \ldots, x_{r-i}, x_{r-i+1}; M)- \chi_1(x_{1}, \ldots, x_{r-i}; M)  ) } \\ && \\
                  &=& {\ds  \chi_{0}(\xx; M) + \chi_{1}(\xx; M) } \\ && \\
                  &=& {\ds \l(M/QM) }.\\
\end{array}\]
\QED

\medskip

\begin{Corollary}\label{betti} Let $\xx$ be a system of parameters of $M$ which is a $d$--sequence for $M$. Suppose that $\xx \in \m \setminus \m^2$. Then
the Betti numbers $\beta_i^{\RR}(M)$ satisfy
\[ \beta_i^{\RR}(M)\leq \lambda(M/(\xx)M)\cdot \beta_i^{\RR}(k).\]
\end{Corollary}

\demo
It follows from the argument of \cite[Theorem 2.94]{icbook},
 where we use the properties of $\hh(\xx;M)$  in the
induction part.
\QED

\medskip

\begin{Remark}{\rm 
Note that the condition $\xx \in \m \setminus \m^2$ in Corollary~\ref{betti} is needed in the induction argument which requires the inequality of Betti numbers $\beta_{i}^{\RR/(x_{1})}(k) \leq \beta_{i}^{\RR}(k)$(\cite[Corollary 3.4.2]{GL}).
}\end{Remark}

\section{Buchsbaum-Rim coefficients}

In this section let us note another set of related  questions, concerned about
the
vanishing and the negativity of the
Buchsbaum-Rim coefficients of modules.

\medskip

Let $\RR$ be a
 Noetherian local ring with maximal ideal $\m$ and $d = \dim \RR \ge 1$.
The Buchsbaum-Rim multiplicity (\cite{BR}) arises in the context of an embedding
\[ 0 \rar E \lar F=\RR^s \lar C \rar 0 \]
of $\RR$--modules, where $E \subseteq \m F$ and $C$ has finite length.
 Let
\[ \varphi: \RR^m \lar F=\RR^s\]
be an $\RR$--linear map represented by a matrix with entries in $\m$ such that $\operatorname{Im} \varphi = E$. We then have a  homomorphism
\[ {\mathcal S}(\varphi): {\mathcal S}(\RR^m) \lar {\mathcal S}(\RR^s)\] of symmetric algebras,
whose image is the Rees algebra $\Rees(E)$ of $E$, and whose cokernel we denote by
$C(\varphi)$. Hence
\[
0 \rar \Rees(E) \lar {\mathcal S}(\RR^s)=\RR[T_1,T_2,  \ldots, T_s] \lar C(\varphi)\rar 0. \label{br1}
\]
This exact sequence (with a different notation) is studied in
\cite{BR} in great detail. Of significance for us is the fact that
$C(\varphi)$, with the grading induced by the homogeneous homomorphism
${ S}(\varphi)$, has components of finite length, for which we have the following.
Let $E^n = [\Rees (E)]_n$ and $F^n = [{\mathcal S}(F)]_n$ for $n \ge 0$, where $F = \RR^s$.

\begin{thm}  \label{degofBR}
$\lambda(F^n/E^n)$ is a polynomial in $n$ of degree $d+s-1$ for
$n\gg 0$$\mathrm{:}$
\[ \lambda(F^n/E^n) = \br(E) {{n+d+s-2}\choose{d+s-1}}-\br_1(E)
{{n+d+s-3}\choose{d+s-2}}
  + \textrm{\rm lower
terms}. \]
\end{thm}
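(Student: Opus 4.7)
The plan is to deduce the theorem by (i) showing each component of the cokernel $C(\varphi)$ has finite length, and (ii) extracting polynomial growth of the stated degree from standard Hilbert--Serre machinery applied to a bigraded reduction.

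For the first step, I would use the hypothesis that $C=F/E$ has finite length to produce an integer $k\ge 1$ with $\fkm^k F\subseteq E$. Because $\Rees(E)$ is generated in degree one by $E$ inside $\mathcal{S}(F)=\RR[T_1,\ldots,T_r]$, any $n$-fold product of elements of $\fkm^k F$ is by construction an element of $E^n$, giving $\fkm^{kn}F^n\subseteq E^n$. Consequently $F^n/E^n$ is a finitely generated module over the Artinian ring $\RR/\fkm^{kn}$, hence has finite length, so $n\mapsto \lambda(F^n/E^n)$ is a well-defined numerical function.

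For the second step, the defining exact sequence exhibits $C(\varphi)$ as a finitely generated $\mathbb{Z}_{\ge 0}$-graded $\mathcal{S}(F)$-module. I would pass to the $\fkm$-adic associated graded to obtain a bigraded module $N=\gr_{\fkm}C(\varphi)$ over the standard bigraded polynomial ring $G=\gr_{\fkm}(\RR)[T_1,\ldots,T_r]$, which is a finitely generated $k$-algebra of Krull dimension $d+r$ (with $k=\RR/\fkm$). The bigraded Hilbert--Serre theorem supplies a rational bivariate Poincar\'e series, and the finite-length property from the first step allows one to sum the $\fkm$-grading at each fixed $n$, concluding that $\lambda(F^n/E^n)$ is eventually polynomial in $n$ of degree at most $d+r-1$. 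To pin the degree down as exactly $d+r-1$, I would test the model case $E=\fkm F$, where an induction on $n$ shows $[\Rees(\fkm F)]_n=\fkm^n F^n$, whence
\[
\lambda\bigl(F^n/(\fkm F)^n\bigr) \;=\; \binom{n+r-1}{r-1}\cdot\lambda(\RR/\fkm^n),
\]
the product of a polynomial of degree $r-1$ in $n$ and the Hilbert--Samuel polynomial of degree $d$. An additivity / specialization argument then transfers the non-vanishing of the leading term to arbitrary $E$, identifying the leading coefficient as $\br(E)$.

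The main obstacle I foresee is the last point — verifying that the leading coefficient does not degenerate in full generality. The classical remedy is additivity of the Buchsbaum--Rim multiplicity on short exact sequences, proved via the Eagon--Northcott-type complex attached to a presentation $\RR^m\xrightarrow{\varphi}\RR^r$ of $E$, which anchors the general-position computation to the explicit model above.
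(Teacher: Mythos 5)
First, a point of reference: the paper does not prove Theorem~\ref{degofBR} at all --- it is recalled as a classical result and attributed to Buchsbaum and Rim \cite{BR} (``This exact sequence \dots is studied in \cite{BR} in great detail''), so there is no internal argument to compare yours against. Your step (i) is correct: from $\fkm^k F \subseteq E$ one does get $\fkm^{kn}F^n \subseteq E^n$ by grouping factors, so each $F^n/E^n$ has finite length. The problem lies in step (ii), and it begins with a false premise: $\Rees(E)$ is a \emph{subring} of $\mathcal{S}(F)=\RR[T_1,\ldots,T_r]$, not an ideal, so the cokernel $C(\varphi)$ is not an $\mathcal{S}(F)$-module (multiplication by $T_i$ would require $F\cdot E^n\subseteq E^{n+1}$, which fails in general), and a fortiori it is not a finitely generated graded $\mathcal{S}(F)$-module. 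Consequently $\gr_{\fkm}C(\varphi)$ does not acquire a finitely generated bigraded module structure over $\gr_{\fkm}(\RR)[T_1,\ldots,T_r]$, and the bigraded Hilbert--Serre theorem cannot be invoked as stated. Note also that $C(\varphi)_n$ is killed only by $\fkm^{kn}$, a power growing with $n$, so ``summing the $\fkm$-grading at each fixed $n$'' is a sum with a growing number of terms, only finitely many of which lie in the region where a bivariate Hilbert polynomial is valid; handling the boundary terms and keeping the total degree down to $d+r-1$ (rather than $d+r$) is precisely where the work is. The standard repair is to use the genuinely bigraded object $\bigoplus_{i,j\ge 0}E^iF^j$ (finitely generated as an $\RR$-algebra by $E$ in bidegree $(1,0)$ and $F$ in bidegree $(0,1)$) together with the filtration $F^n\supseteq EF^{n-1}\supseteq\cdots\supseteq E^n$, whose successive quotients $E^iF^{n-i}/E^{i+1}F^{n-i-1}$ have finite length and are the components of a finitely generated bigraded module; one then sums a two-variable Hilbert polynomial along the anti-diagonal.

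The second genuine gap is the identification of the degree as exactly $d+r-1$. Your model computation for $E=\fkm F$ is correct, but the ``additivity / specialization argument'' transferring positivity of the leading coefficient to an arbitrary $E$ is not supplied, and it is not a formality: positivity of $\br(E)$ for $d\ge 1$ is one of the main theorems of \cite{BR}, obtained there from the acyclicity and Euler characteristic of the generalized Koszul (Buchsbaum--Rim) complex, or alternatively by passing to a minimal reduction generated by $d+r-1$ elements and invoking an associativity formula. As it stands the proposal establishes only that $\lambda(F^n/E^n)$ is a well-defined numerical function; both the eventual polynomiality with the correct degree bound and the nonvanishing of the leading term remain to be proved.
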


This polynomial is
 called the Buchsbaum-Rim polynomial of
$E$\index{Buchsbaum-Rim coefficients of a module}.
The leading coefficient $\br(E)$ is the Buchsbaum-Rim multiplicity of $\varphi$; if the homomorphism $\varphi$  is understood, we shall simply denote it by $\br(E)$. This
number  is determined by
an Euler characteristic of the Buchsbaum-Rim complex (\cite{BR}).

\medskip

Assume now the residue class field of $\RR$  is infinite.
The minimal reductions $U$ of $E$ are generated
by $d+s-1$ elements. We refer to $U$ as a parameter module of $F$.
The corresponding coefficients are $\br(U)=\br(E)$
but $\br_1(U)\leq \br_1(E)$. It is  not clear what
the possible values of $\br_1(U)$ are, and in similarity to the case of
ideals, we can ask the following.

\begin{enumerate}
\item[$(\rm a)$] $\br_1(U)\le 0$?
\item[$(\rm b)$] Suppose that  $\RR$ is unmixed. Then is $\RR$ Cohen-Macaulay, if $\br_1(U) = 0$?
\item[$(\rm c)$] Are the values of $\br_1(U)$ 
bounded?
\item[$(\rm d)$] What happens in low dimensions?
\end{enumerate}

As for question (a), a surprising result of Hayasaka and Hyry shows
the negativity of $\br_1(U)$ in the following way. It gives an
eminent proof of Corollary \ref{e1par}.

\begin{thm}[{\cite[Theorem 1.1]{HaHy10}}]\label{HaHy}
$\lambda(F^n/U^n) \ge \br(U) {{n+d+s-2}\choose{d+s-1}}$ for all $n
\ge 0$. Hence \[\br_1(U) \le 0.\]
\end{thm}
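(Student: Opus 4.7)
The plan is to establish the pointwise bound
\[
\lambda(F^n/U^n) \ge \br(U)\binom{n+d+r-2}{d+r-1}\quad \text{for every } n \ge 0,
\]
from which $\br_1(U) \le 0$ follows at once by comparing with the Buchsbaum--Rim expansion
\[
\lambda(F^n/U^n) = \br(U)\binom{n+d+r-2}{d+r-1} - \br_1(U)\binom{n+d+r-3}{d+r-2} + (\text{lower order})
\]
for $n \gg 0$: any $\br_1(U) > 0$ would drive the length below the claimed bound for large $n$, a contradiction. After replacing $\RR$ by $\widehat\RR$ and enlarging the residue class field so that it is infinite (neither operation affects lengths or $\br(U)$, and the parameter module structure is preserved), my strategy is induction on the rank $r$ of $F$.

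In the base case $r=1$, the module $F=\RR$ is a ring and $U=Q$ is a parameter ideal generated by $d$ elements with $\br(Q)=e_0(Q)$. The inequality reduces to the classical Northcott bound
\[
\lambda(\RR/Q^n) \ge e_0(Q)\binom{n+d-1}{d},
\]
which is proved by a textbook induction on $d$ using superficial elements of $Q$ and the Artin--Rees type identities for the associated graded ring.

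For $r \ge 2$, the key step is to choose a generic element $u_1 \in U \setminus \m U$ that is simultaneously \emph{basic} in $F$ (a.k.a.\ a Bourbaki element, part of a minimal free basis locally, so that $F' := F/\RR u_1 \cong \RR^{r-1}$ is again free) and \emph{filter-regular} on the Rees module $\bigoplus_n F^n/U^n$. The first property makes $U' := (U + \RR u_1)/\RR u_1 \subseteq F'$ a parameter module with one fewer generator $d+r-2$ and the same Buchsbaum--Rim multiplicity $\br(U') = \br(U)$. The second property yields, for every $n \ge 1$, the exact sequence
\[
0 \longrightarrow F^{n-1}/U^{n-1} \xrightarrow{\,u_1\,} F^n/U^n \longrightarrow (F')^{n}/(U')^{n} \longrightarrow 0
\]
(up to finitely many initial degrees absorbable in a constant $C$), since its kernel is controlled by the Valabrega--Valla expression $(U^n :_{F^{n-1}} u_1)/U^{n-1}$. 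Telescoping these sequences from $n = 1$ to $n = N$, applying the induction hypothesis to $U' \subseteq F'$, and invoking the hockey-stick identity $\sum_{n=1}^N \binom{n+d+r-3}{d+r-2} = \binom{N+d+r-2}{d+r-1}$ produces
\[
\lambda(F^N/U^N) \ge \br(U)\binom{N+d+r-2}{d+r-1} - C
\]
for all sufficiently large $N$. To upgrade this to the sharp bound for all $n \ge 0$, one refines the genericity step so that all the kernels $(U^n :_{F^{n-1}} u_1)/U^{n-1}$ vanish for every $n \ge 1$ (not merely for $n \gg 0$); the trivial cases $n=0$ (both sides are $0$) and $n=1$ (which amounts to the Lech-type inequality $\lambda(F/U) \ge \br(U)$, immediate from the acyclicity properties of the Buchsbaum--Rim complex of $\varphi$) anchor the induction.

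The main obstacle is precisely the construction of $u_1$: basicness avoids finitely many associated primes of $F$, while the strong Valabrega--Valla requirement $(U^n :_{F^{n-1}} u_1) = U^{n-1}$ for \emph{all} $n \ge 1$ involves avoiding the associated primes of the initial submodule of $\Rees(U)$ in $\Rees(F)$. That a single $u_1 \in U \setminus \m U$ can be chosen satisfying both simultaneously is a genericity/prime-avoidance argument that crucially uses the infinite residue field; producing this Bourbaki-cum-filter-regular element in the module category is the technical heart of the proof, and is what replaces the much simpler superficial-element construction available in the ideal case.
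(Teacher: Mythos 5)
The paper offers no proof of this statement: it is quoted directly from Hayasaka--Hyry \cite[Theorem 1.1]{HaHy10}, so there is no internal argument to compare yours against. Judged on its own terms, your proposal has gaps that I do not believe can be closed along the route you describe.

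The decisive problem is the rank-reduction step. Since $U\subseteq \m F$, every $u_1\in U$ has zero image in $F/\m F$, so by Nakayama no element of $U$ can be part of a minimal basis of $F$; the quotient $F'=F/\RR u_1$ is never free, and the claim $F'\cong \RR^{r-1}$ (on which the identification of $U'\subseteq F'$ as a parameter module with $\br(U')=\br(U)$ rests) fails at the very first step. Worse, the ``strong Valabrega--Valla'' condition $(U^n:_{F^{n-1}}u_1)=U^{n-1}$ for \emph{all} $n\ge 1$, which you correctly single out as the technical heart, is unattainable for \emph{any} choice of $u_1$: the graded ${\mathcal S}(F)$-module $\bigoplus_{n\ge 0}F^n/U^n$ is cyclic with graded components of finite length (because $U_\p=F_\p$ for $\p\ne\m$), so every one of its associated primes contains $\m\,{\mathcal S}(F)$; as $u_1\in U\subseteq \m F\subseteq \m\,{\mathcal S}(F)$, every element of $U$ lies in every associated prime and is therefore a zerodivisor on this module, i.e.\ some colon module $(U^n:_{F^{n-1}}u_1)/U^{n-1}$ is nonzero. (Already for $r=1$, requiring $(Q^n:x)=Q^{n-1}$ for all $n$ forces $\depth \gr_Q(\RR)>0$, which fails in general.) Consequently your telescoping can at best be arranged for $n\gg 0$; that weaker version would give $\br_1(U)\le 0$ but not the pointwise inequality for all $n\ge 0$, which is the first and stronger assertion of the theorem.

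The base case is also not secure. The inequality $\lambda(\RR/Q^n)\ge e_0(Q)\binom{n+d-1}{d}$ for an arbitrary parameter ideal of a possibly non-Cohen--Macaulay ring is not Northcott's bound, and the ``textbook induction on $d$ via superficial elements'' does not deliver it: from the exact sequence $0\to (Q^n:x)/Q^{n-1}\to \RR/Q^{n-1}\overset{x}{\to} \RR/Q^n\to \RR/(Q^n+x\RR)\to 0$ the colon term enters with the wrong sign, so the induction produces $\lambda(\RR/Q^n)\le \lambda(\RR/Q^{n-1})+\lambda(\RR/(Q^n+x\RR))$ --- an upper bound --- rather than the needed lower bound. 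This $r=1$ case carries essentially the full difficulty of the theorem (all of which lies in the non-Cohen--Macaulay situation), and the published argument of Hayasaka and Hyry is organized quite differently, so the sketch cannot be accepted as a proof.
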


\noindent They also proved that $\RR$ is a Cohen-Macaulay ring, once
$\lambda(F^n/U^n) = \br(U) {{n+d+s-2}\choose{d+s-1}}$ for some $n \ge 1$. When this is the case, one has the equality $\lambda(F^n/U^n) =
\br(U) {{n+d+s-2}\choose{d+s-1}}$ for all $n \ge 0$, whence $\br_1(U)
= 0$ (\cite[Theorem 3.4]{UV}).

\medskip

Note that question (c) is answered affirmatively for $s=1$ in
Corollary~\ref{lambdabarq}.

\medskip

We close this paper with the following.

\medskip

\begin{conj}\label{negbr1}{\rm Let $(\RR, \m)$ be a Noetherian   local
ring with $\dim \RR \ge 2$ and let $U\subseteq \m\RR^s$ be a parameter module of $\RR^s$~$(s> 0)$. Then
$\RR$ is a Cohen-Macaulay ring if and only if $\RR$ is
unmixed and $\br_1(U)= 0$.
}
\end{conj}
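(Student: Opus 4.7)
The easy direction is routine: if $\RR$ is Cohen--Macaulay then it is automatically unmixed, and the result recalled in the discussion immediately preceding Conjecture~\ref{negbr1} gives $\lambda(F^n/U^n) = \br(U)\binom{n+d+r-2}{d+r-1}$ for all $n\ge 0$, whence $\br_1(U)=0$. The content of the conjecture is the converse, and my plan is to reduce it to the ideal-theoretic analogue already settled in Theorem~\ref{e1zeromod}.

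First I would pass to the $\m$-adic completion $\widehat{\RR}$, since both unmixedness and the Buchsbaum--Rim polynomial are preserved under faithfully flat completion, and then enlarge the residue field so that generic superficial choices are available. The case $r=1$ is then immediate: here $U$ is a parameter ideal with $e_1(U,\RR)=\br_1(U)=0$, so Theorem~\ref{e1zeromod} gives Cohen--Macaulayness directly. For $r\ge 2$ I would proceed by induction on $r$ via a generic Bourbaki-type reduction: choose $u\in U$ whose residue class in $F/\m F$ can be completed to a basis of $F/\m F$. Then $F/\RR u\cong \RR^{r-1}$, and the image $U'$ of $U$ in $\RR^{r-1}$ is again a parameter module, now of rank $r-1$. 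Tracking how the Buchsbaum--Rim polynomial transforms under this reduction --- the module-theoretic analogue of Proposition~\ref{genhs} for superficial elements --- should yield $\br(U')=\br(U)$ and $\br_1(U')=\br_1(U)$ once $u$ is chosen sufficiently general. Iterating this reduction $r-1$ times lands in the ideal case.

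The main obstacle will be justifying the invariance of $\br_1$ through each Bourbaki reduction. Under just the hypothesis that $\RR$ is unmixed, the naive reduction may produce finite-length correction terms coming from the $\rmH_\m^0$ of auxiliary modules; I expect these corrections to be forced to vanish by combining the Hayasaka--Hyry inequality $\br_1(U')\le 0$ (Theorem~\ref{HaHy}) applied at each intermediate stage with the hypothesis $\br_1(U)=0$, via a squeezing argument parallel to the one in the proof of Theorem~\ref{e1zeromod}. Should the reduction in $r$ prove too delicate, a fallback plan is to fix $r$ and instead induct on $d=\dim\RR$. The base case $d=2$ should yield a formula of the shape $\br_1(U)=-\lambda\bigl((0):_{\rmH_\m^1(\RR)^{\,r}}h\bigr)$ for a generic superficial $h\in\m$ --- the Buchsbaum--Rim analogue of Proposition~\ref{genhs}(d) --- forcing $\rmH_\m^1(\RR)=0$ and hence $\depth \RR = 2$, which is Cohen--Macaulayness in the unmixed $2$-dimensional case. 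The inductive step then cuts by a superficial $h\in\m$, passes to $(\RR/h\RR, U/hU)$ after discarding the finite-length torsion controlled by unmixedness and Corollary~\ref{fg}, applies the inductive hypothesis, and propagates Cohen--Macaulayness back to $\RR$ exactly as in the induction carried out for Theorem~\ref{e1zeromod}.
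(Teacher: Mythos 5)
The statement you are trying to prove is stated in the paper as Conjecture~\ref{negbr1}: the authors explicitly close the paper with it as an \emph{open problem}, and they offer no proof. So there is no ``paper's proof'' to compare against; the only question is whether your proposal actually closes the conjecture, and it does not. The forward direction you dispose of is indeed known (for a Cohen--Macaulay ring the Buchsbaum--Rim function of a parameter module agrees with its polynomial for all $n\ge 0$, by \cite[Theorem 3.4]{UV}, hence $\br_1(U)=0$), but the converse is where the content lies, and your argument there is a plan resting on two unestablished pillars.

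First, the generic Bourbaki reduction step: you assert that for a general $u\in U$ the image $U'$ of $U$ in $F/\RR u\cong \RR^{r-1}$ is a parameter module with $\br(U')=\br(U)$ and $\br_1(U')=\br_1(U)$, ``once $u$ is chosen sufficiently general.'' This is exactly the missing technology --- a superficial-element calculus for Buchsbaum--Rim coefficients analogous to Proposition~\ref{genhs} --- and it is precisely what the authors identify as the obstruction to progress on this conjecture; over a merely unmixed ring the reduction produces finite-length correction terms whose vanishing is not known, and your suggestion to ``squeeze'' them away using the Hayasaka--Hyry inequality $\br_1\le 0$ presupposes sign and monotonicity statements for those corrections that you have not proved. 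Second, your fallback induction on $d$ hinges on a formula of the shape $\br_1(U)=-\lambda\bigl((0):_{\rmH_\m^1(\RR)^{\,r}}h\bigr)$ in dimension two; no such Buchsbaum--Rim analogue of Proposition~\ref{genhs}(d) exists in the paper or in the cited literature, and deriving one is essentially equivalent to the missing reduction theory. In short, both routes reduce the conjecture to unproven lemmas that carry all of its difficulty; what you have is a reasonable research outline, not a proof, and the statement remains open.
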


\medskip

\end{document}